\numberwithin{equation}{section}
\newtheorem{theorem}[equation]{Theorem}
\newtheorem{proposition}[equation]{Proposition}
\newtheorem{lemma}[equation]{Lemma}
\newtheorem{corollary}[equation]{Corollary}
\newtheorem{conjecture}[equation]{Conjecture}
\newtheorem{rem}[equation]{Remark}
\theoremstyle{definition}
\newtheorem{definition}[equation]{Definition}
\newtheorem{example}[equation]{Example}
\newtheorem{remark}[equation]{Remark}
\def\XXint#1#2#3{{\setbox0=\hbox{$#1{#2#3}{\int}$} 
	\vcenter{\hbox{$#2#3$}}\kern-.5\wd0}}
\newcommand{\N}{\mathbb N}
\newcommand{\Cl}{\mathrm {Cl}}
\newcommand{\R}{\mathbb R}
\newcommand{\Q}{\mathbb Q}
\newcommand{\G}{{\mathbb G}}
\newcommand{\Int}{\operatorname{Int}}
\newcommand{\Span}{\operatorname{span}}
\newcommand{\dd}{\operatorname{d}\hspace{-0.07cm}}
\def\eps{\epsilon}
\newcommand{\Ad}{\operatorname{Ad}}
\newcommand{\ad}{\operatorname{ad}}
\newcommand{\vol}{\operatorname{vol}}
\newcommand{\uno}{{\mathbb 1}}
\begin{document}

\title[Sets with constant  normal in Carnot groups]{
Sets with constant  normal in Carnot groups: properties and 
%pathological 
examples}
%On the possible regularities of sets with constant horizontal normal in Carnot groups}

%\author{Costante Bellettini}
%  
%
%\author{Enrico Le Donne}
\author[C. Bellettini and E. Le Donne]{Costante Bellettini and Enrico Le Donne}
\address{\textsc{Costante Bellettini}: \\
Department of Mathematics, University College London, London WC1E 6BT, United Kingdom}
\email{c.bellettini@ucl.ac.uk}
\address{\textsc{Enrico Le Donne}: \\
Dipartimento di Matematica, Universit\`a di Pisa, Largo B. Pontecorvo 5, 56127 Pisa, Italy \\
\& \\
University of Jyv\"askyl\"a, Department of Mathematics and Statistics, P.O. Box (MaD), FI-40014, Finland}
\email{enrico.ledonne@unipi.it}

%\keywords{Path isometry; embedding; Sub-Riemannian manifold; Nash Embedding Theorem; Lipschitz embedding.}

\renewcommand{\subjclassname}{%
\textup{2010} Mathematics Subject Classification}
%\subjclass[]{30L05, 53C17, 26A16}

\date{\today}

\renewcommand{\subjclassname}{%
 \textup{2010} Mathematics Subject Classification}
\subjclass[]{ 
53C17, %   Sub-Riemannian geometry
%53C60,   % Finsler spaces and generalizations 
% 53C30,  % Homogeneous manifolds
22E25, % Nilpotent and solvable Lie groups
28A75,  %  Length, area, volume, other geometric measure theory
49N60, % Regularity of solutions 
49Q15, %  Geometric measure and integration theory, integral and normal currents
53C38% Calibrations and calibrated geometries
%58C35 % Integration on manifolds; measures on manifolds
%26A16  % Lipschitz (Hlder) classes
%26B20 Integral formulas (Stokes, Gauss, Green, etc.)
%54Exx, % Spaces with richer structures 
%37L40 %Invariant measures
%58D05, %Groups of diffeomorphisms and homeomorphisms as manifolds
%22F50, %Groups as automorphisms of other structures
% 22DXX % Locally compact groups and their algebras
% 22F30. % Homogeneous spaces
%14M17. %Homogeneous spaces and generalizations (within Algebraic geometry)
% 53C30 % Homogeneous manifolds
% 58D19 % Group actions and symmetry properties
% 58C25 % Differentiable maps
}

\keywords{Constant horizontal normal, monotone direction, cone property, semigroup generated, Carnot-Lebesgue representative, Lie wedge,  free Carnot group, intrinsic rectifiable set, 
intrinsic Lipschitz graph, subRiemannian perimeter measure.}
 \thanks{
 C. B. was partially supported by the EPSRC grant EP/S005641/1 and by the National Science Foundation Grant No. DMS-1638352.
 E.L.D. was partially supported by the Academy of Finland (grant
288501
`\emph{Geometry of subRiemannian groups}' and by grant
322898
`\emph{Sub-Riemannian Geometry via Metric-geometry and Lie-group Theory}')
and by the European Research Council
 (ERC Starting Grant 713998 GeoMeG `\emph{Geometry of Metric Groups}').
}
\begin{abstract}
We analyze subsets of Carnot groups that have intrinsic constant normal, as they appear in the blowup study of sets that have finite sub-Riemannian perimeter. 
The purpose of this paper is threefold.
First, we prove
 some mild regularity and structural results		   in arbitrary Carnot groups.
 Namely, we show that for every constant-normal set in a Carnot group 
 its sub-Riemannian-Lebesgue representative is  
 regularly open,   contractible, and 
 its topological boundary coincides with the reduced boundary and with the measure-theoretic boundary.
We infer these properties from a cone property. Such a cone will be a semisubgroup with nonempty interior  that is canonically associated with the normal direction. We characterize the constant-normal sets exactly as those that are arbitrary unions of translations of such semisubgroups. 
  Second, making use of such a characterization, we provide some pathological examples in the specific case of the free-Carnot group of step 3 and rank 2.
  Namely, we  construct a
     constant normal set that, with respect to any   Riemannian metric, is not of locally finite perimeter;  we also construct an example with non-unique intrinsic blowup at some point, showing that it has
     different upper and lower sub-Riemannian density at the origin.
Third, we show that in Carnot groups of step 4 or less, every constant-normal set is intrinsically rectifiable, in the sense of Franchi, Serapioni, and Serra Cassano. 
  
\end{abstract}

 \maketitle

%\addcontentsline{toc}{section}{Contents}
\setcounter{tocdepth}{2}
\tableofcontents

%LISTA CAMBIAMENTI 

\section{Introduction}
   Subsets of Carnot group whose intrinsic  normal is constantly equal to a left-invariant vector field appear both in the development of a theory \`a la De Giorgi for sets with locally finite-perimeter in sub-Riemannian spaces \cite{fssc2, fssc, 
   MR3353698} 
  %Ambrosio_Gezzi_Magnani
    and in the obstruction results for biLipschitz embeddings into $L^1$ of non-Abelian nilpotent groups \cite{Cheeger-Kleiner2}.
  The work \cite{fssc} by Franchi, Serapioni and Serra-Cassano provides complete understanding of such sets in the case of Carnot groups with nilpotency step 2. However, in higher step the study appears to be much more challenging, due to the more complex underlying algebraic structure, and only  in the case of filiform groups we have a satisfactory understanding of sets with constant intrinsic normal, see \cite{Bellettini-LeDonne}.

    The main result of this paper, valid in \textit{arbitrary} Carnot groups, is summarized by the following claim, which provides some mild regularity information and some basic topological properties.
\begin{theorem}
\label{main:thm}
 Every constant-normal set in a Carnot group admits a representative that is regularly open, is contractible, and admits a cone property.
\end{theorem}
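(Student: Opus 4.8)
The plan is to deduce all three conclusions from a single structural statement---the cone property---so I would prove that first and obtain regular openness and contractibility as formal consequences. Throughout, I normalize the normal $\nu$ to a fixed horizontal direction using left-invariance, and write $\mathrm{Lie}(G)=V_1\oplus\cdots\oplus V_s$ for the stratification, so that $V_1$ bracket-generates the whole algebra. The object to be constructed is a canonical closed sub-semigroup $S\subseteq G$, attached to $\nu$, with nonempty interior, such that a well-chosen representative $E$ of the set satisfies $E\cdot S\subseteq E$.

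First I would pass from the constant-normal condition to monotonicity. The De Giorgi--type structure of sets with constant horizontal normal says that translating $E$ in any horizontal direction $X\in V_1$ pointing into $E$ (that is, $\langle X,\nu\rangle<0$) preserves $E$ up to a null set: $E\cdot\exp(tX)\subseteq E$ for $t\ge 0$. Let $W\subseteq\mathrm{Lie}(G)$ be the wedge of all directions along which $E$ is monotone. It contains the open horizontal half-space $\{X\in V_1:\langle X,\nu\rangle<0\}$, and it is closed under the group product and, via Baker--Campbell--Hausdorff, under iterated brackets of its elements; hence $W$ is a Lie wedge and $S:=\overline{\langle\exp W\rangle}$ is a closed sub-semigroup. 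Because $W\cap V_1$ is a codimension-one half-space it spans $V_1$, so that $W-W\supseteq V_1$, which bracket-generates $\mathrm{Lie}(G)$; the Lie-wedge/semigroup correspondence then forces $W-W=\mathrm{Lie}(G)$ and hence $S$ to be full-dimensional, with nonempty interior $S^{\circ}$.

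With $S$ in hand I would fix the representative and upgrade the almost-everywhere inclusions to an exact cone property. Take $E$ to be the sub-Riemannian Lebesgue representative, the set of density-one points. For $x\in E$ and $s\in S^{\circ}$, the a.e.\ inclusion of the previous step places a positive-measure portion of $E$ around $xs$, and a density computation gives $xs\in E$; thus $E\cdot S^{\circ}\subseteq E$. Since $e\in\overline{S^{\circ}}$ and $S^{\circ}$ is open, $E=E\cdot S^{\circ}$ is open, which is the cone property. Regular openness follows quickly: if $x\in\Int(\overline E)$ then a neighborhood of $x$ lies in $\overline E$, so $xs^{-1}$ meets $E$ for small $s\in S^{\circ}$, and applying the cone property moves $x$ back into $E$; hence $\Int(\overline E)\subseteq E$ and $E$ is regularly open. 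For contractibility I would use that $W$, being defined by the dilation-invariant normal condition, is invariant under the intrinsic dilations $\delta_\lambda$, so $S$ is a genuine cone; choosing a path $\gamma$ in $S$ from $e$ into $S^{\circ}$ and combining right-translation along $\gamma$ with the dilations $\delta_t$ (which contract $G$ to the identity) yields an explicit homotopy $H\colon E\times[0,1]\to E$ deformation-retracting $E$ to a point.

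The main obstacle is the second step: producing the semigroup and proving it is full-dimensional in \emph{arbitrary} step. Deriving monotonicity in a single horizontal direction is standard, but assembling the monotone directions into a Lie wedge whose generated semigroup has nonempty interior requires controlling how the horizontal half-space propagates through iterated brackets into every layer $V_2,\dots,V_s$---exactly the algebraic complexity that makes high-step Carnot groups hard. The remaining delicate point is analytic rather than algebraic: upgrading the a.e.\ monotonicity to the pointwise inclusion $E\cdot S^{\circ}\subseteq E$ for the Lebesgue representative, which must be carried out with sub-Riemannian density estimates rather than Euclidean ones.
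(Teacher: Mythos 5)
Your overall architecture is the same as the paper's: generate a sub-semigroup $S$ from the horizontal half-space, show it has nonempty interior because the half-space Lie-generates the algebra (this is the Orbit-Theorem argument of Proposition~\ref{semigroup:has:interior}), pass to a Lebesgue representative with an exact cone property, and then deduce regular openness (by the same ``reach $x$ from a point of $xC^{-1}\cap E$'' trick) and contractibility (dilations combined with translation into the cone). One side claim is false, though not load-bearing: the wedge of all monotone directions is \emph{not} closed under iterated brackets via Baker--Campbell--Hausdorff --- only \emph{invariant} directions form a subalgebra, and the failure of monotone directions to bracket into monotone directions is precisely the source of the pathologies in $\mathbb F_{23}$. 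You do not actually need that claim, since the closure of a generated semigroup is a semigroup in any case.

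The genuine gap is in the central analytic step. You take $E$ to be the density-one representative for the \emph{left-invariant} Carnot distance and assert that for $x\in E$ and $s\in S^{\circ}$ ``a density computation gives $xs\in E$.'' With left-invariant balls this computation does not go through: the a.e.\ inclusion $E\cdot s\subseteq E$ transports $B_\rho(x,r)\cap E$ onto $\bigl(B_\rho(x,r)\cap E\bigr)s$, and right translation by $s$ is \emph{not} bi-Lipschitz for a Carnot distance at small scales (conjugation by $s$ pushes horizontal increments into higher layers, so $B_\rho(x,r)s$ is not comparable to $B_\rho(xs,Cr)$ as $r\to0$). The paper resolves this with \emph{two} representatives: first the Lebesgue representative for a \emph{right-invariant} doubling distance, for which right translations are isometries and the exact inclusion $\tilde E^\rho\exp(tX)\subseteq\tilde E^\rho$ holds pointwise (Proposition~\ref{monotone_direction}); only then does it pass to the Carnot--Lebesgue representative, via the contrapositive argument of Lemma~\ref{lem:CL:open}: if $p\notin\Int(\tilde E)$ then $E\cap pC^{-1}=\emptyset$, so the density of $E$ at $p$ is bounded away from $1$ and $p\notin\tilde E$. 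That same lemma is what yields openness; your step ``$E=E\cdot S^{\circ}$, hence $E$ is open'' is unjustified, because $e\in\overline{S^{\circ}}$ only gives $x\in\overline{xS^{\circ}}$, not $x\in E\cdot S^{\circ}$. Supplying the right-invariant intermediate representative and the density-deficit argument at non-interior points is exactly what is missing from your sketch (and is the content of Theorem~\ref{main:thm2}); once those are in place, your deductions of regular openness and contractibility coincide with the paper's.
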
   
   
 By means of explicit examples, we will see that further regularity properties (which are for example valid in filiform groups) may fail to hold in some Carnot groups. For example, a constant-normal set is not necessarily a set of locally finite perimeter in the Euclidean sense: we will illustrate this in the case of the free Carnot group of step 3 and rank 2, see Section~\ref{sec:controesempio1}.

A more precise (and stronger) version of Theorem~\ref{main:thm} is given in Theorem~\ref{main:thm2}. As immediate consequences of it, we will obtain that every constant-normal set in a Carnot group admits a representative whose topological boundary coincides with the measure-theoretic boundary, with the De Giorgi's reduced boundary, and with the support of its perimeter measure, see Proposition~\ref{remark:normalratioconstant}. Moreover, we will deduce that the density ratio (independently of the radius) at boundary points is pinched between constants that only depend on the group, see Proposition~\ref{prop:densities}. 

Working again in the free Carnot group of step 3 and rank 2, we will provide an example of constant-normal set for which there exist boundary points at which the (intrinsic) lower density is \textit{strictly smaller} than the upper density: this implies in particular that the (subRiemannian) tangent cone at such points is \textit{not unique}, see Section~\ref{sec:controesempio2}. It is natural to ask how large the set of such boundary points can be. We answer this question in Section~\ref{rectifiabilityStep4} as a consequence of the following result: in $\mathbb F_{23}$ (or, more generally, in any other Carnot group of step at most 4) constant normal sets are \textit{intrinsically rectifiable} in the sense of Franchi-Serapioni-Serra Cassano. Indeed, we show that
we only have half-spaces as blowups at almost every point with respect to the sub-Riemannian perimeter measure (so the density is $1/2$ at almost every point).

% {\it SCRIVERE che il bordo topologico coincide con il measure theoretic boundary e forse anche col supporto del gradiente orizzontale della caratteristica.}
 
We now explain the terminology and present the result in more detail.
  Let $\G $ be a Carnot group;
  we refer to \cite{LeDonne:Carnot} for an introduction to Carnot groups and their sub-Riemannian geometry.
   We denote by $V_1$ the first layer of its stratification and by $\delta_t$ the group automorphism induced by the multiplication by $t$ on $V_1$, which is a dilation by $t$ with respect to the Carnot distance. Elements of $V_1$ are seen as left-invariant vector fields. 
  
    Let $E\subseteq \G$ be measurable set and $X $ a left-invariant vector field on $\G$. We say that $E$ is {\em $X$-monotone} if 
\begin{equation} X\uno_{E}\geq 0, \end{equation} 
in the sense of distributions, see \eqref{def:Xuno} and Remark~\ref{remark:positivedistr}. %that $X\uno_{E}$ is a non-negative distribution. 
  We say that $E$ is  {\em precisely $X$-monotone} if  
  \begin{equation}   E \exp(tX)\subseteq     E,\qquad \forall t>0.
  \end{equation} 
We say $E$ is a   {\em constant-normal set} (resp.,  {\em precisely constant-normal set}) if there exists a closed half-space $W$ in $V_1$ such that for all $X\in W$ the set $E$ is $X$-monotone (resp., precisely $X$-monotone). 
  After fixing a scalar product $\langle \cdot,\cdot\rangle$ on $V_1$, if $X\in V_1$ is such that $W= \{ \langle \cdot,X\rangle \geq0 \}$ and $ \langle X,X\rangle=1$, we say that $X$ is the  {\em constant normal}, resp. {\em precisely constant normal} of $E$.
%\footnote{The part in color could be removed. Or we can introduce the more useful terminology of constant normal w.r.t. $W$}

%Using $\Int$  for the interior and  $\Cl$ for the closure, we say that a 
%  subset $C\subset \G$   is   {\em regularly open}  if
%$ C = Int ( \Cl (C) $; we say that a 
%  subset $C$ is {\em almost regular}\footnote{E: this last terminology is not standard. } if 
%$ C \subseteq \Cl (Int (C)).$

We say that a subset $C\subset \G$ of a Carnot group $\G$   is  a {\em cone} if it is non-empty and
\begin{equation}
\label{cone_def}\delta_t(C) = C, \qquad \forall t>0.
\end{equation}
If $E\subset \G$  is measurable  and $C\subset \G$ is a cone, we say that $E$ has the $C$-{\em cone property} if
\begin{equation}
\label{cone_property}
   E \cdot C \subseteq     E.
\end{equation}

While it is obvious that every precisely constant-normal set has constant normal (see Remark~\ref{rmk:constant:constant_precise}), it is not immediate that every constant-normal set admits a   representative that  has precisely constant normal (see Remark~\ref{rmk:constant_precise}).

In order to explain Theorem~\ref{main:thm} we describe the construction of the representative and of the cone. We shall see the theorem as a consequence of  
the fact that precisely constant sets normal have a cone property and then prove that every set with this cone property has  a representative as in Theorem~\ref{main:thm}.

The Carnot group $\G$ is equipped with a Carnot distance, i.e., a sub-Riemannian left-invariant distance where the tangent subbundle is induced by the first layer of the stratification. See  \cite{LeDonne:Carnot} for the notion of  stratification.
 Equipped with such a Carnot distance and with any Haar measure, the Carnot group $\G$ becomes   a doubling metric measure space. Hence the Lebesgue-Besicovitch Differentiation Theorem holds. Consequently, to every measurable set $E$ we associate the  {\em Carnot-Lebesgue  representative} of  
$E$, that is, the set of points of (intrinsic) density 1 for $E$.
% {\color{cyan} denote it by $  \tilde E ^L$, which is the set of points of density 1. The suffix $L$ is there to recall us that we are using a left-invariant metric, since also auxiliary  right-invariant metrics will be used.}\footnote{The part in color could be removed.}

If $W$ is a closed half-space in $V_1$, we consider
% $C=C_W$ to be the interior of 
 the semigroup $S_W$ 
generated by $\exp(W)$.
% and  consider $C$ the interior of $S$. 
It is crucial that $S_W$ is a cone that is contractible and whose interior is not empty. This last  fact 
%that  the semigroup has nonempty interior 
is a consequence of the Orbit Theorem since the set $W$ Lie generates the Lie algebra.
%If $E$ has   constant-normal   (resp.,  {\em precisely constant-normal set}) if there exists a closed half-space $W$ in $V_1$

 The cone in the next result is the interior $C_W$ of 
 the semigroup $S_W$ given by the $W$ from the definition of the constant normal.
We do not need to change representative. %, as we shall prove in Section~\ref{}.
\begin{theorem}
\label{thm1}
Every precisely constant-normal set has the cone property  with respect to an open   contractible  cone.
\end{theorem}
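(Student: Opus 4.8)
The plan is to take $C_W := \operatorname{int}(S_W)$, where $S_W$ is the semigroup generated by $\exp(W)$ and $W$ is the closed half-space of $V_1$ associated with the precisely constant normal of $E$, and to verify separately that $C_W$ is an open contractible cone and that $E\cdot C_W\subseteq E$. First I would record the elementary but essential structural facts. Since $\delta_t$ acts on $V_1$ as multiplication by $t$ and $W$ is a half-space through the origin, one has $\delta_t(\exp(W))=\exp(tW)=\exp(W)$ for every $t>0$; as $\delta_t$ is a group automorphism it maps the semigroup generated by $\exp(W)$ onto itself, so $\delta_t(S_W)=S_W$ and $S_W$ is a cone. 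Because $\delta_t$ is a homeomorphism, its interior $C_W$ is a cone as well, and $C_W$ is open by construction. The only non-elementary input here is that $C_W\neq\emptyset$: this is exactly where the Orbit Theorem enters, since a closed half-space of $V_1$ spans $V_1$ and $V_1$ bracket-generates the Lie algebra, so $W$ Lie-generates and the attainable set $S_W$ has nonempty interior.

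For the cone property I would argue purely set-theoretically, consistently with the observation that no change of representative is needed. By the definition of precise $X$-monotonicity, for every $X\in W$ and every $t>0$ we have $E\exp(tX)\subseteq E$; taking $t=1$ gives $E\exp(X)\subseteq E$, that is, $E\cdot\exp(W)\subseteq E$. A generic element of $S_W$ is a finite product $\exp(X_1)\cdots\exp(X_n)$ with $X_1,\dots,X_n\in W$, so, writing $E\cdot(g_1g_2)=(E\cdot g_1)\cdot g_2$ and applying the inclusion $E\cdot\exp(X)\subseteq E$ from the innermost factor outward, a straightforward induction on $n$ yields $E\cdot S_W\subseteq E$. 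Since $C_W\subseteq S_W$, this gives $E\cdot C_W\subseteq E$, which is the desired $C_W$-cone property.

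It remains to see that $C_W$ is contractible. Here I would first note that $C_W$ is an open subsemigroup: for $g\in S_W$ the right translation $x\mapsto xg$ is a homeomorphism, so $C_W\cdot g$ is open and contained in $S_W\cdot g\subseteq S_W$, whence $C_W\cdot g\subseteq\operatorname{int}(S_W)=C_W$, and in particular $C_W\cdot C_W\subseteq C_W$. Fixing any basepoint $p\in C_W$, the map
\[
H(g,s)=\delta_{1-s}(g)\cdot\delta_s(p),\qquad (g,s)\in C_W\times[0,1],
\]
is then a contraction of $C_W$ onto $p$: for $s\in(0,1)$ both factors lie in the cone $C_W$ and their product stays in $C_W$ by the subsemigroup property, while $H(\cdot,0)=\id$ and $H(\cdot,1)\equiv p$ follow from $\delta_0\equiv e$ and $\delta_1=\id$, joint continuity being clear from the explicit form of the dilations in exponential coordinates. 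The main obstacle in the whole argument is therefore not the cone-property inclusion, which is a formal consequence of the definition, but securing $C_W\neq\emptyset$ together with its cone and semigroup structure; once the Orbit Theorem supplies the nonempty interior, the contraction $H$ and the inductive inclusion complete the proof.
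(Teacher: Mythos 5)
Your proof is correct and follows essentially the same route as the paper: reduce precise monotonicity to the $S_W$-cone property by factoring elements of the semigroup (the paper's Proposition~2.25), invoke the Orbit-Theorem/attainable-set argument for $\Int(S_W)\neq\emptyset$ (Proposition~2.23 and Corollary~2.24), and contract $\Int(S_W)$ using dilations together with the semigroup property (Lemma~2.21). Your single-formula homotopy $H(g,s)=\delta_{1-s}(g)\cdot\delta_s(p)$ is just a compact repackaging of the paper's two-stage path through $pq$, and all the verifications you give are sound.
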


Regarding the next result we recall that a set is regularly open if it equals   the interior of its closure. 
\begin{theorem} 
\label{thm2}
If a set has the  cone property  with respect to an open  
%{\color{cyan}connected}\footnote{E: si puo' omettere. Discutiamone.}   
cone, then its  Carnot-Lebesgue  representative   is regularly open, contractible,  and with contractible complement.
 \end{theorem}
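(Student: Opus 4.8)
\emph{Reduction to an open set.} The plan is to prove that the Carnot--Lebesgue representative $E^{*}$ (the set of density-$1$ points) coincides with the open set $A:=E\cdot C$, after which regular openness and contractibility become statements about $A$. Since $C$ is open and left translations are homeomorphisms, $A=\bigcup_{x\in E}xC$ is open, and the cone property gives $A\subseteq E$; hence each point of $A$ has a whole neighborhood $xC\subseteq E$ and so belongs to $E^{*}$. For the converse I would use that an open cone has strictly positive lower density at its vertex: picking a ball $B(p,\rho)\subseteq C^{-1}$ and dilating, $\delta_t(B(p,\rho))=B(\delta_t p,t\rho)\subseteq C^{-1}$ produces balls inside $C^{-1}$ accumulating at the identity, so $\abs{xC^{-1}\cap B(x,r)}\ge\kappa\,\abs{B(x,r)}$ along some $r\to0$ with $\kappa>0$. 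If $x\in E^{*}$, comparing with $\abs{B(x,r)\setminus E}=o(\abs{B(x,r)})$ yields a point $y\in E\cap xC^{-1}$, i.e. $y^{-1}x\in C$, so $x=y\,(y^{-1}x)\in A$. Thus $E^{*}=A$. The same argument applied to $\G\setminus E$ and the open cone $C^{-1}$ (note $(\G\setminus E)\cdot C^{-1}\subseteq\G\setminus E$ follows from $E\cdot C\subseteq E$) shows that the density-$0$ set $E^{-}:=(\G\setminus E)\cdot C^{-1}$ is open; by Lebesgue--Besicovitch the leftover $\Sigma:=\G\setminus(E^{*}\cup E^{-})$ is null.

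\emph{Regular openness and reduction of the complement.} Since $A$ is open, $A\subseteq\Int\ol A$. Conversely $\partial A\subseteq\Sigma$ (a boundary point of $A$ cannot sit in the open set $E^{-}$), so for $x\in\Int\ol A$ a small ball $B(x,r)$ misses $E^{-}$, whence $B(x,r)\subseteq E^{*}\cup\Sigma$ and, as $\abs{\Sigma}=0$, the point $x$ has density $1$ and lies in $E^{*}=A$. Therefore $A=\Int\ol A$. For the complement, $\G\setminus A=\ol{E^{-}}$; fixing a horizontal $v$ with $\exp(\R_{>0}v)\subseteq C$, the homotopy $(x,s)\mapsto x\exp(-sv)$ sends $\ol{E^{-}}$ into $E^{-}$ for every $s>0$ (the open neighborhood $x\exp(-sv)C$ of $x$ meets $\G\setminus E$, so $x\exp(-sv)\in(\G\setminus E)\cdot C^{-1}=E^{-}$), exhibiting $E^{-}\hookrightarrow\ol{E^{-}}$ as a homotopy equivalence. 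Both remaining claims thus reduce to a single one: \emph{every open set $A$ with $A\cdot C\subseteq A$ is contractible.}

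\emph{Contractibility.} The main tool is an absorbing property: for $v$ as above and any fixed $g\in\G$, one has $g\exp(tv)\in C$ for all large $t$, because $\delta_{1/t}\big(g\exp(tv)\big)=\delta_{1/t}(g)\,\exp(v)\to\exp(v)\in C$ and $C$ is open and $\delta$-invariant. From this, the translated cones $\{xC:xC\subseteq A\}$ form an open cover of $A$ (each $a\in A=E\cdot C$ lies in some $xC\subseteq A$ with $x\in E$), and any finitely many of them have nonempty common intersection: $x_1\exp(tv)\in\bigcap_i x_iC$ once $t$ is large, since $x_i^{-1}x_1\exp(tv)\in C$. Each finite intersection $\bigcap_i x_iC$ is again open and satisfies the cone property. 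I would then argue that this is a \emph{good} cover and invoke the nerve lemma: since the total intersection is nonempty, the nerve of any finite subfamily is a full simplex, hence contractible, so each finite subunion is contractible; because every compact subset of $A$ --- in particular the image of any map $S^{k}\to A$ --- lies in such a finite subunion, all homotopy groups of $A$ vanish. As $A$ is an open subset of a manifold, Whitehead's theorem then promotes weak contractibility to contractibility.

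\emph{Main obstacle.} The crux is the verification that the cone-translates form a good cover, i.e. that each finite intersection $\bigcap_i x_iC$ is contractible and not merely nonempty and cone-monotone. Here the abstract cone property is insufficient and one must use the structure of $C=C_W$ as the interior of the Lie semigroup $S_W$: its own contractibility and the causal order it induces. Producing a contraction of these intersections, coherently enough to feed the nerve argument, is the genuinely hard step and the reason the statement resists a purely soft proof.
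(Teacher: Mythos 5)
Your treatment of the first two assertions is sound and runs parallel to the paper's: your identification of the Carnot--Lebesgue representative with the open set $E\cdot C$ is essentially Lemma~\ref{lem:CL:open} (the paper shows directly that any $p\notin\Int(\tilde E)$ has density $<1$, using that $E\cap pC^{-1}=\emptyset$ and that $C^{-1}$ occupies a fixed positive fraction of every ball centered at $p$), and your density argument for $\Int(\Cl(A))\subseteq A$ is a workable variant of Lemma~\ref{lem:reg:open}. The genuine gap is in the contractibility step, and you have named it yourself: your nerve-lemma strategy requires the cover by translated cones $\{xC\}$ to be a \emph{good} cover, i.e.\ that every nonempty finite intersection $\bigcap_i x_iC$ be contractible, and you do not prove this. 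For an arbitrary open cone $C$ --- which is all the theorem hypothesizes --- the translates $x_iC$ need not even individually be known to be contractible a priori, and their intersections need not inherit any cone property, since nothing guarantees $C\cdot C\subseteq C$. So as written the argument does not close.

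The paper avoids all of this with an explicit contraction (Proposition~\ref{prop:contractible}) that works for \emph{any} set $E$ with the $C$-cone property and $\Int(C)\neq\emptyset$: after translating so that $1_\G\in E$ and dilating so that a closed unit ball $\bar B_\rho(q_0,1)\subseteq C$ for a \emph{right}-invariant distance $\rho$, one sets $\phi(g,t)=g\,\delta_t\bigl(g^{-1}\delta_{|g|}(q_0)\bigr)$ for $t\in[0,1]$ and $\phi(g,t)=\delta_{2-t}\bigl(\delta_{|g|}(q_0)\bigr)$ for $t\in[1,2]$, where $|g|=\rho(1_\G,g)$. Right-invariance gives $\delta_{|g|^{-1}}\bigl(g^{-1}\delta_{|g|}(q_0)\bigr)\in \bar B_\rho(q_0,1)\subseteq C$, so the first stage stays in $gC\subseteq E$, and the second stays in $C\subseteq E$ because $C$ is a cone and $1_\G\in E$. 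Applied to $\tilde E$ and, via Remark~\ref{cone_prop_complement}, to its complement with the open cone $C^{-1}$, this finishes the proof with no nerve theorem, no Whitehead argument, and no semigroup structure on $C$. If you wanted to salvage your route you would in any case end up proving exactly this proposition in order to handle the finite intersections, at which point the covering machinery becomes superfluous.
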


In order to prove Theorem~\ref{main:thm}, in Theorem~\ref{main:thm2} we shall prove that the Carnot-Lebesgue representative of a set with constant normal $W$ has the cone property with respect to the closure of $S_W$. 
% The representative with the above properties is the Lebesgue representative with respect to the Haar measure $\vol$ and to a left-invariant subRiemannian distance $\rho_L$. The statement in parenthesis follows by passing to the complement (needed?).

Theorem~\ref{thm1} has  an important counterpart. In fact, every set that has the cone property with respect to  $S_W$ has  
precisely constant-normal.
This observation let us construct precisely constant-normal sets (all of them) as union of arbitrary  translations of the semigroup:
%\footnote{E: I'm not sure if we should put $\bar C_W$, $ C_W$, or $S_W$, or an arbitrary precisely constant $W$-normal set.}: 
for every set $\Sigma\subset \G$ 
and for every family $(E_p)_{p\in \Sigma}$ of sets that have precisely constant-normal given by the half-space $W$, we have that 
the set 
$$E:= \bigcup_{p\in \Sigma} p E_p $$
has precisely constant-normal given by the half-space $W$; see Proposition~\ref{monotone_semigroup}.
For example, one can take as $E_p$ either of the sets  $ C_W$,  $S_W$ or their closures.

For the reason that the semigroup   $S_W$ play such an important role in the theory of constant normal sets, it becomes fundamental to understand its geometry.
An unexpected result is that  the semigroup   generated by an open (or closed)  half-space $W$ in the horizontal space is not   open nor  closed in the whole group $\G$, in general. And moreover, the subset $\exp(W)$ may all be contained in the topological boundary of the semigroup.

The simplest Carnot group where one sees these pathologies is the Carnot group whose Lie algebra is  free-nilpotent of rank 2 and step 3. We call such a Carnot group $\mathbb F_{23}$ and we denote by $X_1$ and $X_2$ the generating horizontal left-invariant vector fields, which we can make orthogonal by a suitable choice of scalar product. Being a free-nilpotent group, any two horizontal left-invariant vector fields differ by a Carnot automorphism. Hence, there is no loss of generality in studying only the sets that have constant normal $X_2$.
We can completely characterize the semigroup $S$ generated by 
$$\exp(W):=\{\exp(aX_1+b X_2): \quad a\in \R, b\geq 0\}.$$ 
We shall see that $\exp(W)$ is contained in the boundary of $S$. Also, inside the set $S$ there is no  open subset that is an Euclidean cone in exponential coordinates. From this fact we are able to construct examples of precisely constant normal sets that do not have  locally finite Riemannian perimeter, see Theorem~\ref{thm:no_BV}. 
We also construct an example with different (intrinsic) upper and lower density at some point; see Theorem~\ref{prop:non_unique_tangents}

%The last positive result that we prove is that in $\mathbb F_{23}$, and in any other Carnot group of step at most 4, constant normal sets are intrinsically rectifiable in the sense of Franchi-Serapioni-SerraCassano. Indeed, we show that
%we only have half-spaces as  blowups at
 % almost every point with respect to the sub-Riemannian perimeter measure.
%   the blow up is a half-space.

% \vspace{1cm}
\subsubsection*{Acknowledgements} The authors are grateful to Sebastiano Don and Terhi Moisala for their constructive feedback on previous versions of this paper.
Part of this work was carried out while the first-named author was member of the Institute for Advanced Study (Princeton) in 2013 and 2019.
%was member of the Institute for Advanced Study (Princeton): he
Both authors gratefully acknowledge the excellent research environment and hospitality.
%Say something about construction of constant-normal sets as arbitrary unions of semigroups.

\section{Sets with constant   normal, representatives, and cone properties}

The general argument that we will pursue %to prove Theorem~\ref{} 
is to consider the semigroup $S=S_W$ generated by an open (or closed)  half-space $W$ in the horizontal layer. The subset $S$ will   be neither open nor  closed, in general. However, the set $S$ has nonempty interior and is a cone (see Proposition~\ref{semigroup:has:interior} and Corollary~\ref{semigroup_Canot}). Consequently, we have that $S$  and $C:=\Int(S)$ are almost regular cones (see Definition~\ref{def:regular} and Lemma~\ref{lem:almost regular}); in addition, we have that the closure $\bar S$ equals  the closure $\bar C$ and is regularly closed (see Definition~\ref{def:regular}).

We shall then consider  a set $E$ that has constant normal with respect to $W$. We first prove that $E$ has a Lebesgue representative that has the $S$-cone property (see Corollary~\ref{monotone_subgroup_Carnot}). Secondly, we prove that the Lebesgue representative of $E$ with respect to the Carnot distance is open and has the  $\bar S$-cone property (see Lemmas~\ref{lem:CL:open} and \ref{lem:closure_cone_prop}).

\subsection{Some topological notions}
Here is some topological terminology that we will use.
We shall say that a subset {\em has interior} if its interior is not empty. Fixed a 
topological space $G$, for example 
a Lie group,
 we shall denote by $\Cl(E)$ and  $\Int(E)$ the closure and the interior, respectively, of a subset $E\subseteq G$ within the topological space $G$.  We may also write $\bar E$ for $\Cl(E)$.
%denotes the interior part in $\G$.

\begin{definition}\label{def:regular}
Let   $C$ be a subset of a topological space $G$. 
The subset $C$   is said to be {\em regularly open}  if
$ C = \Int ( \Cl (C)) $.  
It is said to be {\em regularly closed} if $C = \Cl (\Int (C)).$
The subset $C$   is {\em almost regular}  if 
\begin{equation}\label{def:almost regular} C \subseteq \Cl (\Int (C));\end{equation}
equivalently, if $ \Cl(C) = \Cl (\Int (C)).$ {The terminology `almost regular' is not standard. }
\end{definition}
\begin{remark}\label{remark:regularly}
\hspace{-0.1cm}{\bf a.} If $C$ is almost regular then its closure is regularly closed, while the opposite   is not true.
Indeed, if $C$ is almost regular then 
$$\bar C  = \Cl (\Int (C)) \subseteq  \Cl (\Int (\bar C)) \subseteq \Cl  (\bar C) = \bar C.$$
Vice versa, a counterexample is given by $\Q\cap [0,1]$ as subset of $\R$.
% It is the same of asking that $ \Cl(C)$ is  regularly closed?

\ref{remark:regularly}.b. If $C$ is almost regular then also $\Int(C$) is almost regular.

%In this section, the space $\G$ will always be a Carnot group equipped with dilations $\delta_t$. 
%The term {\em cone} will be used for every non-empty subset $C\subset \G$ with the property that 
%$$\delta_t(C) = C, \qquad \forall t>0.$$

\ref{remark:regularly}.c. Every almost regular cone has interior. Indeed, if the interior of such a set would be empty, then the almost-regular assumption would imply that the set is empty. However, by definition
  cones are assumed nonempty. 
\end{remark}
\subsection{Lebesgue representatives} Let $G$ be a Lie group, not necessarily a Carnot group,  endowed with a left- and right-invariant Haar measure $\vol$, which is fixed along the conversation.
%  Let $E\subseteq \G$ be a .
  Given a distance $\rho$ on $G$ inducing the topology and a
  measurable set $E\subseteq G$, we denote by 
    $\tilde E^\rho$   the Lebesgue $\rho$-representative of $E$ with respect to    $\rho$ (and $\vol$). Namely,  %, i.e. the set of points having density $1$:
\begin{equation}
\label{def:tilde:E}
x \in \tilde E^\rho
  \quad
  \Leftrightarrow
  \quad 
  \lim_{r \to 0^+} \frac{\vol(B_\rho(x,r) \cap E)}{\vol(B_\rho(x,r))} =1,
\end{equation}
where $B_\rho(x,r):=\{y\in G \mid \rho(x,y)< r\}$.
%{\color{red}clarify that the Lebesgue representative does not depend on the choice of the volume form $\vol$ nor on the Riemannian metric $\rho$. In particular, if one considers a left-invariant Riemannian metric, then it is clear that 
%%the Lebesgue representative of 
%\begin{equation}
%\label{left-inv-Lebes}
%\tilde {gE }= g\tilde E.
%\end{equation}
%}
We stress that,   even in   Euclidean spaces, the set $\tilde E$  is generally neither  open nor is closed.

When the distance $\rho$ is sub-Riemannian, then the metric measure space $(G, \rho, \vol)$ is locally doubling. Every time such a space is locally doubling, 
 the Lebesgue-Besicovitch Differentiation Theorem holds and in particular the sets $\tilde E^\rho$ and $E$ agree up to a $\vol$-negligible set.
Namely, we have that $\tilde E^\rho$ is a representative of $E$, where we shall say that $E'$ is a {\em representative} of $E$ if  
$\vol(E\setminus E')=\vol( E'\setminus E)=0$  
 
\subsection{Monotone directions}
 In this subsection we discuss equivalent definition of monotone directions and show that if $E$ is $X$-monotone, then there is a representative 
 of $E$ that is precisely  $X$-monotone.
 In the discussion, we fix a Lie group $G$ and a left-invariant vector field $X$ on $G$.
 %and denote by $\mathfrak g$ its Lie algebra of left-invariant vector fields.

Recall that a distribution $T$ on $C^\infty_c(G)$ is said to be {\em non-negative}, and we write $T\geq 0$, if
for all test functions $u$ such that $u\geq0$ we have $Tu\geq 0 $. In particular, 
%for $X\in \mathfrak g$ 
we say that, in the sense of distributions,  
$X\uno_{  E}\geq 0$ if for all $u\in C^\infty_c(G)$ with $u\geq 0$, one has that \begin{equation}
\label{def:Xuno}
\langle X \uno_{E}, u\rangle := -\int_{\tilde E} Xu \dd\vol \geq 0.\end{equation}

\begin{remark}
 \label{remark:positivedistr}
It is a general fact that non-negative distributions are Radon measures, see \cite[Theorem 1.39]{Evans_Gariepy_revised}. Here is the short argument with our notation. For $T\geq 0$ on $G$ and $K\subset G$ compact, fix a function $U_K \in C_c^\infty(G)$ such that $U_K=1$ on $K$ and $U_K\geq 0$. Let $u \in C_c^\infty(G)$ with $\text{spt}(u) \subset K$ and $C:=\|u\|_{L^\infty}$. Then $-C U_K \leq u \leq CU_K$, in other words $C U_K-u$ and $u+C U_K$ are non-negative functions in $C_c^\infty(G)$: by the assumptions $T(C U_K-u)\geq 0$ and $T(u+C U_K)\geq 0$ and the linearity of $T$ gives $C T(U_K)\geq T(u)$ and $T(u)\geq -C T(U_K)$. These inequalities yield a constant $A_K =T(U_K) \geq 0$ (depending only on $K$) such that $|T(u)|\leq A_K \|u\|_{L^\infty}$. The distribution $T$ extends therefore to a bounded linear functional on $C_c^0(G)$ and it is represented by a non-negative Radon measure by Riesz' theorem (the non-negativity is preserved in the extension).
\end{remark}

With next result we give equivalent characterizations of the property that a set $E\subseteq G$ is {\em $X$-monotone}. It is important to recall that in every Lie group $G$   the flow $\Phi^t_X$ at time $t$ with respect to a  left-invariant vector field $X$ is the right translation:
\begin{equation}
\Phi^t_X(p) = p\exp(tX),\qquad \forall t\in \R, \forall p\in G.
\end{equation}
Initially, to better work with such right translations, we shall consider Lebesgue representative for some {\it right}-invariant distance.
In the next result, as an example of  right-invariant doubling distance  $\rho$ one may consider any right-invariant Riemannian distance.

\begin{proposition}\label{monotone_direction} Let $E\subseteq G$ be a measurable set of a Lie group $G$, $X$  a left-invariant vector field on $G$, and $\rho$ a right-invariant doubling distance for $\vol$.
 The following are equivalent:
\begin{itemize}
\item[(\ref{monotone_direction}.0)] in the sense of distributions, $X\uno_{E}$ is a Radon measure and  $X\uno_{E}\geq 0;$
\item[(\ref{monotone_direction}.1)] in the sense of distributions, $$X\uno_{E}\geq 0;$$
\item[(\ref{monotone_direction}.2)] for all $t>0$  we have that  $$  \uno_{E\exp(tX)}\leq  \uno_E,\qquad \text{almost everywhere};$$ 
\item[(\ref{monotone_direction}.3)]  the representative $  \tilde E ^\rho$, as defined in \eqref{def:tilde:E}, satisfies 
 the set inclusion  $$  \tilde E^\rho \exp(tX)\subseteq   \tilde E^\rho,\qquad\forall t>0;$$ 
\item[(\ref{monotone_direction}.4)] there exists a representative $E'$ of $E$  that satisfies 
 the set inclusion  
%for all $t>0$ we have the inclusion 
 $$    E' \exp(tX)\subseteq     E',\qquad\forall t>0.$$ 
\end{itemize}
\end{proposition}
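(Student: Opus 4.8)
The plan is to establish the cycle of implications
$(\ref{monotone_direction}.0)\Leftrightarrow(\ref{monotone_direction}.1)\Leftrightarrow(\ref{monotone_direction}.2)\Rightarrow(\ref{monotone_direction}.3)\Rightarrow(\ref{monotone_direction}.4)\Rightarrow(\ref{monotone_direction}.2)$,
treating the almost-everywhere inclusion $(\ref{monotone_direction}.2)$ as the hub that links the distributional conditions $(\ref{monotone_direction}.0)$, $(\ref{monotone_direction}.1)$ to the geometric ones $(\ref{monotone_direction}.3)$, $(\ref{monotone_direction}.4)$. The equivalence $(\ref{monotone_direction}.0)\Leftrightarrow(\ref{monotone_direction}.1)$ is immediate: one direction is trivial, and the other is precisely the content of Remark~\ref{remark:positivedistr}, which guarantees that a non-negative distribution is represented by a non-negative Radon measure.

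The analytic heart is $(\ref{monotone_direction}.1)\Leftrightarrow(\ref{monotone_direction}.2)$. For a fixed test function $u\in C_c^\infty(G)$ with $u\geq 0$, I would introduce the auxiliary function
$$ f(t):=\int_G \uno_E(q\exp(tX))\,u(q)\,\dd\vol(q). $$
Using the right-invariance of $\vol$ to substitute $q\mapsto q\exp(tX)$, this rewrites as $f(t)=\int_E u(q\exp(-tX))\,\dd\vol(q)$, on which one may differentiate under the integral sign since $u$ is smooth with compact support. The one-parameter-subgroup identity $\exp(sX)\exp(-tX)=\exp((s-t)X)$ yields $\frac{d}{dt}u(q\exp(-tX))=-(Xw_t)(q)$, where $w_t(q):=u(q\exp(-tX))\geq 0$, so that $f'(t)=\langle X\uno_E,w_t\rangle$ by the very definition \eqref{def:Xuno}. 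Hence $(\ref{monotone_direction}.1)$ forces $f$ to be non-decreasing, and reading off $f(t)\geq f(0)$ for all $t>0$ and all such $u$ gives $\uno_{E\exp(-tX)}\geq \uno_E$ almost everywhere, which after right translation by $\exp(tX)$ is exactly $(\ref{monotone_direction}.2)$. Conversely, $(\ref{monotone_direction}.2)$ makes $f$ non-decreasing directly from the set inclusions, and since $f$ is $C^1$ its derivative at the origin, namely $\langle X\uno_E,u\rangle$, is non-negative, giving $(\ref{monotone_direction}.1)$.

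For the geometric equivalences I would exploit the right-invariance of both $\rho$ and $\vol$, which gives the translation identity $B_\rho(x\exp(tX),r)=B_\rho(x,r)\exp(tX)$ and preserves the measure. To prove $(\ref{monotone_direction}.2)\Rightarrow(\ref{monotone_direction}.3)$, fix $x\in\tilde E^\rho$ and $t>0$; translating balls and using $\vol\big(B_\rho(x,r)\exp(tX)\cap E\big)=\vol\big(B_\rho(x,r)\cap E\exp(-tX)\big)$ together with the a.e.\ inclusion $E\subseteq E\exp(-tX)$ (the equivalent form of $(\ref{monotone_direction}.2)$) shows that the density ratio of $E$ at $x\exp(tX)$ is bounded below by the density ratio of $E$ at $x$, hence tends to $1$; thus $x\exp(tX)\in\tilde E^\rho$. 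The implication $(\ref{monotone_direction}.3)\Rightarrow(\ref{monotone_direction}.4)$ is trivial, since $\tilde E^\rho$ is itself a representative of $E$, and $(\ref{monotone_direction}.4)\Rightarrow(\ref{monotone_direction}.2)$ follows because right translation by $\exp(tX)$ maps null sets to null sets, so the precise inclusion $E'\exp(tX)\subseteq E'$ for a representative $E'$ descends to the almost-everywhere inclusion for $E$.

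The step demanding the most care is $(\ref{monotone_direction}.1)\Leftrightarrow(\ref{monotone_direction}.2)$: the crux is the bookkeeping of the left-invariant flow (a right translation) against the right-invariant measure, ensuring both that differentiation under the integral is legitimate and that the differentiated integrand is again a non-negative test function paired with $X\uno_E$. Once that identity for $f'(t)$ is in place, the passage to the precise representative in $(\ref{monotone_direction}.3)$ and the remaining implications are soft consequences of right-invariance.
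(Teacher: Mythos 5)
Your proof is correct, and it differs from the paper's in two ways worth noting. First, the paper disposes of the implications (\ref{monotone_direction}.1) $\implies$ (\ref{monotone_direction}.2) $\implies$ (\ref{monotone_direction}.3) by citing \cite[Lemmas~2.7 and 2.8]{Bellettini-LeDonne}, whereas you reconstruct them from scratch: your monotonicity argument for $f(t)=\int_E u(p\exp(-tX))\,\dd\vol(p)$ (using $\tfrac{d}{dt}w_t=-Xw_t$ for $w_t(q)=u(q\exp(-tX))$) is exactly the content of the cited Lemma~2.7, and your density comparison via $B_\rho(x\exp(tX),r)=B_\rho(x,r)\exp(tX)$ is Lemma~2.8; this makes your write-up self-contained where the paper is not. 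Second, the cycles are closed differently: the paper proves (\ref{monotone_direction}.4) $\implies$ (\ref{monotone_direction}.1) directly, by writing $\langle X\uno_{E'},u\rangle$ as a limit of difference quotients $-\tfrac1t\bigl(\int_{E'\exp(tX)}u-\int_{E'}u\bigr)$ and using the inclusion $E'\exp(tX)\subseteq E'$ to see the quotient is non-negative (with dominated convergence to justify the limit exchange), whereas you return from (\ref{monotone_direction}.4) to (\ref{monotone_direction}.2) by the soft observation that right translation preserves null sets, and then lean on your already-proved (\ref{monotone_direction}.2) $\implies$ (\ref{monotone_direction}.1). The two closings are of comparable difficulty --- your (\ref{monotone_direction}.2) $\implies$ (\ref{monotone_direction}.1) via $f'(0)\geq 0$ is essentially the same derivative computation the paper performs in its (\ref{monotone_direction}.4) $\implies$ (\ref{monotone_direction}.1) step --- but your arrangement concentrates all the analysis in the single equivalence (\ref{monotone_direction}.1) $\Leftrightarrow$ (\ref{monotone_direction}.2) and makes every other arrow purely measure-theoretic, which is a clean way to organize the proposition. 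All individual steps check out (compact support of $w_t$, differentiation under the integral, and the right-invariance bookkeeping are all handled correctly).
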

\proof
Obviously (\ref{monotone_direction}.0) $\implies$ (\ref{monotone_direction}.1).
The fact that (\ref{monotone_direction}.1) $\implies$ 
(\ref{monotone_direction}.0) is shown in
 Remark~\ref{remark:positivedistr}.

The implications (\ref{monotone_direction}.1) $\implies$  (\ref{monotone_direction}.2) $\implies$  (\ref{monotone_direction}.3)  are in the previous work \cite[Lemma~2.7 and Lemma~2.8]{Bellettini-LeDonne}.
The implication (\ref{monotone_direction}.3) $\implies$ (\ref{monotone_direction}.4)
is obvious since $\tilde E^\rho$ is a representative of $E$.

We are left to prove that   (\ref{monotone_direction}.4) $\implies$  (\ref{monotone_direction}.1).
Since 
$E=E'$ a.e., then as distributions
$X\uno_{ E}=  X\uno_{ E'}$.
To show that $X\uno_{  E'}\geq 0,$
% that is, for all $u\in C^\infty(G)$ with $u\geq 0$, one has that $\langle X \uno_{E}, u\rangle\geq 0.$
%Such a fact easily follows by 
we perform the following calculation, for every smooth function $u$ with compact support and $u\geq 0$:
\begin{eqnarray*}
\langle X\uno_{E'}, u\rangle&=&-\int_{E'} Xu \dd\vol\\
&=&  -\int_{E'} \lim_{t\to 0} \dfrac{u(p\exp(tX))- u(p)}{t} \dd\vol(p)   \\
&=&  - \lim_{t\to 0}  \dfrac{1}{t} \left( \int_{E'} u(p\exp(tX))\dd\vol(p) -\int_{E'} u(p) \dd\vol(p)   \right)\\
&=&  - \lim_{t\to 0}  \dfrac{1}{t} \left( \int_{{E'}\exp(tX)}  u(p) \dd\vol(p )-\int_{E'} u(p) \dd\vol(p)   \right)\\
&=&  - \lim_{t\to 0}  \dfrac{1}{t} \left( -\int_{{E'}\setminus {E'} \exp(tX)}  u(p)\dd\vol(p)    \right)\quad \geq 0,
\end{eqnarray*}
where the limit and the integral could be swapped by the Dominated Convergence Theorem, since $u(p\exp(tX))$ is smooth both in $p$ and in $t$, and has compact support.  
%{\color{red} 
%\footnote{Check!}
%Regarding the fact that 
%$X\uno_{ E}$ is a Radon measure, we observe that 
%for all compact sets $K$ there is a constant $c_k$ such that
%$\vol(K\cap {E'}\setminus {E'} \exp(tX))\leq t c_k$. Consequently, the operator $u\mapsto 
%\langle X\uno_{E'}, u\rangle$ is bounded.
%%We are missing (\ref{monotone_direction}.4) $\implies$ (\ref{monotone_direction}.0).
%}
\qed

\begin{remark}\label{rmk:constant:constant_precise}
 
To justify the previous proposition, we stress that 
the implication
 (\ref{monotone_direction}.4) $\implies$  (\ref{monotone_direction}.1) rephrases with the statement that
 every precisely constant-normal set has constant normal. Regarding the opposite implication we refer to Remark~\ref{rmk:constant_precise} below. 
 
\end{remark}

\subsection{Cone property}

We next assume that $\G$ is a Carnot group. In the introduction we defined the notion of cones in $\G$ in \eqref{cone_def}. We stress that there are no a priori assumptions on openness or closure of cones, nor do we assume that the identity is or is not in the cone.

We rephrase condition \eqref{cone_property} from the introduction saying that a set
$E\subset \G$    has the $C$-{\em cone property}, or better the \emph{inner $C$-cone property}, if
\begin{equation}
\label{cone_property2}
 p\in E\implies   p \cdot C \subseteq     E.
\end{equation}
In the setting of Carnot groups, recently two   cone properties have been introduced. In fact, in \cite{DLMV} the following terminology is used.
A set $\Gamma\subseteq \G$ satisfies the \emph{outer $C$-cone property} if 
%there exists a cone $ C \subseteq \G $ such that 
\begin{equation}
	\label{cone_property2outer}
 	\Gamma\cap \Gamma C=\emptyset.
\end{equation}
This property is in similarity with the inner cone property \eqref{cone_property2}, which says 	$E\cap E C=EC.$
% 	for every $p\in \Gamma$.
The outer cone property is satisfied by the topological boundary $\Gamma=\partial E$ whenever  $E$ satisfies the the inner cone property with respect to an open cone.  
In this last paper we should consider only the (inner) cone property \eqref{cone_property2}.

\begin{remark}
\label{cone_prop_complement}
  If $E$ has the $C$-cone property, then $E^c:=\G\setminus E$ has the $C^{-1}$-cone property.
Indeed, by contradiction, assume that there exists $p\in  E^c$ such that it is not true that $p C^{-1}$ is in $ E^c$. Namely,
there exists $q\in p C^{-1} \cap E$.  Therefore, $q^{-1} p \in C$ and so
$$p= q q^{-1} p\in q C\subseteq E,$$
where at the end we used that $E$ has the $C$-cone property. We reached a contradiction with $p\in  E^c$.
\end{remark}
 \begin{remark}
\label{cone_prop_open}
The interior of a cone is a cone, unless it is empty. In particular,
every cone with interior contains an open cone. Consequently, if a
 a set has the  cone property  with respect to a       cone with interior, then it also 
  has the  cone property  with respect to an open      cone.  
 \end{remark}
\begin{proposition}
\label{prop:contractible}
If a set $E\subseteq \G$ has the $C$-cone property with $Int(C)
\neq \emptyset$, then $E$ is contractible.
\end{proposition}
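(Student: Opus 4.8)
The plan is to build an explicit contraction of $E$ to a single point, using that translating by a cone element keeps us inside $E$, together with the homogeneity of the Carnot distance to make the construction continuous. First I would reduce to the case where the cone is open: by Remark~\ref{cone_prop_open}, since $\Int(C)\neq\emptyset$, the set $E$ also has the cone property with respect to the open cone $\Int(C)$, so we may assume $C$ is open. We may assume $E\neq\emptyset$ (otherwise there is nothing to contract) and fix a basepoint $b\in E$ and an element $c\in C$. Let $\rho$ denote the (left-invariant, $\delta_t$-homogeneous) Carnot distance. Since $C$ is open and the right translation $w\mapsto wc$ is continuous and sends $e$ to $c$, there is $\eta>0$ such that $\rho(w,e)<\eta$ implies $wc\in C$.

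Next I would choose, for each $p\in E$, a cone element that pushes $p$ onto the fixed ray $R:=\{b\,\delta_t(c):t\ge 0\}\subseteq E$, depending continuously on $p$. Set the continuous function $\tau(p):=\rho(b,p)/\eta+1$ and $g(p):=p^{-1}b\,\delta_{\tau(p)}(c)$. Writing $g(p)=\delta_{\tau(p)}\!\big(\delta_{1/\tau(p)}(p^{-1}b)\cdot c\big)$ and using left-invariance and homogeneity, one has $\rho\big(\delta_{1/\tau(p)}(p^{-1}b),e\big)=\rho(b,p)/\tau(p)<\eta$, hence $\delta_{1/\tau(p)}(p^{-1}b)\cdot c\in C$, and applying $\delta_{\tau(p)}$ (which preserves $C$) gives $g(p)\in C$. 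In particular $p\,g(p)=b\,\delta_{\tau(p)}(c)\in bC\subseteq E$ lies on $R$.

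Finally I would write down the homotopy $\hat H\colon E\times[0,1]\to\G$,
$$\hat H(p,s)=\begin{cases} p\,\delta_{2s}\big(g(p)\big), & 0\le s\le \tfrac12,\\ b\,\delta_{(2-2s)\tau(p)}(c), & \tfrac12\le s\le 1,\end{cases}$$
and check: the two branches agree at $s=\tfrac12$ (both equal $b\,\delta_{\tau(p)}(c)$); $\hat H(p,0)=p$ and $\hat H(p,1)=b$; for $s\in(0,\tfrac12]$ we have $\delta_{2s}(g(p))\in C$, so $\hat H(p,s)\in pC\subseteq E$, while for $s\in[\tfrac12,1]$ the values lie on $R\subseteq E$. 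Thus $\hat H$ takes values in $E$, is continuous (a composition of continuous maps, $\tau$ being continuous), restricts to $\id_E$ at $s=0$ and to the constant map $b$ at $s=1$, so $E$ is contractible.

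The hard part is not the homotopy formula but arranging that every point of $E$ can be slid onto one fixed ray while staying inside $E$, with continuous dependence on the point; the pointwise fact ``$p^{-1}b\,\delta_t(c)\in C$ for all large $t$'' is elementary, but turning it into a single continuous deformation is the crux. This is exactly where the homogeneity and left-invariance of the Carnot distance enter: they force the required ``time'' $\tau(p)$ to be an affine function of $\rho(b,p)$, hence continuous, which is what makes $g$ continuous and the whole construction go through.
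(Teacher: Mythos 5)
Your proof is correct and follows essentially the same strategy as the paper's: a two-stage homotopy that first slides each point $p$ inside the cone $pC$ onto a point of a fixed dilation ray (using homogeneity and invariance of the Carnot distance to make the required ``time'' a continuous function of $\rho(b,p)$, exactly as the paper does with $|g|=\rho(1_\G,g)$ and a rescaled $C$ containing $\bar B_\rho(q_0,1)$), and then contracts that ray to the basepoint. The only cosmetic differences are that you work with a left-invariant distance and an affine time $\tau(p)=\rho(b,p)/\eta+1$ instead of a right-invariant one normalized by dilating $C$; this incidentally sidesteps the degenerate case $|g|=0$ in the paper's computation.
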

\begin{proof}
Up to left translating, we assume $1_\G\in E$.
Fix an auxiliary right-invariant Carnot distance $\rho$.
Since $C$ has interior, up to dilating we assume that for some $q_0\in E$ we have that the closed unit ball at $q_0$ satisfies
\begin{equation}\label{ball:in:C}
\bar B_\rho (q_0, 1)
\subseteq C.
\end{equation}
We shall consider the following retraction:
$\phi:\G\times[0,2] \to \G$
defined as for $(g,t)\in \G\times[0,1]$
$$\phi(g,t):= g \delta_t (g^{-1} \delta_{|g|} (q_0)),$$
where $|g|:=\rho(1_\G, g)$, and for $(g,t)\in \G\times[1,2]$
$$\phi(g,t):=  \delta_{2-t} ( \delta_{|g|} (q_0)).$$
We observe that $\phi$ is continuous, since for $t=1$
$$g \delta_1 (g^{-1} \delta_{|g|} (q_0))  = \delta_{|g|} (q_0)  = \delta_{2-1} ( \delta_{|g|} (q_0)),$$
because $\delta_1$ is the identity map.

The map $\phi$ is a retraction. Indeed, since $\delta_0$ is the  map  that is constantly equal to $1_\G$, we have that at $t=0$ the map $\phi$ is the identity on $\G$ and at $t=2$ it is constantly equal to $1_\G$.

We check now that if $g
\in E$, then $\phi(g,t)\in E$ for all $t\in  [0,2]$.
Assume first that $t\in  [0,1]$. We claim that
\begin{equation}\label{13_5_19A}
\phi(g,t) \in g C \subseteq E,\qquad \text{ for } t\in  [0,1],
\end{equation}
where the last containment holds since $E$ has the $C$-cone property. Regarding the inclusion \eqref{13_5_19A}, proving that
$g \delta_t (g^{-1} \delta_{|g|} (q_0))  \in g C $ is equivalent to
\begin{equation}\label{13_5_19B}
 \delta_{|g|^{-1}} (g^{-1} \delta_{|g|} (q_0))  \in  C ,\end{equation}
where we have used that $C$ is a cone.
We shall show this last inclusion using \eqref{ball:in:C}.
Indeed, using right-invariance we have
\begin{eqnarray*}
\rho(q_0,
 \delta_{|g|^{-1}} (g^{-1} \delta_{|g|} (q_0))  ) &=&
 \rho(q_0,
 \delta_{|g|^{-1}} (g^{-1} )  q_0  )\\
  &=&
 \rho(1_\G,
 \delta_{|g|^{-1}} (g^{-1} )    )\\
   &=&
 |g|^{-1}\rho(1_\G,
 g^{-1}     ) = |g|^{-1}\rho(
 g ,     1_\G ) =1.\\
\end{eqnarray*}
Hence \eqref{13_5_19A} and \eqref{13_5_19B} are proved.

Assume then $t\in [1,2]$. Since $C$ is a cone and $q_0\in E$,  we immediately have that
$$\phi(g,t) = \delta_{2-t} ( \delta_{|g|} (q_0)) \in C\subseteq E,$$
where at the end we used that $1_\G\in E$.

Hence, the map $\phi$ is a (continuous) retraction of $E$ onto $1_\G$.
\end{proof}

Most probably, one may improve the above result and show that the interior of such a set is homeomorphic to  an open ball. We do not go in this direction. However, we suggest to use a criterion by Stallings \cite{Stallings1962}:
%J. Stallings, The piecewise linear structure of Euclidean space, Proc Camb Phil Soc 58(1962) (481-88)}: 
a contractible open subset of  $\R^n$ that is simply connected at infinity is homeomorphic to $\R^n$. %if n > 4: by 
Recall that  $X$   `simply connected at infinity' means that for each compact $K$ of $X$ there is a larger compact $L$ such that the induced map on $\pi_1$ from $X \setminus L$ to $X \setminus K$ is trivial.

 We will not make use of the next remark, which can be used to directly prove that sets with precisely constant normal sets are connected. The reason is that in Theorem~\ref{main:thm2} we shall prove more: they are contractible.
\begin{remark} 
%\footnote{E: Probably, this remark is useless.}
If $C$ is an open cone and $K\subseteq \G$ is a bounded set, then
  $\cap_{g\in K} gC  \neq \emptyset$.  
  % for every $g \in G$.
Here is the simple proof.
Fix $p\in  C$.
Since $C$ is open, the set $p^{-1}C$ is a neighborhood of the identity element of $\G$.
% and choose a neighbourhood $U$ of $e$ (identity of $G$) in $G$ such that $p \in q C$ for every $ q\in U$. 
Since   $K $ is a bounded set, for every fixed Carnot distance $\rho$  there exists $R>0$ such that for all $g\in K$ we have
$\rho(1, g)<R<0$.
%For an arbitrary $g\in G$
 Choose $\eps>0$ such that 
 $\delta_{\eps}B_\rho(1,R)  \subseteq  C^{-1}p$.
 Consequently, for every $g\in K$ we have 
 $\delta_{\eps}g \in  C^{-1}p$, which implies $p \in (\delta_{\eps} g) C$. 
We deduce  that %, on the one hand, 
 the point $\delta_{\frac{1}{\eps}} p $ is in $\delta_{\frac{1}{\eps}}((\delta_{\eps} g) C) = g \delta_{\frac{1}{\eps}} C = g C$, the latter equality uses that $C$ is a cone. 
 %On the other hand, we have $\delta_{\frac{1}{\eps}} p \in C$ because $C$ is a cone.
  Therefore, we infer that $\delta_{\frac{1}{\eps}} p \in   gC$ for every  $g\in K$.
\end{remark}

\subsection{Semigroups}
In the paper, we use the standard terminology and say that a subset of a group is a {\em semigroup} (a better term is {\em semisubgroup}) if it is closed under multiplication. It is evident that every subset is contained in a semigroup 
with the property of being the smallest semigroup containing the set.
%that is the smallest.

We next provide some basic results about semigroups in Carnot groups.
\begin{remark}\label{rmk:int:semigroup}
Both the closure and the interior of a semigroup in a topological group are  semigroups. Indeed, if $S$ is a semigroup and $p_n$, $q_n\in S$ are sequences such that $p_n\to p$, $q_n\to q$ then $p_nq_n $ is a    sequence in $S$ such that $p_nq_n\to pq$. So $\Cl(S)$ is a semigroup.
If instead we take $p,q\in \Int(S)$ then there exists $U$ open set with $q\in U\subseteq S$. Therefore, the set $pU$ is a neighborhood of $q$ and is contained in $S$. So $\Int(S)$ is a semigroup
\end{remark}

\begin{remark}\label{rmk:semi_coneprop} If $S$ is a semigroup, then we trivially have
\begin{equation}
\label{cone_property3}
 p\in S\implies   p \cdot S \subseteq     S.
\end{equation}
This implies that, if in addition $S$ is a cone, then $S$ has the $S$-cone property.
\end{remark}
Along this section, we shall say that a set is a {\em conical semigroup} if it is a semigroup and a cone.
From Proposition~\ref{prop:contractible} we know that if $S$ is a conical semigroup  with interior, then it is contractible. Next lemma says that for semigroups the statement is true regardless of having interior.
%{\color{cyan}
\begin{lemma}\label{lem:contractible}
%\footnote{E: Probably this lemma is useless}
%Il lemma e' ridondante: we know that conical semigroups have the cone property (Remark \eqref{rmk:semi_coneprop}), so one just applies Proposition~\ref{prop:contractible}.  NO non assumiamo parte interna}
In Carnot groups every  conical semigroup   is contractible. 
\end{lemma}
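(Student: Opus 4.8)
The plan is to exhibit an \emph{explicit} contraction of $S$ to one of its own points, using only that $S$ is closed under multiplication and invariant under the dilations $\delta_t$, $t>0$; crucially this never requires $S$ to have interior, which is the whole point of upgrading Proposition~\ref{prop:contractible}. Since a cone is nonempty by convention, I would first fix once and for all a point $p_0\in S$. I would also recall the two structural facts that make everything run: $\delta_0$ is the constant map onto $1_\G$, and $(t,p)\mapsto \delta_t(p)$ is jointly continuous on $[0,\infty)\times \G$ (in exponential coordinates $\delta_t$ is multiplication by $t^j$ on the $j$-th layer).

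The candidate homotopy is
$$\Psi:S\times[0,1]\to \G,\qquad \Psi(p,s):=\delta_{1-s}(p)\cdot \delta_{s}(p_0).$$
The first thing to check is that $\Psi$ actually lands in $S$. For $s\in(0,1)$ both factors lie in $S$: indeed $\delta_{1-s}(p)\in\delta_{1-s}(S)=S$ and $\delta_s(p_0)\in\delta_s(S)=S$ because $S$ is a cone and $1-s,s>0$, and then their product is in $S$ because $S$ is a semigroup. At the two endpoints the factor carrying $\delta_0$ collapses to the identity element: at $s=0$ one gets $\Psi(p,0)=p\cdot\delta_0(p_0)=p\cdot 1_\G=p\in S$, while at $s=1$ one gets $\Psi(p,1)=\delta_0(p)\cdot p_0=1_\G\cdot p_0=p_0\in S$. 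Hence $\Psi$ is well defined as a map into $S$.

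Next I would record the three properties that together say $\Psi$ is a contraction: it is continuous (being the composition of the jointly continuous dilation map with group multiplication), it satisfies $\Psi(\cdot,0)=\id_S$, and $\Psi(\cdot,1)$ is the constant map onto $p_0$. Therefore $\id_S$ is homotopic, within $S$, to a constant map, i.e.\ $S$ is contractible.

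I do not anticipate a genuine obstacle. In contrast with Proposition~\ref{prop:contractible}, where the cone $C$ only acts on $E$ from the right and one is forced to manufacture a ball inside $C$ to build a retraction, here the set $S$ is simultaneously the cone and the semigroup, so the two operations $\delta_\bullet$ and $\cdot$ keep the homotopy inside $S$ for free. The only delicate point is membership and continuity at the endpoints $s=0,1$, precisely where the dilation degenerates to the constant map $1_\G$; this is dispatched by the identity $\delta_0\equiv 1_\G$ already exploited in the proof of Proposition~\ref{prop:contractible}.
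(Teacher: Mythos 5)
Your proof is correct and rests on exactly the same two ingredients as the paper's argument: dilation-invariance keeps $\delta_t(x)\in S$ for $t>0$, the semigroup property keeps products in $S$, and the degeneration $\delta_0\equiv 1_\G$ handles the endpoints. The paper reaches the same conclusion by concatenating the two curves $t\mapsto p\,(\delta_t q)$ and $t\mapsto(\delta_t p)\,q$ through the intermediate point $pq$, whereas your single formula $\delta_{1-s}(p)\cdot\delta_s(p_0)$ packages the same idea into one continuous homotopy; this is a streamlined variant rather than a genuinely different route.
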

%(pf. take $p,q$ in the interior. connect $p$ to $pq$ via $p\delta_t q$ and 
%connect  $q$ to $pq$ via $\delta_t p q$) 
\begin{proof}
%Denote by $\delta_t$ the (subRiemannian, inhomogeneous) dilation of factor $t >0$. 
Let $p,q \in S$. The curve $t \mapsto\delta_t q$ for $t\in (0,1]$ extends by continuity at $t=0$ with value the identity of the group. Moreover, since $S$ is a cone, every point $\delta_t q$ for $t>0$ lies in $S$. 
%Fix any $p\in S$ and consider a varying $q\in S$. 
Consider the curve $t\mapsto p (\delta_t q)$, which maps into $S$ by the semigroup property, since $p\in S$ and $\delta_t q \in S$. This curve, extended by continuity at $t=0$, is continuous and maps to $p$ for $t=0$ and to $pq$ for $t=1$.
%; moreover the support of the curve lies in $S$ (because $p \in S$ by assumption). 
Reversing the roles of $p$ and $q$ we obtain a curve $t\mapsto (\delta_t p) q$ (extended by continuity at $t=0$) that lies in   $S$ and connects $q$ to $pq$. Composing the first curve with the reverse of the second, we obtain a curve from $p$ to $q$. %, so $S$ is path connected.
Such a curve from $p$ to $q$ depends continuously on $q$. Hence, we defined a contraction of $S$ onto $p$. 
\end{proof}

%Let $G$, $\vol$, $E$, $\rho$, $\tilde E$ as above.
Let $W$ be a family of left-invariant vector fields on a Lie group $\G$.
We denote by $S_W$  the   semigroup generated by  $\exp(W)$. With abuse of terminology, we shall say that $S_W$ is the {\em semigroup generated by} $W$.  The set $S_W$ can be described as 
\begin{equation}
\label{def:SW}
S_W := \bigcup_{k=1}^\infty (\exp(W))^k,\end{equation}
where
\begin{equation}
\label{(exp(W))^k}
(\exp(W))^k :=
 \{ \Pi_{i=1}^k \exp(w_i)\mid   w_1,\ldots, w_k\in W  \}.\end{equation}
We stress that it may happen (see for example Section~\ref{sec:caso3}) that $S_W\neq (\exp(W))^k$, for each integer $k$.

We will prove  the following results in a larger generality. However, our main interest is when $W$ is what we shall call a {\em horizontal half-space} in a Carnot group, that is, after  
fixing a scalar product on the first stratum $V_1$ %   of the Lie algebra of $\G$.
and given  $X\in V_1$, we consider $W=W_X$ to be the orthogonal space to $X$ within $V_1$.
%Let $S_W$ be the subgroup generated by a horizontal half-space $W$.

\begin{remark}\label{rmk:semigroup:cone}
We observe that if $W$ is a subspace of the Lie algebra of a Carnot group and is invariant with respect to positive dilations, i.e.,  $\delta_\lambda(W)=W$, for all $\lambda>0$, as for example is the case of half-spaces in $V_1$, then the semigroup $S_W$ is a cone. Indeed, since each $\delta_\lambda$ is a group homomorphism, if $\Pi_{i=1}^k \exp(w_i)$ is an arbitrary point of $S_W$, then 
$\delta_\lambda(\Pi_{i=1}^k \exp(w_i))= \Pi_{i=1}^k \exp(\delta_\lambda(w_i))$ is also a point of $S_W$.
\end{remark}

The fact that semigroups generated by sets that are Lie bracket generating have interior is standard in geometric control theory, where such semigroups are called attainable sets. 
We refer the reader to  \cite[Chapter 8, Theorem 8.1 and Proposition 8.5]{Agrachev_Sachkov}.
 However, since the proof is short, we include a self contained one without claiming any originality. In the next statements, we shall denote by $\R_+$ the set of positive real numbers: $\R_+:= (0,+\infty)$.

\begin{proposition}\label{semigroup:has:interior}
Let $G$ be a Lie group and $W$ be a subset of its Lie algebra ${\rm Lie}(G)$ such that $\R_+ W = W$. If  there is no proper subalgebra of ${\rm Lie}(G)$ containing $W$, then the semigroup $S_W$ has interior.
\end{proposition}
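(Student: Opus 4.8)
The plan is to realize $S_W$ as the forward attainable set of the left-invariant control system with admissible velocities $\{v:v\in W\}$, and to produce an open subset of it by a maximal-rank argument. Since $\R_+W=W$, for any $v_1,\dots,v_k\in W$ and any $t_1,\dots,t_k>0$ the product $\exp(t_1v_1)\cdots\exp(t_kv_k)$ lies in $(\exp(W))^k\subseteq S_W$; thus for fixed $v_i\in W$ the map $F(t_1,\dots,t_k):=\exp(t_1v_1)\cdots\exp(t_kv_k)$ sends the open set $\R_+^k$ into $S_W$. It therefore suffices to exhibit one such $F$ whose differential $dF$ attains rank $\dim\G$ at some point of $\R_+^k$: by the rank theorem $F$ is then open near that point, and its image is an open subset of $S_W$, so $S_W$ has interior.

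First I would let $m$ be the maximum, over all $k$, all $v_1,\dots,v_k\in W$ and all $(t_1,\dots,t_k)\in\R_+^k$, of $\operatorname{rank} dF$ at $(t_1,\dots,t_k)$, and aim to prove $m=\dim\G$. I fix a configuration attaining $m$ and restrict the parameters to the open set $U$ on which the rank is locally constant equal to $m$ (open by lower semicontinuity of the rank together with maximality). By the constant-rank theorem the image $\Sigma:=F(U)$ is a piece of an immersed $m$-dimensional submanifold of $\G$, and the partials $\partial_{t_i}F$ span $T_g\Sigma$ at each $g=F(t)\in\Sigma$.

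The crucial step is to show that every $v\in W$, viewed as a left-invariant vector field, is tangent to $\Sigma$ at each of its points. For this I append one more forward factor: given $v\in W$, consider $H(t,s):=F(t)\exp(sv)$ on $U\times(0,\epsilon)$, which is again a product of exponentials of elements of $W$ with positive parameters, so $\operatorname{rank} dH\le m$ throughout. Because right translation $R_{\exp(sv)}$ is a diffeomorphism, the vectors $\partial_{t_i}H=dR_{\exp(sv)}(\partial_{t_i}F)$ already span the $m$-dimensional space $T_{g\exp(sv)}(\Sigma\exp(sv))$; since $\partial_sH=v|_{H(t,s)}$ and the total rank is at most $m$, the value $v|_{g\exp(sv)}$ must lie in $T_{g\exp(sv)}(\Sigma\exp(sv))$, and letting $s\to0^+$ gives $v|_g\in T_g\Sigma$. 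Running this at every $g\in\Sigma$ shows all of $W$ is tangent to $\Sigma$.

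To conclude, since all fields in $W$ are tangent to the submanifold $\Sigma$, so are all their iterated Lie brackets, as the vector fields tangent to a submanifold are closed under bracket. These iterated brackets span, as left-invariant vector fields, the Lie subalgebra generated by $W$, which by hypothesis is all of ${\rm Lie}(\G)$; since left-invariant vector fields span every tangent space, we get $T_g\Sigma=T_g\G$ for $g\in\Sigma$, forcing $m=\dim\G$. I expect the main obstacle to be precisely this tangency step, and especially the need to respect the semigroup constraint: because $S_W$ is only a semigroup one may move forward in time only, which is why the auxiliary factor $\exp(sv)$ is taken with $s>0$ and the conclusion is reached through the one-sided limit $s\to0^+$.
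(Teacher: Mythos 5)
Your proof is correct, and it takes a genuinely different route from the paper's. The paper argues bottom-up: starting from the half-orbit $\exp(\R_+X_1)$, it inductively builds submanifolds $M_1\subset M_2\subset\cdots$ of increasing dimension inside $S_W$, at each stage invoking complete nonholonomicity to find an element of $W$ whose left-invariant field is transverse to $M_k$ at some point and then flowing forward for positive times. You instead give the classical Krener-type argument (essentially the one in the Agrachev--Sachkov reference the paper cites): take the maximal rank $m$ of the endpoint maps $F(t_1,\dots,t_k)=\exp(t_1v_1)\cdots\exp(t_kv_k)$ over $\R_+^k$, use lower semicontinuity of rank and the constant-rank theorem to get an $m$-dimensional image $\Sigma$, show by appending a forward factor $\exp(sv)$ and letting $s\to0^+$ that every $v\in W$ is tangent to $\Sigma$, and conclude via closure of tangent fields under brackets that $m=\dim G$. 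The two proofs rest on the same underlying fact (fields tangent to a submanifold are closed under bracket), but your organization makes the role of the bracket-generating hypothesis fully explicit at the end, whereas the paper uses it implicitly at each inductive step when asserting the existence of a transverse direction; the paper's version is shorter and avoids the constant-rank theorem and the Grassmannian limit, at the cost of leaving that existence claim less justified. Your handling of the semigroup constraint --- appending the extra factor only with $s>0$ and recovering tangency through the one-sided limit --- is exactly the right way to respect the lack of inverses, and the limit argument ($w_s\in V_s$, $w_s\to w$, $V_s\to V$ in the Grassmannian implies $w\in V$) is sound.
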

\begin{proof}
Denote by $\Delta$ the left-invariant subbundle such that $\Delta_{1_G}=\Span( W)$. Hence, the assumption on $W$ rephrases as the fact that $\Delta$ is completely nonholonomic.

Take $X_1\in W$. Then $M_1:=\exp(\R_+ X_1)$ is a 1-manifold contained in $S_W$.
Unless $G$ has dimension 1, there exists $p\in M_1$ such that $\Delta_p \nsubseteq T_pM_1$.
Since $ T_pM_1$  is a vector space, we get that $({\rm d} L_p) (W) \nsubseteq T_pM_1$.
Hence there exists $X_2\in W$ such that $({\rm d} L_p) (X_2) \notin T_pM_1$.
We deduce that the map
$$(t_1, t_2) \in \R_+\times \R_+ \mapsto \exp(t_1 X_1) \exp(t_2 X_2)$$
range inside $S_W$ and it is an immersion near some point $(\bar t_1, \bar t_2)$. We have constructed a 2-manifold $M_2$ in $S_W$.

By induction, assume we have a k-manifold $M_k$ in $S_W$. If $\dim G=k$, then $M_k$ is an open set of $G$. and we are done.
If $\dim G<k$, then $M_k$ is a proper submanifold and since $\Delta$ is completely not holonomic, we find a vector $X_3\in W$ such that its associated  left invariant vector field is transverse to $M_k$. Analogously as above, flowing from $M_k$ with respect to $X_3$ at positive times we get a subset of $S_W$ that contains a (k+1)-manifolds. By induction, we conclude.
\end{proof}
\begin{lemma}\label{lem:almost regular}
If $S\subseteq \G$ is a conical semigroup with interior in a Carnot group $\G$, then $S$ is almost regular.
\end{lemma}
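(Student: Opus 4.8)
The plan is to establish the inclusion $S \subseteq \Cl(\Int(S))$, which by Definition~\ref{def:regular} is precisely the assertion that $S$ is almost regular. First I would record that $\Int(S)$ is itself a (nonempty) conical semigroup: it is a semigroup by Remark~\ref{rmk:int:semigroup}, it is a cone by Remark~\ref{cone_prop_open}, and its nonemptiness is exactly the hypothesis that $S$ has interior. This is the only structural preliminary needed.

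Next, fix once and for all a point $q_0 \in \Int(S)$ and let $p \in S$ be arbitrary. The key observation is that left translation $L_p$ by $p$ is a homeomorphism of $\G$, so $p \cdot \Int(S)$ is open; moreover $p \cdot \Int(S) \subseteq p \cdot S \subseteq S$, since $p \in S$ and $S$ is closed under multiplication. An open subset of $S$ is contained in $\Int(S)$, whence $p \cdot \Int(S) \subseteq \Int(S)$. Because $\Int(S)$ is a cone, we have $\delta_t(q_0) \in \Int(S)$ for every $t > 0$, and therefore the whole family $p \cdot \delta_t(q_0)$, with $t > 0$, lies in $\Int(S)$.

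To conclude I would let $t \to 0^+$. As in the proof of Lemma~\ref{lem:contractible}, the curve $t \mapsto \delta_t(q_0)$ extends continuously at $t = 0$ with value $1_\G$, so $p \cdot \delta_t(q_0) \to p \cdot 1_\G = p$. Thus $p$ is a limit of points of $\Int(S)$, i.e.\ $p \in \Cl(\Int(S))$. As $p \in S$ was arbitrary, this yields $S \subseteq \Cl(\Int(S))$, as desired.

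The argument is short, and the only genuinely substantive ingredients are the two facts being invoked: that the Carnot dilations contract $q_0$ to the identity as $t \to 0$ (this is where the Carnot, rather than merely Lie-group, structure is essential, since it supplies the approximating family $p\,\delta_t(q_0)$), and that multiplying an interior point on the left by an element of $S$ keeps us inside $\Int(S)$. Neither of these is a real obstacle; I do not expect any hard step, as the entire content lies in the choice of the approximating curve $t \mapsto p\,\delta_t(q_0)$.
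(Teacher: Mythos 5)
Your argument is correct and is essentially identical to the paper's own proof: both fix an interior point, use the cone property to dilate it toward the identity, left-multiply by an arbitrary element of $S$ to stay in $\Int(S)$, and pass to the limit $t\to 0^+$ to conclude $S\subseteq \Cl(\Int(S))$. No issues.
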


\begin{proof}
Fix $p\in \Int(S)$. Take an arbitrary $c\in S$.
Since $S$ is a cone, each point $\delta_t p$ is in $\Int(S)$, for all $t>0$.
Thus, since $S$ is a semigroup, each point $c\delta_t p$ is in $\Int(S)$, for all $t>0$.
Since $c\delta_t p\to c$ as $t\to 0$, then $c\in \Cl (\Int (S)).$ 
\end{proof}

With what  just observed in Remark~\ref{rmk:semigroup:cone}, Proposition~\ref{semigroup:has:interior}, and Lemma~\ref{lem:almost regular}, we immediately deduce the following result.
\begin{corollary}\label{semigroup_Canot} 
Let $\G$ be a Carnot group. % and $E\subset \G$ a measurable set. Fix a scalar product on the first stratum $V_1$   of the Lie algebra of $\G$.
%Given  $X\in V_1$, set $W=W_X$ to be the orthogonal space to $X$ in $V_1$.
Let $S_W$ be the subgroup generated by a horizontal half-space $W$.
Then $S_W$ is a cone with interior and it is almost regular.
%\begin{itemize}
%\item[(\ref{monotone_subgroup}.i)] $S_W$ is a cone with respect to the dilations $\delta_\lambda$, $\lambda\in \R_+$,
%\item[(\ref{monotone_subgroup}.ii)] $ {S_W}$ has nonempty interior,  
%\item[(\ref{monotone_subgroup}.iii)]  $\overline{S_W}=\overline{\Int (S_W)}$
%\end{itemize}
\end{corollary}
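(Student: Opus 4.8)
The plan is to simply assemble the three preceding results, exactly as the sentence introducing the corollary announces; the corollary is an immediate deduction, so the work is in matching hypotheses rather than in any new argument. Throughout, let $W$ be a horizontal half-space in $V_1$, so that in particular $\delta_\lambda(W)=W$ for every $\lambda>0$ (a closed half-space through the origin is preserved by positive dilations).

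First I would show that $S_W$ is a cone by invoking Remark~\ref{rmk:semigroup:cone}. Although that remark is phrased for a \emph{subspace} $W$ invariant under positive dilations, its proof uses only that each $\delta_\lambda$ is a group homomorphism and that $\delta_\lambda(w)\in W$ whenever $w\in W$; both hold for a half-space, and indeed the remark explicitly mentions half-spaces as the motivating case. Hence $\delta_\lambda\bigl(\prod_{i=1}^k\exp(w_i)\bigr)=\prod_{i=1}^k\exp(\delta_\lambda(w_i))\in S_W$, so $\delta_\lambda(S_W)=S_W$ and $S_W$ is a conical semigroup.

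Next I would establish that $S_W$ has interior via Proposition~\ref{semigroup:has:interior}, which requires checking its two hypotheses. The condition $\R_+ W=W$ is immediate from $\delta_\lambda(W)=W$. The remaining condition, that no proper subalgebra of ${\rm Lie}(\G)$ contains $W$, is the only step with genuine content: since $W$ is a closed half-space of $V_1$, its linear span is all of $V_1$, and by the defining property of a stratification $V_1$ bracket-generates ${\rm Lie}(\G)$; therefore the Lie subalgebra generated by $W$ is all of ${\rm Lie}(\G)$, and no proper subalgebra can contain $W$. Proposition~\ref{semigroup:has:interior} then gives $\Int(S_W)\neq\emptyset$.

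Finally, having exhibited $S_W$ as a conical semigroup with interior in a Carnot group, I would apply Lemma~\ref{lem:almost regular} verbatim to conclude that $S_W$ is almost regular. This completes the three required assertions (cone, interior, almost regular). The only point that is not pure bookkeeping is the bracket-generation verification in the middle step, which is exactly the input reflected in the earlier Orbit-Theorem remark; everything else is a direct citation of the three quoted results.
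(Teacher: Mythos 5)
Your proof is correct and follows exactly the route the paper takes: the corollary is deduced by combining Remark~\ref{rmk:semigroup:cone}, Proposition~\ref{semigroup:has:interior}, and Lemma~\ref{lem:almost regular}, with the bracket-generation of $V_1$ supplying the hypothesis of the proposition. Your explicit verification that the span of a horizontal half-space is all of $V_1$ is the only detail the paper leaves implicit, and it is handled correctly.
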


The link between the semigroups that we are considering and the theory of constant normal sets is given in the following results. The first one is immediate and relates the precise monotonicity with the cone property with respect to the generated semigroup.
%since to verify the  $S_W $-cone property, $    E  \cdot S_W \subseteq     E,$ it is enough to check that  $E  \exp( w ) \subseteq     E$ for all  $w\in W$.

\begin{proposition}\label{monotone_semigroup} In a Carnot group $\G$, let     $W\subseteq {\rm Lie}(\G)$  be  such that $\R_+W=W$. 
A set $E\subseteq \G$ has the $S_W $-cone property if and only if 
 $E$ is precisely $X$-monotone,
for all $X\in W.$

%\begin{remark}
\label{Remark_unioni}
In particular, we have the following consequences:
\begin{itemize}
\item[a)] The set  $S_W $ is precisely $X$-monotone,
for all $X\in W;$
\item[b)] If $A$ is an arbitrary set  and $(E_\alpha)_{\alpha\in A}$ is a family of sets  that are precisely $X$-monotone,
for all $X\in W$, then 
 $E:=\bigcup_\alpha E_\alpha$ is precisely $X$-monotone,
for all $X\in W;$
\item[c)]Every  set $E$ that is precisely $X$-monotone, for all $X\in W$,   satisfies
$$E=\bigcup_{p\in E} p S_W,$$
if either $E$ is open or if $1_\G\in S_W$. 
\end{itemize}
%\end{remark}
 \end{proposition}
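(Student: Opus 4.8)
The plan is to prove the main equivalence of Proposition~\ref{monotone_semigroup} first, and then read off the three consequences \textbf{(a)}, \textbf{(b)}, \textbf{(c)} from it, using the characterization of precise monotonicity given in Proposition~\ref{monotone_direction} (specifically the equivalence of (\ref{monotone_direction}.4) with $X$-monotonicity, rephrased as the inclusion $E\exp(tX)\subseteq E$ for all $t>0$). The key translation I would exploit is that ``$E$ is precisely $X$-monotone for all $X\in W$'' means precisely that $E\exp(tX)\subseteq E$ for every $X\in W$ and every $t>0$, i.e.\ $E\cdot\exp(tX)\subseteq E$ for all elements of $\exp(\R_+W)=\exp(W)$ (here I use the hypothesis $\R_+W=W$, so that scaling $t$ does not escape $W$).

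First I would prove the forward implication. Suppose $E$ has the $S_W$-cone property, i.e.\ $p\in E\implies pS_W\subseteq E$. Since $\exp(W)\subseteq S_W$ by the definition \eqref{def:SW} of $S_W$ as the semigroup generated by $\exp(W)$, for any $X\in W$ and $t>0$ we have $\exp(tX)=\exp(t X)\in\exp(\R_+ W)=\exp(W)\subseteq S_W$; hence $E\exp(tX)\subseteq E\,S_W\subseteq E$, which is exactly precise $X$-monotonicity. Conversely, assume $E$ is precisely $X$-monotone for all $X\in W$, so $E\exp(w)\subseteq E$ for every $w\in W$ (writing $w=tX$). An arbitrary element of $S_W$ has, by \eqref{def:SW} and \eqref{(exp(W))^k}, the form $\exp(w_1)\cdots\exp(w_k)$ with $w_1,\dots,w_k\in W$. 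Applying the inclusion $E\exp(w_i)\subseteq E$ one factor at a time — first $E\exp(w_1)\subseteq E$, then $(E\exp(w_1))\exp(w_2)\subseteq E\exp(w_2)\subseteq E$, and so on by induction on $k$ — yields $E\exp(w_1)\cdots\exp(w_k)\subseteq E$. Since this holds for every generator product, $E\,S_W\subseteq E$, i.e.\ $E$ has the $S_W$-cone property.

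Now for the consequences. For \textbf{(a)}, by Remark~\ref{rmk:semigroup:cone} the set $S_W$ is a cone, and by Remark~\ref{rmk:semi_coneprop} every conical semigroup has the $S_W$-cone property (since $p\in S_W\implies pS_W\subseteq S_W$); applying the equivalence just proved gives that $S_W$ is precisely $X$-monotone for all $X\in W$. For \textbf{(b)}, the $S_W$-cone property is manifestly stable under arbitrary unions: if each $E_\alpha$ satisfies $E_\alpha S_W\subseteq E_\alpha$, then for $p\in\bigcup_\alpha E_\alpha$ we have $p\in E_{\alpha_0}$ for some $\alpha_0$, whence $pS_W\subseteq E_{\alpha_0}\subseteq\bigcup_\alpha E_\alpha$; so $E=\bigcup_\alpha E_\alpha$ has the cone property and hence is precisely $X$-monotone for all $X\in W$ by the equivalence.

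The only genuinely delicate point is \textbf{(c)}, the identity $E=\bigcup_{p\in E}pS_W$ under the stated side hypothesis. The inclusion $\bigcup_{p\in E}pS_W\subseteq E$ is just the cone property. For the reverse inclusion I must show $E\subseteq\bigcup_{p\in E}pS_W$, i.e.\ every $p\in E$ lies in $pS_W$; this requires $1_\G\in S_W$, which is exactly the second alternative in the hypothesis. The subtlety is the \emph{first} alternative, when $E$ is open but $1_\G$ may fail to lie in $S_W$: here I would argue that if $p\in E$ and $E$ is open, then for $X\in W$ the points $p\exp(tX)$ lie in $E$ for small $t>0$ by the cone property, and as $t\to0^+$ they converge to $p$; since $\exp(tX)\in S_W$, each such point $p\exp(tX)$ witnesses membership of $p$ in the closure of $\bigcup_{q\in E}qS_W$, but to get $p$ itself inside some $qS_W$ I use openness to find $q=p\exp(tX)\in E$ with $t>0$ small, and then $p=q\exp(-tX)$. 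To close, I would note that $\exp(-tX)\in S_W$ because $-tX\in W$ (as $W$ is a half-space, stable under the relevant scaling and containing $-X$ when it contains $X$ only if $W$ is a full subspace — here $W$ is the orthogonal \emph{subspace} $W_X$, which is symmetric), giving $p\in qS_W\subseteq\bigcup_{q\in E}qS_W$. This is the step I expect to require the most care, as it is precisely where the two alternative hypotheses enter and where the symmetry properties of $W$ must be invoked correctly.
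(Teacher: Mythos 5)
Your proof of the main equivalence and of consequences a) and b) is correct and follows essentially the same route as the paper: the forward implication via $\exp(W)\subseteq S_W$ together with $\R_+W=W$, the converse by peeling off one generator $\exp(w_i)$ at a time, and b) by stability of the inclusion $E_\alpha S_W\subseteq E_\alpha$ under arbitrary unions.

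The gap is in part c), in the case where $E$ is open but $1_\G\notin S_W$. You take $q=p\exp(tX)\in E$ and write $p=q\exp(-tX)$, and to conclude $p\in qS_W$ you need $\exp(-tX)\in S_W$, i.e.\ essentially $-tX\in W$. You justify this by asserting that $W$ is a symmetric subspace, but the proposition only assumes $\R_+W=W$, and in the main application $W$ is a \emph{closed half-space} in $V_1$ (this is exactly how the proposition is invoked in the proof of Theorem~\ref{thm1}); for instance $W=\{aX_1+bX_2:\ a\in\R,\ b\geq 0\}$ in $\mathbb F_{23}$ contains $X_2$ but not $-X_2$. So $\exp(-tX)$ need not lie in $S_W$ and the step fails; note also that your argument never actually uses the openness of $E$, which is the hypothesis that is supposed to carry the weight here. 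The correct move is to perturb in the opposite direction: for $q\in E$ and any $X\in W$, set $p_t:=q\exp(tX)^{-1}=q\exp(-tX)$; then $p_t\to q$ as $t\to 0^+$, so $p_t\in E$ for small $t>0$ \emph{because $E$ is open}, and $q=p_t\exp(tX)\in p_tS_W$ since $tX\in\R_+W=W$. This gives $E\subseteq\bigcup_{p\in E}pS_W$ with no symmetry assumption on $W$. (The paper only remarks that ``the case $E$ open is similar,'' but this is the intended argument.)
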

Recall that in our convention $\R_+$ does not contain $0$ so the set $W$ may not contain $0$; hence, the identity element  $1_\G$ may not be in the semigroup $S_W$. 

\begin{proof}[Proof of Proposition~\ref{monotone_semigroup} and its consequences] Since $\Pi_{i=1}^k \exp(w_i)$ is an arbitrary point in $S_W$, for some $k\in \N$, and $   w_1,\ldots, w_k\in W,$ it is enough to observe that   
$    E  \cdot S_W \subseteq     E$ if and only if  $E  \exp( w ) \subseteq     E$ for all  $w\in W$. Since $\R_+W=W$, we also have that 
$E  \exp( X ) \subseteq     E$ if and only if  $E  \exp( t X ) \subseteq     E$, for all $t>0$, which is the definition of 
precisely $X$-monotone.

Regarding the consequence a), just recall Corollary~\ref{semigroup_Canot} and 
Remark~\ref{rmk:semi_coneprop}. Regarding b), 
$    E_\alpha  \cdot S_W \subseteq     E_\alpha$ for all $\alpha$ then taking unions we get
$ ( \bigcup_\alpha  E_\alpha  )\cdot S_W = \bigcup_\alpha ( E_\alpha  \cdot S_W )\subseteq   \bigcup_\alpha  E_\alpha$. Finally, c) is trivial since if $1_\G\in S_W$ then 
$E\subseteq \bigcup_{p\in E} p S_W \subseteq E.$
The case $E$ open is similar.
\end{proof}

%The following are equivalent:
%\begin{itemize}
%\item[(\ref{monotone_subgroup}.1)]  we have, in the sense of distribution, $$X\uno_{E}\geq 0,\qquad \forall X\in W;$$
%\item[(\ref{monotone_subgroup}.2)]  for  a right-invariant Riemannian distance $\rho$ on $
%\G$, we have  the set inclusion   $$  \tilde E^\rho \cdot S_W \subseteq   \tilde E^\rho.$$ 
%\end{itemize}

\begin{proposition}\label{monotone_semigroup2} Let $\G$ be a Carnot group, $E\subseteq \G$ measurable, and $W\subseteq {\rm Lie}(\G)$ a subset such that $\R_+W=W$. 
If for all $X\in W$ the set $E$ is $X$-monotone,
%$$X\uno_{E}\geq 0,\qquad \
then there exists a representative $\tilde E$ of $E$  such that $  \tilde E $ has the $S_W $-cone property (one may take the one in \eqref{def:tilde:E} with respect to any  right-invariant doubling distance $\rho$);
 
\end{proposition}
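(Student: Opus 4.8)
The plan is to take as representative exactly the Lebesgue $\rho$-representative $\tilde E^\rho$ from \eqref{def:tilde:E} and to reduce the $S_W$-cone property to precise monotonicity in each direction of $W$ via Proposition~\ref{monotone_semigroup}. Since $(\G,\rho,\vol)$ is doubling, the Lebesgue--Besicovitch theorem guarantees that $\tilde E^\rho$ is a representative of $E$, so the only thing left to check is that $\tilde E^\rho$ enjoys the $S_W$-cone property. By Proposition~\ref{monotone_semigroup}, applied with $\tilde E^\rho$ in place of $E$, this is equivalent to the assertion that $\tilde E^\rho$ is precisely $X$-monotone for every $X\in W$.

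First I would fix an arbitrary $X\in W$. By hypothesis $E$ is $X$-monotone, that is, condition (\ref{monotone_direction}.1) holds for this $X$. Applying Proposition~\ref{monotone_direction} to the left-invariant vector field $X$ together with the fixed right-invariant doubling distance $\rho$, the implication (\ref{monotone_direction}.1)~$\implies$~(\ref{monotone_direction}.3) yields
$$\tilde E^\rho \exp(tX)\subseteq \tilde E^\rho,\qquad \forall t>0,$$
which is precisely the statement that $\tilde E^\rho$ is precisely $X$-monotone. Since $X\in W$ was arbitrary, $\tilde E^\rho$ is precisely $X$-monotone in every direction of $W$, and Proposition~\ref{monotone_semigroup} then delivers the $S_W$-cone property.

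The main---and essentially only---point requiring care is the \emph{uniformity of the representative over all directions} $X\in W$: a single set must be simultaneously precisely $X$-monotone for every $X\in W$, and $W$ is typically infinite. This is exactly why the argument must go through condition (\ref{monotone_direction}.3) rather than the a priori weaker (\ref{monotone_direction}.4). Indeed, (\ref{monotone_direction}.4) would only provide, for each individual $X$, \emph{some} representative depending on $X$, and there is no reason for these to be compatible across an uncountable family, nor for their common refinement to remain a representative. By contrast, (\ref{monotone_direction}.3) pins down the canonical representative $\tilde E^\rho$, which depends only on $E$ and $\rho$ and not on $X$; hence the very same set $\tilde E^\rho$ works for all $X\in W$ at once. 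Once this observation is in place, the proof is immediate.
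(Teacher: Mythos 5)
Your proof is correct and follows essentially the same route as the paper: both take the Lebesgue representative $\tilde E^\rho$ for a right-invariant doubling distance, apply the implication (\ref{monotone_direction}.1)$\implies$(\ref{monotone_direction}.3) of Proposition~\ref{monotone_direction} to get precise $X$-monotonicity of the \emph{same} set $\tilde E^\rho$ for every $X\in W$, and then pass to the $S_W$-cone property by decomposing elements of $S_W$ as products $\Pi_{i}\exp(w_i)$ (you cite Proposition~\ref{monotone_semigroup} for this last step, the paper writes the iteration out explicitly, which is the same argument). Your remark on why one must use (\ref{monotone_direction}.3) rather than (\ref{monotone_direction}.4) -- namely that the representative must be uniform over the uncountable family of directions -- correctly identifies the one subtle point.
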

\proof  Assume $X\uno_{E}\geq 0$ for all $X
\in W$ and take $\Pi_{i=1}^k \exp(w_i)$ an arbitrary point in $S_W$, for some $k\in \N$, and $   w_1,\ldots, w_k\in W.$  Then Proposition~\ref{monotone_direction} implies
$  \tilde E^\rho  \exp(w_1)\cdots  \exp(w_k)\subseteq   \tilde E^\rho$, which is the $S_W $-cone property.
\qed

We rephrase the previous result in   case $W$ is a horizontal half-space.

\begin{corollary}\label{monotone_subgroup_Carnot} 
Let $\G$ be a Carnot group and $E\subset \G$ a measurable set. Fix a scalar product on the first stratum $V_1$   of the Lie algebra of $\G$.
Given  $X\in V_1$, set $W=W_X$ to be the orthogonal space to $X$ in $V_1$.
Let $S_W$ be the subgroup generated by $W$.
%Then 
%\begin{itemize}
%\item[(\ref{monotone_subgroup}.i)] $S_W$ is a cone with respect to the dilations $\delta_\lambda$, $\lambda\in \R_+$,
%\item[(\ref{monotone_subgroup}.ii)] $ {S_W}$ has nonempty interior,  
%\item[(\ref{monotone_subgroup}.iii)]  $\overline{S_W}=\overline{\Int (S_W)}$
%\end{itemize}
The following are equivalent:
\begin{itemize}
\item[(1)] $E$ has %finite perimeter and 
constant normal equal to $X$,
\item[(2)]    $E$ has a representative $\tilde E$ such that    $  \tilde E \cdot S_W \subseteq   \tilde E.$
\end{itemize}
\end{corollary}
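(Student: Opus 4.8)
The plan is to recognize this corollary as the horizontal half-space instance of Propositions~\ref{monotone_semigroup2} and~\ref{monotone_semigroup} (exactly as announced in the sentence preceding the statement), and to use Proposition~\ref{monotone_direction} to translate between the distributional and the set-theoretic forms of monotonicity. The first thing I would record is that, by the definition of the constant normal given in the introduction, assertion~(1) is nothing but the requirement that $E$ be $Y$-monotone, that is $Y\uno_E\geq 0$ in the sense of distributions, for every $Y$ ranging over the closed half-space $W=W_X$ with inner normal $X$. This set satisfies $\R_+W=W$, so the hypotheses of both Proposition~\ref{monotone_semigroup2} and Proposition~\ref{monotone_semigroup} are in force, and $S_W$ is exactly the conical semigroup from Corollary~\ref{semigroup_Canot} attached to it.

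For the implication (1)$\Rightarrow$(2), I would feed the reformulation above directly into Proposition~\ref{monotone_semigroup2}: since $E$ is $Y$-monotone for all $Y\in W$ and $\R_+W=W$, that proposition yields a representative $\tilde E$ of $E$ — concretely the Lebesgue representative $\tilde E^\rho$ defined in~\eqref{def:tilde:E} for any right-invariant doubling distance $\rho$ — enjoying the $S_W$-cone property $\tilde E\cdot S_W\subseteq\tilde E$. This is precisely assertion~(2), so this direction is immediate once the definition of constant normal has been unpacked.

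For the converse (2)$\Rightarrow$(1), I would start from a representative $\tilde E$ of $E$ with $\tilde E\cdot S_W\subseteq\tilde E$. By the equivalence in Proposition~\ref{monotone_semigroup}, this $S_W$-cone property is the same as saying that $\tilde E$ is precisely $Y$-monotone for every $Y\in W$, i.e. $\tilde E\exp(tY)\subseteq\tilde E$ for all $t>0$. Fixing such a $Y$, this inclusion is exactly condition~(\ref{monotone_direction}.4) of Proposition~\ref{monotone_direction} applied with the representative $E'=\tilde E$; the implication (\ref{monotone_direction}.4)$\Rightarrow$(\ref{monotone_direction}.1) then gives $Y\uno_E\geq 0$, so $E$ is $Y$-monotone. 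Letting $Y$ vary over all of $W=W_X$ returns exactly the statement that $E$ has constant normal $X$, which is assertion~(1).

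I do not expect a serious obstacle, since the corollary is in essence a repackaging of the two preceding propositions; the only point that wants a little care is the bookkeeping of representatives. This is harmless because assertion~(2) is existential: in the forward direction it suffices to exhibit the single good representative produced by Proposition~\ref{monotone_semigroup2}, while in the backward direction one may accept an arbitrary representative $\tilde E$, and the passage from the set-inclusion statement for $\tilde E$ to the distributional statement for $E$ is legitimate precisely because $\tilde E$ and $E$ coincide up to a $\vol$-null set. The conceptual content that makes everything fit is simply that the definition of the constant normal converts the whole monotone family $\{Y\in W_X\}$ into the single direction $X$.
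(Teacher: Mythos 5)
Your proof is correct and follows exactly the route the paper intends: the paper offers no separate argument for this corollary, presenting it as a rephrasing of Proposition~\ref{monotone_semigroup2} (which gives (1)$\Rightarrow$(2) via the right-invariant Lebesgue representative), with the converse supplied by Proposition~\ref{monotone_semigroup} together with the implication (\ref{monotone_direction}.4)$\Rightarrow$(\ref{monotone_direction}.1) of Proposition~\ref{monotone_direction}, just as you argue. Your reading of $W_X$ as the closed horizontal half-space with inner normal $X$ (rather than the literal orthogonal complement, as the paper's wording loosely suggests) is the interpretation consistent with the paper's definition of constant normal, so your bookkeeping is the intended one.
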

%\proof Ovvia. Lasciamo o togliamo i ii iii? Spezzare 1 e 2 in due enunciati? \qed

\begin{remark}\label{rmk:constant_precise}
 
As a consequence of Corollary~\ref{monotone_subgroup_Carnot} and Proposition~\ref{monotone_semigroup}, we have that if a set   has constant normal, then it is  has a representative that has precisely constant normal. One such representative is 
obtained when in   \eqref{def:tilde:E} one takes  any  right-invariant doubling distance $\rho$.
 
\end{remark}

\subsection{Topological consequences of   the cone property}

%Precisely constant-normal sets are open and connected.

In the following, for a set $E$ in a Carnot group $\G$, 
we say that  its {\em Carnot-Lebesgue representative} is the set 
%$\tilde E$ 
given by \eqref{def:tilde:E} where one chooses as $\rho$ a Carnot (left-invariant) distance. Namely, the Carnot-Lebesgue representative of $E$ is the set
\begin{equation}
\label{def:tilde:E:Carnot}
  \tilde E =\tilde E_{\rm Carnot}:=\left\{x\in \G:
  \lim_{r \to 0^+} \frac{\vol(B_\rho(x,r) \cap E)}{\vol(B_\rho(x,r))} =1\right\}.
\end{equation}
%$\tilde E^L$ stands for the Lebesgue representative (of $E$) with respect to a left invariant subRiemannian distance $\rho_L$ (used in the notion of $B_{\rho_L}(p,r)$, the ball centred at $p$ with radius $r$) and with respect to the (left- and right- invariant) measure $\vol$. The sets $E$ and $\tilde{E}^L$ coincide $\vol$-almost everywhere (Lebesgue-Besicovitch Differentiation Theorem). 

\begin{lemma}\label{lem:CL:open}
If a set $E$ has the $C$-cone property with  respect to an open cone $C$, 
%$\Int(C)\neq \emptyset$ ,
 then its Carnot-Lebesgue representative $\tilde E$ is open and has the $C$-cone property. 
% (Moreover, $\tilde E^L$ has the ``cone property'' with respect to $\Int(C)$; alternatively, if $C$ is open, then $\tilde E^L$ has the ``cone property'' with respect to $C$.)
\end{lemma}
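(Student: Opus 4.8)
The plan is to prove the stronger identity $\tilde E=\Int(E)$, from which both assertions follow at once: $\Int(E)$ is open, and the cone property of $\tilde E$ reduces to the elementary fact that $\Int(E)\cdot C\subseteq\Int(E)$. Indeed $\Int(E)\cdot C\subseteq E\cdot C\subseteq E$ by the cone property of $E$, while $\Int(E)\cdot C=\bigcup_{c\in C}\Int(E)\,c$ is open, being a union of right translates of an open set, and therefore lands in $\Int(E)$. Thus once the identity is established there is nothing more to do.

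One inclusion is immediate: if $p\in\Int(E)$ then some ball $B_\rho(p,\eps)$ lies in $E$, so $p$ has density $1$ and $p\in\tilde E$; hence $\Int(E)\subseteq\tilde E$. The content of the lemma is the reverse inclusion $\tilde E\subseteq\Int(E)$, which I would establish by contradiction. Suppose $x\in\tilde E$ but $x\notin\Int(E)$. Then every ball about $x$ meets the complement, so for each $r>0$ I may pick $y_r\in B_\rho(x,r)\cap E^c$. By Remark~\ref{cone_prop_complement} the set $E^c$ has the $C^{-1}$-cone property, whence $y_r\cdot C^{-1}\subseteq E^c$. I now exploit that $C^{-1}$ is again an open cone (inversion is a homeomorphism and $\delta_t(C^{-1})=(\delta_t C)^{-1}=C^{-1}$): fixing $k_0\in C^{-1}$ and $\beta>0$ with $B_\rho(k_0,\beta)\subseteq C^{-1}$, and writing $\ell_0:=\rho(1_\G,k_0)$, the relations $\rho(\delta_t a,\delta_t b)=t\,\rho(a,b)$ and $\rho(ga,gb)=\rho(a,b)$ give
\[
y_r\,\delta_t\big(B_\rho(k_0,\beta)\big)=B_\rho\big(y_r\,\delta_t k_0,\;t\beta\big)\subseteq y_r\,C^{-1}\subseteq E^c .
\]
Choosing $t=r$ produces a ball of radius $\beta r$ contained in $E^c$ whose center lies within $\rho(x,y_r)+\rho(y_r,y_r\delta_r k_0)\le r+\ell_0 r$ of $x$; hence it sits inside $B_\rho(x,R_r)$ with $R_r:=(1+\ell_0+\beta)r$. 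Using that $\vol$ is bi-invariant and $Q$-homogeneous, so that $\vol(B_\rho(p,s))=c_0 s^Q$ for a fixed constant $c_0$ and the homogeneous dimension $Q$, I obtain
\[
\frac{\vol\big(B_\rho(x,R_r)\cap E^c\big)}{\vol\big(B_\rho(x,R_r)\big)}\ \ge\ \frac{c_0(\beta r)^Q}{c_0 R_r^Q}=\frac{\beta^Q}{(1+\ell_0+\beta)^Q}>0,
\]
a bound independent of $r$. Since $x\in\tilde E$, this ratio must tend to $0$ as $r\to0$, a contradiction. Therefore $x\in\Int(E)$, which yields $\tilde E\subseteq\Int(E)$ and hence $\tilde E=\Int(E)$.

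The step I expect to be the crux is precisely the construction of a ball inside $E^c$ of radius comparable to the scale $r$ and located within distance comparable to $r$ of $x$: this is what converts the purely topological failure ``$x\notin\Int(E)$'' into a quantitative lower bound on the density of $E^c$ at $x$. It rests on the scale invariance of the open cone $C^{-1}$ (it contains a ball of radius $\beta t$ at distance $\ell_0 t$ from the vertex for every $t>0$), together with the homogeneity of $\rho$ and $\vol$ under dilations and their left-invariance; the remaining verifications are routine bookkeeping.
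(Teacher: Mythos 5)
Your proof is correct. It rests on the same underlying mechanism as the paper's argument --- the open cone forces a definite fraction of volume, so a point off the interior cannot have density $1$ --- but the execution differs in two ways worth recording. For openness, the paper works at the point $p$ itself: if $p\notin\Int(\tilde E)$, the $C$-cone property of $E$ forces $E\cap pC^{-1}=\emptyset$ (any $q$ in that intersection would put $p$ inside the open set $qC\subseteq E$), and then left-invariance and homogeneity give at once
$\vol(B_\rho(p,r)\cap E)/\vol(B_\rho(p,r))\le 1-\vol(B_\rho(1_\G,1)\cap C^{-1})/\vol(B_\rho(1_\G,1))<1$
for every $r$, with no further construction. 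You instead pass to the complement via Remark~\ref{cone_prop_complement} and plant cones $y_rC^{-1}\subseteq E^c$ at nearby points $y_r$, which is what obliges you to extract the sub-ball $B_\rho(k_0,\beta)\subseteq C^{-1}$ and track the radius $R_r=(1+\ell_0+\beta)r$; the bookkeeping is correct (the dilation and invariance identities you invoke all hold for a Carnot distance and Haar measure), just a little longer than necessary. The genuinely different --- and cleaner --- part of your write-up is the second half: by isolating the identity $\tilde E=\Int(E)$, you get the $C$-cone property for free from the observation that $\Int(E)\cdot C$ is an open subset of $E$. The paper instead re-proves the cone property by approximating $p\in\tilde E$ by points $p_j\in E$ and using that $\bigcup_j p_jC\supseteq pC$ because $pC$ is open. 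Your identity is in fact implicit in the paper's dichotomy (every $p\in\tilde E$ must satisfy $E\cap pC^{-1}\neq\emptyset$ and hence lies in $\Int(E)$), but stating it explicitly streamlines the conclusion.
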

\begin{proof}

We shall show that for each point $p\notin \Int(\tilde E)$ the density of $E$ at $p$ is not 1 and so $p\notin  \tilde E $. 
%--- Recall Remark~\ref{cone_prop_complement} ---
%Up to multiplying on the left by $p^{-1}$ and recalling \eqref{left-inv-Lebes}, we may assume that $p=e$. 
%Also, 
%Up to taking a smaller cone, we may assume that $C$ is open.
We first claim that $E\cap  p C^{-1} $ is empty. Indeed, by contradiction, suppose that
 $q\in E\cap   p C^{-1}$. 
 On the one hand, since $q\in E$ and $E$ has the $C$-cone property, then $q C \subseteq E$.
 On the other hand, since  $q\in   p C^{-1}$, then 
% $p^{-1} q \in C^{-1}$, i.e, $ q^{-1}p \in C$.
$p   \in q C \subseteq E$.
%Therefore, we get that  
%   $p= q q^{-1}p  \in q C \subseteq E$. 
   This implies that 
$p$ is in an open set contained in $E$. Thus $p\in \Int(  E)$,  which is a contradiction with $p\notin \Int(\tilde E)$ since $\Int(  E) \subset \Int(\tilde E)$. We deduce that $E\cap p  C^{-1} $ is empty. 

As a consequence we claim  that  the density of $E$ at $p$ with respect to the Carnot distance $\rho$ is strictly less that 1. Indeed, using   left invariance and homogeneity, we have
%no more than
%$1- \vol(B_{\rho_L}(p,r) \cap C)$
%if $r$ is such that 
%$\vol(B_{\rho_L}(p,r) )=1$. WARNING!

$$ \hspace{-2.5cm}\frac{\vol(B_{\rho}(p,r) \cap E)}{\vol(B_{\rho}(p,r))} =
\frac{\vol(B_{\rho}(p,r) \cap E\cap p C^{-1})+\vol(B_{\rho}(p,r) \cap E\cap (p C^{-1})^c)}{\vol(B_{\rho}(p,r))}  
 $$

%$$= \frac{0+\vol(B_{\rho}(p,r) \cap E\cap (p C^{-1})^c)}{\vol(B_{\rho}(p,r))}$$
$$\leq \frac{ \vol(B_{\rho}(p,r)  \cap (p C^{-1})^c)}{\vol(B_{\rho}(p,r))} 
%$$= \frac{ \vol(B_{\rho}(e,r)  \cap (  C^{-1})^c)}{\vol(B_{\rho}(e,r))}$$
 = \frac{ \vol(B_{\rho}(1_\G,1)  \cap (  C^{-1})^c)}{\vol(B_{\rho}(1_\G,1))}  
 =1- \frac{\vol(B_{\rho}(1_\G,1) \cap C^{-1})  }{\vol(B_{\rho}(1_\G,1))} 
<1,$$
where in the last inequality we used that $C^{-1}$ has non-empty interior.
We showed that at every point $p\notin \Int(\tilde E)$ the density of $E$   is strictly less that 1. Therefore the set $\tilde E$ is open.

Regarding the $C$-cone property of $\tilde E$, for $p\in \tilde{E}$ let $p_j \to p$, $p_j \in E$ (such a sequence exists since the density of $E$ at $p$ is $1$).
Because of the $C$-cone property of $E$, we get that
   $p_j C \subset E$. 
%   Consequently, we have $p_j \Int(C) \subset E$ and 
Consequently, since  $p_j C$ is open and therefore in this set the density of $E$  is $1$, we have
     $p_j  C \subset \tilde{E}$. Therefore $\cup_{j\in \N} p_j C \subset \tilde{E}$; note moreover that $\cup_{j\in \N} p_j C \supset p C$ because $p C$ is open. These two inclusions show the cone property of $\tilde{E}$ with respect to $C$.
% , and in particular $p Int(C) \subset \overline{E}$. However $p Int(C)$ is open, so it must be contained in the interior of $E$. The inclusion $Int(  E) \subset Int(\tilde E)$ concludes the proof $\tilde{E}^L$ has the cone property with respect to $Int(C)$.
\end{proof}

For the next result recall that a distance on a topological space is called {\em admissible} if it gives the given topology. In particular, we can take Riemannian or sub-Riemannian distances on our Carnot group, without assuming a particular invariance.
% left invariant or right invariant. 
The key property is that for the Lebesgue representative $\tilde E$ of a set $E$ with respect to any such a distance, one has
$\Int(E) \subseteq \tilde E$.

\begin{lemma}\label{lem:closure_cone_prop}
Let $C\subseteq \G$ be  a cone in a Carnot group $ \G$ and $E\subseteq \G$ a set with the $C$-cone property.
Let $\tilde E$ be a Lebesgue representative of $E$ with respect to some admissible distance.
If $\tilde E$ is open  and $C$ is almost regular, then $\tilde E$ has the $\bar C$-cone property.
\end{lemma}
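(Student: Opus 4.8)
The plan is to first reduce to the open interior cone $\Int(C)$, and then to upgrade from $\Int(C)$ to $\bar C$ using the almost regularity of $C$ together with the openness of $\tilde E$. Before starting, I would record two facts about $C$: since $C$ is almost regular it has interior, so $\Int(C)\neq\emptyset$ (Remark~\ref{remark:regularly}.c), and $\Int(C)$ is an open cone (Remark~\ref{cone_prop_open}). Moreover, almost regularity says precisely that $\bar C=\Cl(\Int(C))$, a fact I will use at the very end. Since $\Int(C)\subseteq C$ and $E$ has the $C$-cone property, $E$ also has the $\Int(C)$-cone property.

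The first substantive step is to show that $\tilde E$ itself has the $\Int(C)$-cone property. This is the cone-property half of Lemma~\ref{lem:CL:open}, re-run for an arbitrary admissible distance. Fix $p\in\tilde E$. Because $\tilde E$ is a representative of $E$ and $p$ has density $1$, there is a sequence $p_j\to p$ with $p_j\in E$. For each $j$ the set $p_j\,\Int(C)$ is contained in $E$ by the $\Int(C)$-cone property of $E$, and it is open, hence $p_j\,\Int(C)\subseteq\Int(E)\subseteq\tilde E$ by the inclusion $\Int(E)\subseteq\tilde E$ recorded just before the statement. Since $p\,\Int(C)$ is open and $p_j\to p$, for every $y=pv$ with $v\in\Int(C)$ one has $p_j^{-1}y\to v\in\Int(C)$, so $y\in p_j\,\Int(C)$ for large $j$; thus $p\,\Int(C)\subseteq\bigcup_j p_j\,\Int(C)\subseteq\tilde E$. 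This shows $p\,\Int(C)\subseteq\tilde E$ for every $p\in\tilde E$.

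The main step is to pass from $\Int(C)$ to $\bar C$. Fix $p\in\tilde E$ and $c\in\bar C$; the goal is $pc\in\tilde E$. The naive idea is to take $c_k\in\Int(C)$ with $c_k\to c$, so that $pc_k\in\tilde E$ by the previous step and $pc=\lim pc_k$; but this only gives $pc\in\Cl(\tilde E)$, and $\tilde E$ is open rather than closed, so this argument fails. This is the crux of the proof. To circumvent it, I would exploit the room provided by openness: choose $r>0$ with $B_\rho(p,r)\subseteq\tilde E$. Since $\bar C=\Cl(\Int(C))$ there is $v\in\Int(C)$ arbitrarily close to $c$; set $x:=pcv^{-1}$. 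As $v\to c$ the point $x=pcv^{-1}$ tends to $p$ (by continuity of the group operations, which uses only that $\rho$ is admissible, no invariance being needed), so for $v$ close enough to $c$ we get $x\in B_\rho(p,r)\subseteq\tilde E$. Then $pc=xv\in x\,\Int(C)\subseteq\tilde E$ by the $\Int(C)$-cone property of $\tilde E$ established above. Hence $pc\in\tilde E$, which is exactly the asserted $\bar C$-cone property.

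The only delicate point is this last one: rather than taking a limit of interior directions ending at $pc$, one must instead \emph{back up} from the target point $pc$ along an interior direction $v^{-1}$ to a base point $x$ that still lies in the open set $\tilde E$, and then flow forward by $v\in\Int(C)$ to land back at $pc$. The openness of $\tilde E$ is what makes the backed-up base point $x$ admissible, and the almost regularity of $C$ is what guarantees an interior direction $v$ close enough to $c$ to keep $x$ inside $B_\rho(p,r)$.
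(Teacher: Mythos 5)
Your proof is correct. Both arguments hinge on the same central trick: approximate $c\in\bar C$ by $v\in\Int(C)$ and \emph{back up} from the target point $pc$ to $x=pcv^{-1}$, which tends to $p$ and therefore lands in the open set $\tilde E$ for $v$ close enough to $c$. Where you diverge from the paper is in how the forward step $x\mapsto xv$ is then justified. The paper stays at the level of $E$: it takes a neighborhood $U$ of $p$ inside $\tilde E$, a neighborhood $V\subseteq C$ of an approximant $c_j$ with $UV\ni p\bar c$, uses the density of $E\cap U$ in $U$ to get the identity $UV=(U\cap E)V$, and concludes $UV\subseteq E$ by the $C$-cone property of $E$, so that $p\bar c\in\Int(E)\subseteq\tilde E$. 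You instead first upgrade $\tilde E$ itself to have the $\Int(C)$-cone property — essentially re-running the second half of Lemma~\ref{lem:CL:open} for an arbitrary admissible distance, which is a genuine (if small) addition since that lemma is only stated for the Carnot--Lebesgue representative — and then the back-up step finishes in one line via $pc=xv\in x\,\Int(C)\subseteq\tilde E$. Your decomposition is more modular and avoids the density manipulation $UV=(U\cap E)V$; the paper's version yields the marginally stronger conclusion that $p\bar c$ is an interior point of $E$ itself, though this extra information is not needed for the statement.
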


\begin{proof}
Fix $p\in \tilde E$ and $\bar c\in \bar C$. We want to show that $p\bar c\in \Int(E)$.
Since $C$ is almost regular, there exists a sequence $c_j\in \Int(C)$ converging to $c$.
Since $\tilde E$ is open there exists an open neighborhood $U$ of $p$ contained in $\tilde E$.
Since $\tilde E$ is a Lebesgue representative of $E$, then necessarily $E\cap U$ is dense in $U$.

Notice that since $c_j\to \bar c$ then $p\bar c c_j^{-1}\to p$. Hence, 
since $U $ is a neighborhood of $p $, we have that for $j$ large enough
$p\bar c c_j^{-1}$ is in $U$; equivalently, we have that 
$U c_j$ is a neighborhood of $p\bar c$. Fix one such $j$ and take an open neighborhood $V$ of $c_j$ such that 
$V\subseteq C$ and $UV$ is a neighborhood of $p\bar c$.

We now focus on the set $UV$, which we claim to be equal to $(U\cap E)V$. Regarding the nontrivial containment of this equivalence, take $uv$ with $u\in U$ and $v\in V.$ Since $V$ is open and $U\cap E$ is dense in $U$ there exists $u'\in U\cap E$ close to $u$ such that $(u')^{-1}uv \in V$. Thus
 for some  $v'\in V$, we have $uv =u'v'\in UV$. Hence, the equivalence $UV=(U\cap E)V$ is proved.
 
 Since $E$ has the $C$-cone property and $V\subseteq C$, we that that
 $$UV=(U\cap E)V \subseteq E. $$
 We infer that $UV$ is an open set containing  $p\bar c$ that is contained in $E$. Therefore  $p\bar c \in \Int(E)$. Consequently,  we come to the conclusion that 
$p\bar c \in  \tilde  E$. 
\end{proof}

\begin{lemma}\label{lem:reg:open}
 
Let $C\subseteq \G$ be  a   cone  in a Carnot group $ \G$ and $E\subseteq \G$ a set with the $C$-cone property.
 If $  E$ and $C$ are open, then $  E$ is regularly open.
\end{lemma}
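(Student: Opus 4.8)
The statement to prove is the equality $E = \Int(\Cl(E))$. One inclusion is automatic: since $E$ is open, $E = \Int(E) \subseteq \Int(\Cl(E))$. The plan is therefore to establish only the reverse inclusion $\Int(\Cl(E)) \subseteq E$, and the whole argument will rest on producing, for each $p \in \Int(\Cl(E))$, a point $q \in E$ together with some $c \in C$ such that $p = qc$; the inner cone property $E\cdot C \subseteq E$ then gives $p = qc \in E$ at once.

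First I would record a preliminary fact about the location of the identity relative to $C$. Since $C$ is a nonempty cone, picking any $c_0 \in C$ and letting $t \to 0^+$ in the relation $\delta_t c_0 \in \delta_t(C) = C$ shows $1_\G \in \Cl(C)$; applying the inversion homeomorphism (which fixes $1_\G$) then gives $1_\G \in \Cl(C^{-1})$. Consequently, for any $p$, left translation by $p$ yields $p \in \Cl(pC^{-1})$, so that \emph{every} neighborhood of $p$ meets the open set $pC^{-1}$.

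Next, I would fix $p \in \Int(\Cl(E))$ and choose an open neighborhood $U$ of $p$ with $U \subseteq \Cl(E)$. The set $O := U \cap pC^{-1}$ is open, and it is nonempty because $p \in \Cl(pC^{-1})$ while $U$ is a neighborhood of $p$. The key step is then the observation that a nonempty open set contained in $\Cl(E)$ must intersect $E$: indeed, if $O \cap E = \emptyset$ then openness of $O$ forces $O \subseteq \Int(E^c) = (\Cl E)^c$, contradicting $O \subseteq \Cl(E)$ unless $O$ is empty. Hence there exists $q \in E \cap pC^{-1}$; writing $q = pc^{-1}$ with $c \in C$ gives $p = qc$, and the cone property concludes that $p \in E$.

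The argument is essentially elementary, and I do not anticipate a serious obstacle. The one point requiring care—which I would flag as the crux—is the density-type step that an open subset of $\Cl(E)$ must meet $E$, combined with the bookkeeping needed to realize the target $p$ as $qc$ with $q \in E$ and $c \in C$ (in that order), so that the inner cone property $E \cdot C \subseteq E$ applies on the correct side; writing $q \in pC^{-1}$ rather than directly searching for $c$ is precisely what keeps this bookkeeping transparent.
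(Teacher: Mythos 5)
Your proof is correct and takes essentially the same route as the paper's: both arguments pick a point $q\in U\cap pC^{-1}\cap E$ inside a neighborhood $U\subseteq \Cl(E)$ of $p$ and conclude $p\in qC\subseteq E$ from the cone property. Your explicit verification that $U\cap pC^{-1}$ is nonempty (via $1_\G\in\Cl(C^{-1})$, using that $C$ is a cone) fills in a step the paper leaves implicit, but it is the same argument.
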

\begin{proof}
Since $E$ is open we have $ E \subseteq \Int ( \Cl (E) )$.  To prove the opposite inclusion, take $p\in \Int ( \Cl (E)) $. Then there exists an neighborhood $U$ of $p$ such that $U\subset \Cl(E)$. Since $C$ is open, so is $C^{-1}$, and since $E$ is dense in $U$,  we can take $q\in U\cap pC^{-1} \cap E$. By the $C$-cone property we conclude that
$$p\in q C \subset E.$$
Therefore, we get $ E = \Int ( \Cl (E)) $.
\end{proof}

\subsection{Proofs of Theorems~\ref{main:thm}, \ref{thm1}, and \ref{thm2}}
%{Topological property of precisely constant-normal sets}
Here we complete the proofs of Theorems~\ref{thm1}, \ref{thm2}, and \ref{main:thm}, respectively.

\begin{proof}[Proof of Theorem~\ref{thm1}]
We claim that the proof follows from Proposition~\ref{monotone_semigroup}. Indeed, if 
$E$ is a precisely constant-normal set with respect to some closed half-space $W\subset V_1$, then Proposition~\ref{monotone_semigroup} implies that $E$ has the cone property with respect to the semigroup $S_W$ generated by $W$. From Corollary~\ref{semigroup_Canot} we have that $S_W$ is a cone with interior. In Remarks~\ref{cone_prop_open} and \ref{rmk:int:semigroup}, we observed  that interior of a conical semigroup with interior is an open conical semigroup. Obviously we also have that $E$ has the cone property with respect to the interior $\Int(S_W)$.
Being $\Int(S_W)$ a conical semigroup, from Lemma~\ref{lem:contractible} we have that it is contractible.
\end{proof}

\begin{proof}[Proof of Theorem~\ref{thm2}]
By Lemma~\ref{lem:CL:open} we have that if a set $E$ has the  cone property with  respect to an open cone $C$, 
 then its Carnot-Lebesgue representative $\tilde E$ is open and has the $C$-cone property. 
 From Lemma~\ref{lem:reg:open} we have that
$\tilde E$ is regularly open, since $\tilde E$ and $C$ are open.
 In Remark~\ref{cone_prop_complement} we noticed that consequently we have that the complement  $\tilde E^c$ of $\tilde  E$ has the cone property with respect to $C^{-1}$, which is also an open cone.
 From Proposition~\ref{prop:contractible} we infer that both $\tilde E$ and $\tilde E^c$ are contractible.
\end{proof}

We shall next prove a stronger version of Theorem~\ref{main:thm}.
\begin{theorem}
\label{main:thm2}
 Let $E$ be a subset in a Carnot group that has constant normal   with respect to a horizontal half-space $W$. Let   $S_W $ be the semigroup generated by $W$, which is a conical semigroup with interior.
  Then the Carnot-Lebesgue representative  of $E$ has the $\Cl(S_W) $-cone property, 
  is regularly open, is contractible, and its complement is contractible.
\end{theorem}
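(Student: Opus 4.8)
The plan is to assemble Theorem~\ref{main:thm2} directly from the lemmas and propositions already established in this section, tracking the representative $\tilde E$ through each property claimed in the statement. The setup gives us a set $E$ with constant normal with respect to a horizontal half-space $W$, and $S_W$ is the semigroup it generates. By Corollary~\ref{semigroup_Canot}, $S_W$ is a conical semigroup with nonempty interior and is almost regular, so the hypotheses of the later lemmas are available. I will work with the Carnot-Lebesgue representative $\tilde E$ defined in \eqref{def:tilde:E:Carnot}, since that is precisely the object whose topological properties the theorem asserts.

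First I would obtain the cone property for the open interior cone. Set $C := \Int(S_W)$, which is an open cone (it is nonempty by Remark~\ref{remark:regularly}.c applied to the almost regular $S_W$, and Remark~\ref{cone_prop_open} confirms it is a cone). By Corollary~\ref{monotone_subgroup_Carnot}, condition (1) is equivalent to $E$ having \emph{some} representative with the $S_W$-cone property, hence in particular with the $C$-cone property. I would then invoke Lemma~\ref{lem:CL:open} with this open cone $C$: it yields that the Carnot-Lebesgue representative $\tilde E$ is \emph{open} and itself has the $C$-cone property. This is the crucial step, because all subsequent properties hinge on $\tilde E$ being open and carrying a cone property with respect to an open cone.

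Next I would upgrade the cone from $C$ to $\Cl(S_W)$. Since $S_W$ is almost regular (Corollary~\ref{semigroup_Canot}), we have $\Cl(S_W) = \Cl(\Int(S_W)) = \Cl(C)$, so $\Cl(S_W) = \bar C$. Because $\tilde E$ is open and $C$ is almost regular, Lemma~\ref{lem:closure_cone_prop} applies and gives that $\tilde E$ has the $\bar C$-cone property, i.e. the $\Cl(S_W)$-cone property. This establishes the first assertion of the theorem. For regular openness, I would apply Lemma~\ref{lem:reg:open}: both $\tilde E$ (just shown open) and $C$ are open, and $\tilde E$ has the $C$-cone property, so $\tilde E$ is regularly open.

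Finally, for contractibility of $\tilde E$ and its complement, I would use Proposition~\ref{prop:contractible}. Since $\tilde E$ has the $C$-cone property and $C = \Int(S_W) \neq \emptyset$ has nonempty interior, Proposition~\ref{prop:contractible} directly gives that $\tilde E$ is contractible. For the complement, Remark~\ref{cone_prop_complement} shows that $\tilde E^c$ has the $C^{-1}$-cone property, and $C^{-1}$ is again an open cone (the inverse of an open cone is open and is a cone), so Proposition~\ref{prop:contractible} applies once more to conclude that $\tilde E^c$ is contractible. The main conceptual obstacle is not any single step but ensuring the logical bookkeeping is airtight: one must confirm at the outset that $\Int(S_W)$ is nonempty before invoking the open-cone lemmas, and one must carefully use the almost regularity of $S_W$ to identify $\Cl(S_W)$ with $\bar C$ so that the closed cone property matches the theorem's statement exactly.
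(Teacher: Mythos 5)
Your proposal is correct and follows essentially the same route as the paper: obtain a representative with the $S_W$-cone property via Corollary~\ref{monotone_subgroup_Carnot}, pass through Lemma~\ref{lem:CL:open} to get that the Carnot-Lebesgue representative is open with the $\Int(S_W)$-cone property, upgrade to the $\Cl(S_W)$-cone property via almost regularity and Lemma~\ref{lem:closure_cone_prop}, and conclude with Lemma~\ref{lem:reg:open}, Remark~\ref{cone_prop_complement}, and Proposition~\ref{prop:contractible}. The only cosmetic difference is that the paper cites Theorem~\ref{thm2} for the last three properties, whereas you unpack its proof inline.
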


\begin{proof}[Proof of Theorem~\ref{main:thm2} and hence of Theorem~\ref{main:thm}]
In this proof we shall consider two representatives of the  given   set $E$. 
When applying Proposition~\ref{monotone_semigroup2} we shall take a
representative $\tilde E_{\rm right}$ of $E$  
obtained when in   \eqref{def:tilde:E} one takes  any  right-invariant doubling distance $\rho$; 
successively, we shall consider the Carnot-Lebesgue representative $\tilde E_{\rm Carnot}$ as in \eqref{def:tilde:E:Carnot}.

From Remark~\ref{rmk:constant_precise} (see also Proposition~\ref{monotone_semigroup2} and Corollary~\ref{monotone_subgroup_Carnot}) we have that, since  $E$ has constant normal, then  
   $  \tilde E_{\rm right} $ has the cone property  with respect to  the   semigroup  $S_W $ generated by some half-space $W\subset V_1$. Let $C_W:=\Int(S_W)$. From Corollary~\ref{semigroup_Canot}, and Remarks~\ref{cone_prop_open} and \ref{rmk:int:semigroup}, recall that we have that both $S_W $ and $C_W $
     are conical semigroups and they have interior, that is $C_W\neq \emptyset$.  
From Lemma~\ref{lem:CL:open} we deduce that, since $  \tilde E_{\rm right} $ has the cone property  with respect to  the    open cone $C_W$,  then the Carnot-Lebesgue representative $\tilde E_{\rm Carnot}$ is open and has the $C_W$-cone property. 
In Lemma~\ref{lem:almost regular}, we proved that both $S_W $ and $C_W $ are
almost regular.
 We claim that $\tilde E_{\rm Carnot}$ has the cone property with respect to the closure of $S_W$.
Indeed,
 from Lemma~\ref{lem:closure_cone_prop} we have that $\tilde E_{\rm Carnot}$ has the cone property with respect to the closure of $C_W$.
 Since $S_W$ is almost regular, then 
 $$  \Cl(S_W) = \Cl (\Int (S_W)) =\Cl  (C_W)   .$$
Then $\tilde E_{\rm Carnot}$ has the $\Cl(S_W)$-cone property.
Applying Theorem~\ref{thm2} with the fact that  $\tilde E_{\rm Carnot}$ has the $C_W$-cone property, we get the rest of the claimed properties.
 \end{proof}

\section{Consequences on density and boundaries}
Let $\G$ be a Carnot group with a fixed Haar measure $\vol$ and a fixed Carnot distance $\rho$. As it is standard in Geometry Measure Theory, given a measurable set $E\subseteq \G$ we define the lower density and  upper density of $E$ at $x$, respectively as
\begin{equation}
\label{def:density}
 \Theta_*(x,E):=
  \liminf_{r \to 0^+} \frac{\vol(B_\rho(x,r) \cap E)}{\vol(B_\rho(x,r))}   
  \qquad\text{and}\qquad
 \Theta^*(x,E):=
  \limsup_{r \to 0^+} \frac{\vol(B_\rho(x,r) \cap E)}{\vol(B_\rho(x,r))} 
  .\end{equation}

If $E\subseteq\G$, as it is usual in the literature, in Section~\ref{sec:boundaries} we define the {\em measure theoretic boundary} 
$\partial_{\rm mt}E$,
% $\partial_*E$, FSSC notation
also called  {\em essential boundary}, and the {\em   De Giorgi's reduced boundary} 
$\partial_{\rm DG}E$,
% $\partial^*E$, FSSC notation
% $\partial_*E$, ambrosio notation
also denoted by $\mathscr F E$. %, as follows.

The aim of this section is to show that if $E$ is the Carnot-Lebesgue representative of a constant normal set, then the are global density estimates (see Proposition~\ref{prop:densities}) and these boundaries coincide with the topological boundary (see Proposition~\ref{prop:boundaries:coincide}).

\subsection{Various kinds of boundaries}\label{sec:boundaries}
\begin{definition}[$\partial_{\rm mt}E$] \label{mt_boundary}
 The {\em measure theoretic boundary} of $E$ is the set of points where the volume density of $E$ is neither 0 nor 1. Namely, 
 $$\partial_{\rm mt}E := \{ x\in\G :  \Theta_*(x,E)\neq 1 \text{ or }  \Theta^*(x,E)\neq0\}.$$
 \end{definition}

 In what follows, given a measurable subsets $E$ of a Carnot group $\G$, we denote by $\uno_E$ its characteristic function, which is in the space $L^1_{\rm
loc}(\G)$ of locally integrable functions. We denote by ${\mathcal M}(\G)$ the space of Radon real-valued measures on $\G$. 
 
 \begin{definition}[Sets of locally finite perimeter]
 
A Borel subsets $E\subset\G$  of a Carnot group $\G$ has \emph{locally finite perimeter} if
$X\uno_E\in{\mathcal M}(\G)$ is a Radon measure for any $X\in V_1$.
Fixing a basis $X_1, \ldots, X_m$ of $V_1$
%, for $f\in L^1_{\rm loc}(\G)$ with $X_if\in{\mathcal M}(\G)$ 
we can define the
$\R^m$-valued Radon measure
\begin{equation}\label{dchiE}
D\uno_E:=(X_1\uno_E ,\ldots,X_m\uno_E).
\end{equation}
We call the total variation\footnote{  Recall that the
%$\R^m$-valued Radon measures, representable as $(\mu_1,\ldots,\mu_m)$ with $\mu_i\in{\mathcal M}(\G)$. The
\emph{total variation}  $|\mu|$ of an $\R^m$-valued measure $\mu=(\mu_1,\ldots,\mu_m)$ with $\mu_i\in{\mathcal M}(\G)$
is the smallest nonnegative measure $\nu$ defined on Borel sets of
$\G$ such that $\nu(B)\geq |\mu(B)|$ for all bounded Borel set $B$;
it can be explicitly defined by
$$
|\mu|(B):=\sup\left\{\sum_{i=1}^\infty |\mu(B_i)|:\ \text{$(B_i)$
Borel partition of $B$, $B_i$ bounded}\right\}.$$}
$|D\uno_E|$ of $D\uno_E$ the {\em perimeter measure} of $E$.
\end{definition}

 \begin{definition}[$\partial_{\rm DG}E$] \label{DG_boundary}
Let $E\subseteq \G$ be a set of locally finite perimeter of a Carnot group $\G$. We define the {\em De Giorgi's reduced boundary}
  $\partial_{\rm DG}E$ of $E$ as the set of points $x\in {\rm supp\,}|D\uno_E|$
where:
\begin{itemize}
\item[(i)] the limit $\nu_E(x)=(\nu_{E,1}(x),\ldots,\nu_{E,m}(x)):=\displaystyle{
\lim\limits_{r\downarrow 0}\frac{D\uno_E(B_r(x))}{|D\uno_E|(B_r(x))}}$ exists;
\item[(ii)] $|\nu_E(x)|=1$.
\end{itemize}
\end{definition}
 
 \begin{remark}\label{remark:normalratioconstant}
 If $E\subset \mathbb{G}$ is a constant-normal set with normal $X$, then $\frac{D\uno_E(B_r(x))}{|D\uno_E|(B_r(x))}$ is equal to $X$ for every $x\in {\rm supp\,}|D\uno_E|$ and for every $r$. In particular we have, for every $x\in {\rm supp\,}|D\uno_E|$, that $\nu_E$ exists (equal to $X$) and $|\nu_E(x)|=1$. 
  \end{remark}

 \begin{proposition}\label{prop:boundaries:coincide}
 Let $E\subset\G$ be  measurable subset of a Carnot group $\G$.
 \begin{enumerate}
 \item we have $
\partial_{\rm mt}E\subset\partial E   \qquad\text{and}\qquad \partial_{\rm DG}E\subset\partial E$.
\item If $E$ has the $C$-cone property with $\Int(C)\neq\emptyset$, then  $\partial E =\partial_{\rm mt}E$.
\item If $E$ is the Carnot-Lebesgue representative (as in \eqref{def:tilde:E:Carnot}) of a  constant normal set, then  $\partial E= \partial_{\rm DG}E =\partial_{\rm mt}E = {\rm supp\,}|D\uno_E|$. 
 \end{enumerate}

 \end{proposition}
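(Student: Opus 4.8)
The plan is to prove the three items of Proposition~\ref{prop:boundaries:coincide} in order, exploiting the machinery already developed, with particular reliance on Theorem~\ref{main:thm2} and Remark~\ref{remark:normalratioconstant}. For item (1), the two inclusions are essentially tautological consequences of the definitions: a point in the interior of $E$ has density $1$ and a point in the interior of $E^c$ has density $0$, so any point where the density is neither $0$ nor $1$ must lie in $\partial E$, giving $\partial_{\rm mt}E\subseteq\partial E$; and since $\mathrm{supp}\,|D\uno_E|$ is contained in the measure-theoretic boundary (a standard fact: away from $\partial_{\rm mt}E$ the set is locally equivalent to $\emptyset$ or $\G$, so $|D\uno_E|$ vanishes there), we also get $\partial_{\rm DG}E\subseteq\partial_{\rm mt}E\subseteq\partial E$. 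I would state these briefly without belaboring them.

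For item (2), I must upgrade $\partial_{\rm mt}E\subseteq\partial E$ (already known) to equality by proving $\partial E\subseteq\partial_{\rm mt}E$. The engine here is Proposition~\ref{prop:densities}, whose proof I would invoke in the form already sketched in Lemma~\ref{lem:CL:open}: the cone property with respect to a cone $C$ having nonempty interior forces uniform two-sided density bounds at boundary points. Concretely, if $p\in\partial E$, then $p\notin\Int(E)$, and the computation in Lemma~\ref{lem:CL:open} shows the upper density of $E$ at $p$ is strictly below $1$ (because $E\cap pC^{-1}=\emptyset$ consumes a fixed fraction of every ball, by left-invariance and homogeneity of the Carnot distance). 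Symmetrically, applying Remark~\ref{cone_prop_complement} the complement $E^c$ has the $C^{-1}$-cone property, so at $p\in\partial E=\partial E^c$ the density of $E^c$ is likewise bounded away from $1$, meaning the lower density of $E$ is bounded away from $0$. Hence $\Theta_*(p,E)>0$ and $\Theta^*(p,E)<1$, so $p\in\partial_{\rm mt}E$. This yields $\partial E=\partial_{\rm mt}E$.

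For item (3), I would assemble the chain $\partial E=\partial_{\rm mt}E=\partial_{\rm DG}E=\mathrm{supp}\,|D\uno_E|$. By Theorem~\ref{main:thm2}, the Carnot-Lebesgue representative $E$ has the $\Cl(S_W)$-cone property with $\Int(\Cl(S_W))\neq\emptyset$, so item (2) applies and gives $\partial E=\partial_{\rm mt}E$. It remains to identify these with $\partial_{\rm DG}E$ and $\mathrm{supp}\,|D\uno_E|$. From item (1) we have $\partial_{\rm DG}E\subseteq\mathrm{supp}\,|D\uno_E|\subseteq\partial_{\rm mt}E=\partial E$, so I only need the reverse inclusion $\partial E\subseteq\partial_{\rm DG}E$. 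This is exactly where Remark~\ref{remark:normalratioconstant} does the decisive work: for a constant-normal set the normalized measure ratio $\frac{D\uno_E(B_r(x))}{|D\uno_E|(B_r(x))}$ equals the fixed unit vector $X$ for \emph{every} $x\in\mathrm{supp}\,|D\uno_E|$ and every $r$, so the defining limit in Definition~\ref{DG_boundary} trivially exists and has unit norm at every point of the support. Therefore $\mathrm{supp}\,|D\uno_E|\subseteq\partial_{\rm DG}E$, which together with $\partial_{\rm DG}E\subseteq\mathrm{supp}\,|D\uno_E|$ gives $\partial_{\rm DG}E=\mathrm{supp}\,|D\uno_E|$, and combining all inclusions closes the loop to equality of all four sets.

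The step I expect to be the main obstacle is the passage $\partial E\subseteq\partial_{\rm mt}E$ in item (2), since it is the only genuinely quantitative point: one must verify the \emph{two-sided} density pinching, and in particular that the lower density is bounded away from zero, which requires applying the Lemma~\ref{lem:CL:open} density argument not to $E$ but to $E^c$ with the inverted cone $C^{-1}$. One has to be careful that $C^{-1}$ still has nonempty interior (it does, as the inverse of an open set is open) so that the fixed-fraction estimate survives. The remaining subtlety is making sure that in item (3) the constant-normal hypothesis is genuinely invoked through Remark~\ref{remark:normalratioconstant} rather than assumed implicitly; everything else follows formally from the cone-property theory and the already-proven inclusions.
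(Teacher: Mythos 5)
Items (1) and (2) of your argument are essentially the paper's: part (1) is definitional, and part (2) is exactly the two-sided density pinching from the cone property applied to $E$ and, via Remark~\ref{cone_prop_complement}, to $E^c$ with the open cone $\Int(C)^{-1}$. The first half of item (3) also matches the paper: Theorem~\ref{main:thm2} gives the cone property so that part (2) applies, and Remark~\ref{remark:normalratioconstant} gives ${\rm supp\,}|D\uno_E|\subseteq\partial_{\rm DG}E$, hence $\partial_{\rm DG}E={\rm supp\,}|D\uno_E|$.

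However, your item (3) has a genuine gap at the very end. You correctly announce that what is missing is $\partial E\subseteq\partial_{\rm DG}E$, but the argument you then give only establishes ${\rm supp\,}|D\uno_E|\subseteq\partial_{\rm DG}E$. Combining everything you actually have yields only the chain $\partial_{\rm DG}E={\rm supp\,}|D\uno_E|\subseteq\partial E=\partial_{\rm mt}E$; nothing you wrote rules out that ${\rm supp\,}|D\uno_E|$ is strictly smaller than $\partial E$, so the claim that ``combining all inclusions closes the loop'' does not follow. The missing step is the inclusion $\partial_{\rm mt}E\subseteq{\rm supp\,}|D\uno_E|$, and it is not formal: the paper proves it by noting that for $x\in\partial_{\rm mt}E$ every neighbourhood $U$ of $x$ contains positive measure of both $E$ and $\G\setminus E$, which forces $|D\uno_E|(U)>0$ (this uses the relative isoperimetric/Poincar\'e inequality, or equivalently that a set with vanishing perimeter in a connected open set is equivalent there to $\emptyset$ or to the whole set). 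Relatedly, the ``standard fact'' you invoke in item (1), namely ${\rm supp\,}|D\uno_E|\subseteq\partial_{\rm mt}E$ for an arbitrary measurable set, is false: a union of balls $B(x_n,r_n)$ with $x_n\to x_0$ and $r_n\to 0$ fast has density $0$ at $x_0$, so $x_0\notin\partial_{\rm mt}E$, yet $x_0\in{\rm supp\,}|D\uno_E|$. The correct and sufficient general containment is ${\rm supp\,}|D\uno_E|\subseteq\partial E$ (the measure vanishes on $\Int(E)\cup\Int(E^c)$), which is what you should use in item (1) and in the chain of item (3).
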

\begin{proof}
Part 1 is straightforward. Regarding part 2, the $C$-cone property immediately gives the missing inclusion $\partial E \subset\partial_{\rm mt}E$.

To prove part 3, we recall that by Theorem~\ref{main:thm2} this representative has a cone property, hence by part 2 we have $\partial E =\partial_{\rm mt}E$. Regarding De Giorgi's reduced boundary, 
%For every arbitrary set $E$, it is straightforward to check that
% $$ 
%\partial_{\rm mt}E\subset\partial E   \qquad\text{and}\qquad \partial_{\rm DG}E\subset\partial E .$$
%--
%
%If a set has the $C$-cone property with $\Int(C)\neq\emptyset$ then $\partial E \subset\partial_{\rm mt}E$, hence $\partial E =\partial_{\rm mt}E$.
%
%--
recall that (Definition~\ref{DG_boundary}) for an arbitrary set $E$ we have $\partial_{\rm DG}E\subseteq{\rm supp\,}|D\uno_E|$. For sets of constant normal, the reverse inclusion is 
%function  $\nu_E$ of Definition~\ref{DG_boundary} is constantly equal to the value of the normal 
given by Remark~\ref{remark:normalratioconstant}. Hence 
%every point in the support of $|D\uno_E|$ is in the reduced boundary. That is, we have 
$\partial_{\rm DG}E={\rm supp\,}|D\uno_E|$. 
%In particular, the set $\partial_{\rm DG}E$ is closed. 
 Together with parts 1 and 2 this gives ${\rm supp\,}|D\uno_E|=\partial_{\rm DG}E\subset\partial E = \partial_{\rm mt}E$. For the reverse inclusion let $x \in \partial_{\rm mt}E$, then in any neighbourhood $U$ of $x$ both $E \cap U$ and $U\setminus E$ have positive measure. This implies that $E$ must have positive perimeter in $U$. The arbitrariness of $U$ gives $x\in {\rm supp\,}|D\uno_E|$, so ${\rm supp\,}|D\uno_E|\supset \partial_{\rm mt}E$ and part 3 is proved.
% Let us prove that the inclusion $ {\rm supp\,}|D\uno_E|\subset\partial_{\rm mt}E$ cannot be strict. Both sets are closed, so if the inclusion were strict there would exist an open set $U$ such that $U\cap \partial_{\rm mt}E\neq\emptyset$ and $U\cap {\rm supp\,}|D\uno_E|=\emptyset$.
% The latter says that 
% %the restriction $|D\uno_E|\llcorner_U=|D\uno_{E\cap U}|$ is the zero measure.\footnote{E: is the equality clear?}
% %However, the only set with zero perimeter measure are (equivalent to) the empty set or to the whole space, \footnote{E: Why?} 
% $E$ has no perimeter in $U$. The former implies that both $E \cap U$ and $U\setminus E$ have positive measure, from which it follows that $E$ must have positive perimeter in $U$, contradition.
% and $\mathbb{G}\setminus \overline{E}$ intersect Consequently,  we get that either $U\subseteq E$ or $U\subseteq \G\setminus E$, up to sets of zero measure. However, since $E$ is the Lebesgue representative then the inclusions are true setwise. But each of this containments contradicts $U\cap \partial E\neq\emptyset$. Therefore, we necessarily have $ {\rm supp\,}|D\uno_E|=\partial E $. 
%MISSING CONCLUSION $\partial_{\rm DG}E={\rm supp\,}|D\uno_E|$.
\end{proof}

\subsection{Global density estimates}

In this subsection we deduce density estimates that are know to hold for locally for sets of finite perimeter. In fact, we get global estimates, i.e., for all radii, for constant-normal sets. 

\begin{proposition}\label{prop:densities}
	If a subset $ E\subset \G $ of a Carnot group $\G$ has $C$-cone property with $\Int(C)\neq\emptyset$, then there is a constant 
	$ 0<l_C \leq 1/2$ such that for all $x\in \partial E$ we have 
%	We se that $ x\in G $ is an Ambrosio-point for $ E $ if there exists $ \bar{r}> 0 $ satisfying
	\[
	l_C\vol(B_\rho(x,r))\leq \vol(B_\rho(x,r) \cap E) \leq (1-l_C)\vol(B_\rho(x,r)) \quad \forall r \in (0,\infty).
	\]
	In particular, for all $x\in\partial_{\rm mt}E$ the density is pinched, in the sense that
	$$0< l_C\leq \Theta_*(x,E)\leq
 \Theta^*(x,E)  \leq (1-l_C) <1.$$
	Moreover, if $ E\subset \G $ has constant normal, then $ l_C$   can be chosen to 
%	with $ l_G, L_G \in (0,\infty)$ 
	depend only on $ \G $. 
%	We will denote by $ \amb E $ the set of Ambrosio-points for $E$
\end{proposition}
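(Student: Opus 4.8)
The plan is to derive both inequalities from a single scale-invariant ``cone density'' constant, and then to upgrade to uniformity by a compactness argument over the sphere of normal directions. By Remark~\ref{cone_prop_open} I may assume $C$ is open. Set
$$\theta_C := \frac{\vol(B_\rho(1_\G,1)\cap C)}{\vol(B_\rho(1_\G,1))}.$$
First I would record that $\theta_C>0$: since $C$ is a nonempty open cone, a small dilate $\delta_s$ of a ball contained in $C$ lies in $C\cap B_\rho(1_\G,1)$, so the numerator is positive. Next, using that $\rho$ is left-invariant, that $\vol(\delta_\lambda A)=\lambda^Q\vol(A)$ for the homogeneous dimension $Q$, and that $\delta_\lambda C=C$, I would prove the identity $\vol(B_\rho(p,s)\cap pC)=\theta_C\,\vol(B_\rho(p,s))$ for all $p\in\G$ and $s>0$.

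For the lower bound fix $x\in\partial E\subseteq\Cl(E)$ and $r>0$. For every $\epsilon\in(0,1)$ there is $p\in E$ with $\rho(p,x)<\epsilon r$; then $pC\subseteq E$ by the cone property and $B_\rho(p,(1-\epsilon)r)\subseteq B_\rho(x,r)$, so the identity above gives $\vol(B_\rho(x,r)\cap E)\ge\theta_C(1-\epsilon)^Q\vol(B_\rho(x,r))$. Letting $\epsilon\to0$ yields $\vol(B_\rho(x,r)\cap E)\ge\theta_C\vol(B_\rho(x,r))$. For the upper bound I would apply the same argument to $E^c$: by Remark~\ref{cone_prop_complement} it has the $C^{-1}$-cone property, $C^{-1}$ is again an open cone with nonempty interior, and $x\in\Cl(E^c)$; this gives $\vol(B_\rho(x,r)\cap E^c)\ge\theta_{C^{-1}}\vol(B_\rho(x,r))$, i.e. $\vol(B_\rho(x,r)\cap E)\le(1-\theta_{C^{-1}})\vol(B_\rho(x,r))$. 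Taking $l_C:=\min\{\theta_C,\theta_{C^{-1}},1/2\}$ proves both displayed inequalities; the density pinching then follows by passing to $\liminf$ and $\limsup$ as $r\to0$, recalling $\partial_{\rm mt}E\subseteq\partial E$ from Proposition~\ref{prop:boundaries:coincide}.

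For the final claim let $E$ have constant normal $X$ with $\langle X,X\rangle=1$. By Theorem~\ref{main:thm2} its Carnot-Lebesgue representative has the $\Cl(S_{W_X})$-cone property, hence also the $C_X$-cone property for the smaller open cone $C_X:=\Int(S_{P_X})$, where $P_X:=\{v\in V_1:\langle v,X\rangle>0\}$; note $P_X$ still Lie-generates, so $C_X\neq\emptyset$ by Proposition~\ref{semigroup:has:interior}, and $C_X^{-1}=\Int(S_{P_{-X}})$. By the previous part one may take $l_C=\min\{\tilde f(X),\tilde f(-X)\}$ with $\tilde f(X):=\vol(B_\rho(1_\G,1)\cap \Int(S_{P_X}))/\vol(B_\rho(1_\G,1))>0$. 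Since $-X$ ranges over the unit sphere in $V_1$ together with $X$, it suffices to prove $\inf_{\langle X,X\rangle=1}\tilde f(X)>0$, which I would obtain from compactness of the unit sphere together with lower semicontinuity of $X\mapsto\tilde f(X)$.

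The crux, and the step I expect to be the main obstacle, is this lower semicontinuity. Here I would apply Sard's theorem to the smooth reachability maps $\Psi_k\colon \R_+^k\times(P_X)^k\to\G$, $((t_i),(Y_i))\mapsto\prod_{i=1}^k\exp(t_iY_i)$, whose images exhaust $S_{P_X}$: the union over $k$ of their critical values is a $\vol$-null set $N_X$, so $\vol$-a.e.\ point of $S_{P_X}$ is a regular value, realized as $\prod\exp(t_iY_i)$ with $t_i>0$ and $Y_i\in P_X$ (so $\langle Y_i,X\rangle>0$ strictly) at a point where $\dd\Psi_k$ is onto. For $X'$ close to $X$ these finitely many strict inequalities persist on a full neighborhood in the domain, so $\Psi_k$ maps into $S_{P_{X'}}$ near that preimage and its open image shows the point lies in $\Int(S_{P_{X'}})$. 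Hence along any $X_n\to X$ one has $\liminf_n\uno_{\Int(S_{P_{X_n}})}\ge\uno_{\Int(S_{P_X})\setminus N_X}$ pointwise, and Fatou's lemma gives $\liminf_n\tilde f(X_n)\ge\tilde f(X)$. Compactness of the sphere then produces a minimizer $X_0$ with $l_\G:=\tilde f(X_0)>0$, a constant depending only on $\G$ (and the fixed $\vol,\rho$). The delicate point to verify carefully is exactly that Sard's theorem applies on these open domains and that the resulting regular points exhaust $\Int(S_{P_X})$ up to a $\vol$-null set, so that the Fatou step is justified.
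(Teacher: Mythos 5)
Your proof is correct, and for the first two assertions it is essentially the paper's own argument: the same scale-invariant ratio $\theta_C=\vol(B_\rho(1_\G,1)\cap C)/\vol(B_\rho(1_\G,1))$ gives the lower bound and Remark~\ref{cone_prop_complement} gives the upper one; your $\epsilon$-approximation of $x\in\partial E$ by points of $E$ is in fact slightly more careful than the paper, which writes the estimate directly as if $x\in E$. The genuine divergence is in the final claim that $l_C$ depends only on $\G$. The paper proves no semicontinuity at all: it lifts to the free Carnot group $\mathbb F_{m,s}$ via a surjective Carnot morphism $\pi$ that is a submetry, uses that $O(m)$ acts on $\mathbb F_{m,s}$ by isometries and transitively on horizontal half-spaces, and bounds $\vol(B_\G(1_\G,1)\cap S_W)$ from below by the volume of the $\pi$-image of one fixed ball sitting inside a reference semigroup $S_{W_0}^{\mathbb F}$ --- a quantity manifestly independent of $W$. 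You instead stay inside $\G$ and establish lower semicontinuity of $X\mapsto\vol(B_\rho(1_\G,1)\cap \Int(S_{P_X}))$ via Sard's theorem for the reachability maps $\Psi_k$ plus Fatou, then use compactness of the unit sphere of $V_1$. Your route is sound: $\bigcup_k \im \Psi_k=S_{P_X}$ by definition of the generated semigroup, Sard applies to each smooth $\Psi_k$ on its open Euclidean domain, the countable union of critical-value sets is null, and at a regular preimage the finitely many strict conditions $\langle Y_i,X\rangle>0$ together with surjectivity of the differential are open in $(Y_i,X')$ jointly, so the point persists in $\Int(S_{P_{X'}})$ for $X'$ near $X$, which is exactly what the Fatou step needs. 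What the paper's approach buys is the complete elimination of any limiting argument (by $O(m)$-invariance the relevant volume in the free group does not depend on the half-space at all), at the price of importing free groups and the submetry property; your approach is intrinsic and self-contained but rests on the more delicate Sard--Fatou semicontinuity, which you correctly identified as the crux and have justified adequately.
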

\begin{proof}
Regarding the existence of $l_C$, consider the following quotients and use the fact that if $x\in E $ then $xC\subseteq E$ and get
$$\frac{\vol(B_\rho(x,r) \cap E)}{\vol(B_\rho(x,r))} 
\geq
\frac{\vol(B_\rho(x,r) \cap xC)}{\vol(B_\rho(x,r))} 
= \frac{\vol(B_\rho(1_\G,r) \cap C)}{\vol(B_\rho(1_\G,r))} 
= \frac{\vol(B_\rho(1_\G,1) \cap C)}{\vol(B_\rho(1_\G,1))} =:l_C,$$
where we used left invariance and that $C$ is a cone. Since $C$ has   interior we get $l_C>0$. The existence of the upper bound is analogue thanks to Remark~\ref{cone_prop_complement}.
The consequence about the pinched densities is immediate from the definition of densities.
Regarding the last statement, it is enough to show that 
\begin{equation}\label{cribbio}
0< \inf\{ l_{S_W}\; :\; W\subset V_1 \text{ horizontal half-space}\}.
\end{equation}
To prove this claim we shall use the notion of free Carnot groups, see \cite[p.\ 45]{varsalcou} or \cite[p.\ 174]{Varadarajan:1984:Lie_groups}.
If the Carnot group $\G$ has rank $m$ and step $s$ then there is a surjective Carnot morphism $\pi:\mathbb F \to \G$ between the  
free Carnot group  $\mathbb F=\mathbb F_{m,s}$ of rank $m$ and step $s$ onto $\G$. 
Moreover, we equip $\mathbb F$ with a Carnot distance that makes $\pi$ a submetry: for all $p\in \mathbb F$ and all $r>0$, we have
\begin{equation}\label{submetry}
\pi(B_{\mathbb F}(p,r) ) = B_{\G}(p,r) := B_{\rho}(p,r).
\end{equation}
Since $\mathbb F$ is a free Carnot group, then the action of $GL(V_1^{\mathbb F})$ on the first layer $V_1^{\mathbb F}$ of $\mathbb F$ extends to an action by Carnot morphisms of $\mathbb F$. 
Moreover, fixing coordinates on $V_1^{\mathbb F}$ so that it becomes isometric to $\R^m$,  
the orthogonal group $O(m)$ acts by isometries of $\mathbb F$ and acts transitively on the space of horizontal half-spaces.
%In addition, recall that the quotient map $\pi$ is a submetry, i.e., sends balls to balls with same radius.
%
Namely, fixed a horizontal half-space $W_0$, we have that for every horizontal half-space $W$ there exists $A\in O(m)$ such that $A(W_0)=W$ and therefore $A(S_{W_0}^\mathbb F)=S_W^\mathbb F$, where the latter ones are semigroups generated in $\mathbb F$. Therefore, we get 
\begin{eqnarray*}
l_{S_W} {\vol(B_\G(1_\G,1))}  &=& {\vol(B_\G(1_\G,1) \cap S_W)}
\\&=& 
\vol(\pi (B_\mathbb F(1_\mathbb F,1) ) \cap \pi (S_W^\mathbb F))
\\&\geq&
\vol(\pi (B_\mathbb F(1_\mathbb F,1) \cap  S_W^\mathbb F))
\\&=&
\vol(\pi (B_\mathbb F(1_\mathbb F,1) \cap  A (S_{W_0}^\mathbb F))),
\end{eqnarray*}
where in the second equality we used \eqref{submetry} and that $\pi (S_W^\mathbb F)= S_W $, by identifying via $\pi$ the horizontal spaces.
% that the quotient map $\pi$ is a submetry, i.e., sends balls to balls with same radius. 
%Since the 
On the one hand, the quantity $\vol(\pi (B_\mathbb F(1_\mathbb F,1) \cap  A (S_{W_0}^\mathbb F)))$ when $A$ varies in the compact set $O(m)$ must have a minimum. On the other hand, this minimum, say realized by some $A'$, cannot be zero since
the horizontal 
half-space $W':=A'(W_0)$
generates a semigroup 
%$A' (S_{W_0}= S_{A'(W_0)}$ has 
with interior; say there is a ball $B(p,r) \subseteq B_\mathbb F(1_\mathbb F,1) \cap  S_{W'}^\mathbb F$, for some $p\in \mathbb F$ and  $r>0$. Therefore, we get
\begin{eqnarray*}\vol(\pi (B_\mathbb F(1_\mathbb F,1) \cap  A' (S_{W_0}^\mathbb F))) &=& 
\vol(\pi (B_\mathbb F(1_\mathbb F,1) \cap  S_{W'}^\mathbb F)\\
&\geq &\vol(\pi (B_{\mathbb F} (p,r)    ))
\\&=&\vol(B_{\G} (\pi( p),r)    )>0 .
\end{eqnarray*}
In conclusion, we proved \eqref{cribbio}.

Regarding the upper bound with the term $(1-l_c)$, we pass to the complement of $E$ and  refer to Remark~\ref{cone_prop_complement}.
\end{proof}

\begin{proposition}
For every  Carnot group $\G$ there exist positive constants $ k_\G ,K_\G$ such that 
if $ E\subset \G $ is a  constant normal set, then for all $x\in\partial_{\rm mt}E$ we have
 
	\[
	k_\G \frac{\vol(B_\rho(x,r))}{r} \leq |D\uno_E|(B_\rho(x,r)) \leq  K_\G\frac{\vol(B_\rho(x,r))}{r} ,\quad \forall r \in (0,\infty).
	\]
	\end{proposition}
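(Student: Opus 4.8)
The plan is to identify the sub-Riemannian perimeter measure explicitly and then treat the two bounds separately: the upper one by a single test-function estimate, and the lower one by the relative isoperimetric inequality combined with the global density estimates of Proposition~\ref{prop:densities}.

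First I would show that the perimeter measure coincides with the single nonnegative measure $X\uno_E$, where $X\in V_1$ is the unit constant normal. Let $W=\{Y\in V_1:\langle Y,X\rangle\ge 0\}$ be the closed half-space witnessing the constant normal. Every $Y\in V_1$ orthogonal to $X$ satisfies both $Y\in W$ and $-Y\in W$, so $Y\uno_E\ge0$ and $-Y\uno_E\ge0$; since nonnegative distributions are Radon measures (Remark~\ref{remark:positivedistr}), this forces $Y\uno_E=0$. Decomposing each basis vector as $X_i=\langle X_i,X\rangle X+X_i^\perp$ with $X_i^\perp\perp X$ then gives $X_i\uno_E=\langle X_i,X\rangle\,X\uno_E$, whence $D\uno_E=X\cdot(X\uno_E)$ and, because $|X|=1$ and $X\uno_E\ge0$ (as $X\in\Int(W)$), we obtain $|D\uno_E|=X\uno_E$. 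In particular $E$ has locally finite sub-Riemannian perimeter.

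For the upper bound I would exploit that $X\uno_E\ge0$. Fix a cutoff $u\in C_c^\infty(\G)$ with $0\le u\le 1$, $u\equiv1$ on $B_\rho(x,r)$, $\supp u\subset B_\rho(x,2r)$ and $\rho$-Lipschitz constant $\le C/r$; since $X$ is a unit horizontal field, $|Xu|\le C/r$. Testing the nonnegative measure against $u\ge\uno_{B_\rho(x,r)}$,
\[
|D\uno_E|(B_\rho(x,r))=X\uno_E(B_\rho(x,r))\le\langle X\uno_E,u\rangle=-\int_{E}Xu\,\dd\vol\le\frac{C}{r}\vol(B_\rho(x,2r)),
\]
and the doubling property of $(\G,\rho,\vol)$ turns $\vol(B_\rho(x,2r))$ into $K_\G\vol(B_\rho(x,r))$, with $K_\G$ depending only on $\G$. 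For the lower bound I would combine the density estimates with the relative isoperimetric inequality. Since $\partial_{\rm mt}E$ and $|D\uno_E|$ are unchanged by passing to the Carnot-Lebesgue representative, I may assume $E$ is that representative, so that Proposition~\ref{prop:densities} yields, for $x\in\partial_{\rm mt}E=\partial E$ and every $r\in(0,\infty)$,
\[
\min\{\vol(B_\rho(x,r)\cap E),\ \vol(B_\rho(x,r)\setminus E)\}\ge l_\G\,\vol(B_\rho(x,r)),
\]
with $l_\G>0$ depending only on $\G$. The relative isoperimetric inequality valid in any Carnot group (a consequence of the $(1,1)$-Poincaré inequality, see e.g. \cite{fssc}) provides a constant $C_\G$ and a dilation factor $\lambda\ge1$ with
\[
\min\{\vol(B_\rho(x,r)\cap E),\ \vol(B_\rho(x,r)\setminus E)\}\le C_\G\, r\,|D\uno_E|(B_\rho(x,\lambda r)).
\]
Chaining these two inequalities gives $|D\uno_E|(B_\rho(x,\lambda r))\ge (l_\G/C_\G)\,\vol(B_\rho(x,r))/r$, and replacing $r$ by $r/\lambda$ together with doubling removes the dilation factor, yielding $|D\uno_E|(B_\rho(x,r))\ge k_\G\vol(B_\rho(x,r))/r$ with $k_\G$ depending only on $\G$.

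The main obstacle is the lower bound: it rests on the relative isoperimetric inequality (equivalently the Poincaré inequality) in Carnot groups, together with the fact that the density estimates of Proposition~\ref{prop:densities} are genuinely global, holding at every radius and, for constant-normal sets, uniformly in the point. The homogeneity under the dilations $\delta_t$ and the doubling property are precisely what keep all constants radius-independent; checking that the isoperimetric inequality may be invoked at every scale with constants depending only on $\G$ is the only nonroutine point, the rest being the direct computations above.
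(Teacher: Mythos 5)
Your proof is correct and follows essentially the same strategy as the paper's: the upper bound comes from the constant-normal identity $|D\uno_E|=X\uno_E$ tested against a cutoff at scale $r$ (which is exactly the content of Remark~\ref{remark:normalratioconstant} combined with the argument of \cite[Lemma 2.31]{fssc} that the paper invokes with $r_0=\infty$), and the lower bound from the relative isoperimetric/Poincar\'e inequality chained with the global density estimates of Proposition~\ref{prop:densities}. The only cosmetic differences are that you use the weak $(1,1)$ form of the isoperimetric inequality with a dilation factor $\lambda$ removed by doubling, whereas the paper quotes the $(Q-1)/Q$-exponent version from \cite[Remark 3.4]{Amb02}.
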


\begin{proof}

 The Poincar\'{e} inequality is true in Carnot groups, so \cite[Remark 3.4]{Amb02} and Proposition~\ref{prop:densities} give (here $Q$ denotes the Hausdorff dimension of $\mathbb{G}$)
 $$|D\uno_E|(B_\rho(x,r)) \geq C_{\mathbb{G}} \min\left\{\left(\vol(E\cap B_\rho(x,r)) \right)^{\frac{Q-1}{Q}},\left(\vol(E\cap B_\rho(x,r)) \right)^{\frac{Q-1}{Q}}\right\}  \geq C_{\mathbb{G}} l_C^{\frac{Q-1}{Q}} r^{Q-1}$$
for every $r>0$ and for all $x\in\partial_{\rm mt}E$.  

 For the upper bound, we note that we can repeat the proof of \cite[Lemma 2.31]{fssc} with $r_0=\infty$ and for $x$ arbitrary in $\partial_{\rm mt}E$. Indeed, the choice of $r_0$ in \cite{fssc} is needed to ensure \cite[(2.32)]{fssc}; in our case, Remark~\ref{remark:normalratioconstant} guarantees \cite[(2.32)]{fssc} for all $r$ and for all $x\in\partial_{\rm mt}E$ (more precisely, it gives that \cite[(2.32)]{fssc} is an equality, without the factor $2$ on the right-hand-side). We then conclude (using Proposition~\ref{prop:densities} for the second inequality)
 $$|D\uno_E|(B_\rho(x,r)) \leq\frac{\vol_\rho(E \cap B_\rho(x,2r))}{r}\leq (1-l_C) 2^Q \frac{\vol_\rho(B_\rho(x,r))}{r}.$$
 \end{proof}

\section{Euclidean cones and wedges of semigroups}

In this section, with the aim of writing precisely constant normal sets as H\"older upper-graphs, we consider the largest cone inside the semigroup generated by a horizontal half-space. We shall show that such a cone shares several properties, which are referred to as being a Lie wedge. Consequently, we deduce that there are several half one-parameter subgroups in the semigroup.
Our viewpoint is highly inspired by \cite[Chapters 1-3]{Hilgert_Neeb:book_semigroup}.
%[...] SOME INTRO [...]

Let $\G$ be a Carnot group. Recall that the exponential map $\exp$ from the Lie algebra $\mathfrak g$ of $\G$ to $\G$ is a global diffeomorphism, whose inverse we denote by $\log$.

\begin{remark}\label{rmk:ad_inv}
We claim that if $S\subseteq \G$ is a semigroup, then 
\begin{equation}\label{Levico2019}
 e^{\ad_X} \log(S) = \log(S), \qquad \text{ for all } X \text{ such that } \pm X\in \log(S).
 \end{equation}
Indeed, we have
\begin{eqnarray*}
 e^{\ad_X} \log(S) =\Ad_{\exp(X)} \log(S) =\log C_{\exp(X)} (S)
=\log ({\exp(X)} S\exp(-X))\subseteq  \log(S),
\end{eqnarray*}
where in the last containment we used that, by assumption, the set $S   $ is a semigroup and $\exp(\pm X)\in S$.
Since the inverse map of $ e^{\ad_X}$ is $ e^{\ad_{-X}}$ then by symmetry we also have the other inclusion in \eqref{Levico2019}.
\end{remark}

In particular, if $S   $ is the  semigroup generated by a horizontal half-space $W$, then for the set $\mathfrak s:=
\log(\bar S)$, where $\bar S=\Cl(S))$, we have
%\subseteq \mathfrak g$ such that $\exp(\mathfrak s) =\Cl(S)$, then 
$$ e^{\ad_X} \mathfrak s = \mathfrak s, \qquad \text{ for all }     X\in W\cap(-W).$$ % \partial W,$$
%where $\partial W$
Notice that $W\cap(-W)$ is the hyperplane in $V_1$ that is the boundary of $W$ within $V_1$.

In the following discussions, a subset $\mathfrak c$ of the Lie algebra $\mathfrak g$ is said to be a {\em cone} (or, more precisely, an {\em Euclidean cone}) if
$$t \mathfrak c = \mathfrak c   , \qquad \text{ for all } t>0.    $$
This notion should not be confused with the one \eqref{cone_def} of cones as subsets of the Carnot group $\G$. 
%Moreover, we say that $\mathfrak c$ is convex if it is ...

\begin{definition}\label{def:wedge}
To every semigroup $S\subseteq \G$ we associate the set
\begin{equation}\label{wedge}
 \mathfrak w _S:= \{X\in \mathfrak g \; : \; \R_+ X \subseteq \log(  S) \}.
 \end{equation}
 We shall refer to this set as the {\em wedge tangent to $S$}. 
\end{definition}
%\begin{definition}[Lie wedge]
Actually, the notion of   (Lie) wedge is present in the literature, see \cite[Section 1.4]{Hilgert_Neeb:book_semigroup}, and agrees with the one of this paper. For an abstract viewpoint, we recall here the standard notion. Let $\mathfrak g$ be a Lie algebra. A subset $\mathfrak w\subseteq \mathfrak g$  is said to be a {\em  Lie wedge} if
\begin{itemize}
\item $\mathfrak w$ is a closed convex cone
\item $e^{{\rm ad} X }\mathfrak w=\mathfrak w$, for all $X\in \mathfrak w\cap (-\mathfrak w).$
\end{itemize}
%\end{definition}
Next lemma clarifies that the  wedge $ \mathfrak w _S$ tangent to a closed semigroup $S$ is a Lie wedge.
The proof is not original, see for example \cite[Proposition 1.4]{Hilgert_Neeb:book_semigroup}.
\begin{lemma}\label{lem:CH}
 If $S\subseteq \G$ is a closed semigroup, 
 then  the  wedge $ \mathfrak w _S$ tangent to $S$, defined in \eqref{wedge}, satisfies the following properties:
 \begin{enumerate}
\item $ \mathfrak w _S$ is the largest cone in $\log(  S)$;
\item $ \mathfrak w _S$ is closed and  convex; 
\item $ \mathfrak w _S$ is invariant under $ e^{\ad_X} $ for each $X\in \log(  S)\cap(-\log(  S))$, i.e.,
\begin{equation}\label{ad_inv}
 e^{\ad_X}  \mathfrak w _S =  \mathfrak w _S, \qquad \text{ for all } X \text{ such that } \pm X\in \log(  S).
 \end{equation}
\end{enumerate}
% and $\mathfrak c \subseteq \log(  S)$ is a cone, then
% the closure of the convex hull of $\mathfrak c $ is a subset of $ \log(  S)$.
\end{lemma}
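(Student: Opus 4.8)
The plan is to verify the three properties in order, since each builds on the previous one. For property (1), that $\mathfrak w_S$ is the largest cone contained in $\log(S)$, first I would observe directly from the definition \eqref{wedge} that $\mathfrak w_S$ is a cone: if $X\in\mathfrak w_S$ then $\R_+X\subseteq\log(S)$, and for any $t>0$ we have $\R_+(tX)=\R_+X\subseteq\log(S)$, so $tX\in\mathfrak w_S$. Moreover $\mathfrak w_S\subseteq\log(S)$ because $X=1\cdot X\in\R_+X\subseteq\log(S)$ whenever $X\in\mathfrak w_S$. For maximality, if $\mathfrak c\subseteq\log(S)$ is any cone and $X\in\mathfrak c$, then $\R_+X\subseteq\mathfrak c\subseteq\log(S)$, so $X\in\mathfrak w_S$; hence $\mathfrak c\subseteq\mathfrak w_S$. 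This part is essentially unwinding definitions.

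For property (2), closedness follows from the hypothesis that $S$ is closed, hence $\log(S)$ is closed (as $\log$ is a homeomorphism). To see that $\mathfrak w_S$ itself is closed, I would take $X_n\to X$ with $X_n\in\mathfrak w_S$; for fixed $t>0$ we have $tX_n\to tX$ with $tX_n\in\log(S)$, and closedness of $\log(S)$ gives $tX\in\log(S)$, so $X\in\mathfrak w_S$. Convexity is the more substantial point. The natural route is to exploit the semigroup structure together with the closedness of $S$ via a Lie-theoretic limit argument: given $X,Y\in\mathfrak w_S$, I want $X+Y\in\mathfrak w_S$, and by the cone property it suffices to produce $X+Y$ (or positive multiples) in $\log(S)$. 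The key identity is the Lie product formula $\exp(X+Y)=\lim_{n\to\infty}\bigl(\exp(X/n)\exp(Y/n)\bigr)^n$. Since $\exp(X/n),\exp(Y/n)\in S$ for all $n$ (because $\R_+X,\R_+Y\subseteq\log(S)$), each product $\bigl(\exp(X/n)\exp(Y/n)\bigr)^n$ lies in $S$ by the semigroup property, and closedness of $S$ forces the limit $\exp(X+Y)$ into $S$, i.e. $X+Y\in\log(S)$. Applying this to positive multiples $tX,tY$ gives $t(X+Y)\in\log(S)$ for all $t>0$, hence $X+Y\in\mathfrak w_S$.

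For property (3), the invariance under $e^{\ad_X}$, I would reuse the computation already carried out in Remark~\ref{rmk:ad_inv}, which establishes exactly \eqref{Levico2019}: $e^{\ad_X}\log(S)=\log(S)$ whenever $\pm X\in\log(S)$. I then need to promote this from $\log(S)$ to the subset $\mathfrak w_S$. The point is that $e^{\ad_X}$ is a linear isomorphism of $\mathfrak g$, so it carries rays to rays; concretely, for $Y\in\mathfrak w_S$ we have $\R_+Y\subseteq\log(S)$, and applying $e^{\ad_X}$ gives $\R_+(e^{\ad_X}Y)=e^{\ad_X}(\R_+Y)\subseteq e^{\ad_X}\log(S)=\log(S)$, which says $e^{\ad_X}Y\in\mathfrak w_S$. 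Thus $e^{\ad_X}\mathfrak w_S\subseteq\mathfrak w_S$, and the reverse inclusion follows by applying the same argument to $-X$ (whose inverse $e^{\ad_{-X}}$ satisfies the same hypothesis), exactly as in the symmetry remark at the end of Remark~\ref{rmk:ad_inv}.

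The main obstacle is the convexity in step (2): closedness and the cone property alone do not give convexity for a general closed cone, so one genuinely needs the interaction between the semigroup multiplication and the topology. The Lie product formula is the right tool, and the only care required is that the approximating products actually remain in $S$, which is where the semigroup closure under multiplication is essential. The remaining pieces are formal consequences of the definitions and of the already-proved Remark~\ref{rmk:ad_inv}.
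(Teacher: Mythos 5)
Your proposal is correct and follows essentially the same route as the paper: the definitional unwinding for (1), the Lie product formula $\exp(X+Y)=\lim_n(\exp(X/n)\exp(Y/n))^n$ combined with closedness of $S$ for convexity, and Remark~\ref{rmk:ad_inv} plus linearity of $e^{\ad_X}$ for (3). The only cosmetic differences are that the paper deduces closedness of $\mathfrak w_S$ from its maximality (the closure of a cone in the closed set $\log(S)$ is again such a cone) rather than by a direct sequential argument, and deduces (3) by noting that a linear bijection preserving $\log(S)$ must fix its largest cone; both variants are equivalent to yours.
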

\begin{proof}
Set $\mathfrak s:=
\log(  S)$. 
By construction, an element $X$ is in $ \mathfrak w _S$ if and only if 
$ \R_+ X \subseteq \mathfrak s$. Thus $ \mathfrak w _S$ is the largest cone in $\mathfrak s$.
Since $\mathfrak s$ is closed, then the closure of $ \mathfrak w _S$ is a cone in $\mathfrak s$. By maximality of 
 $ \mathfrak w _S$, we deduce that it is closed.
 
To check that $ \mathfrak w _S$ is convex, since
%Since $ \log(  S)$ is closed, we just need to prove that the convex hull of $\mathfrak c $ is a subset of $ \log(  S)$. 
$\mathfrak w _S  $ is a cone, it is enough to show that for all $X, Y\in \mathfrak w _S $ we have $X+ Y\in \mathfrak w _S $.
Recall the formula, which holds in all Lie groups, % by \cite{}, 
 \begin{equation}
\label{sum}
\exp(X+Y) = \lim_{n\to \infty} \left(\exp\left(\tfrac1n X\right) \exp\left(\tfrac1n Y\right) \right)^n.
 \end{equation}
Since  $\R_+ X, \R_+ Y \subseteq \log(  S)$, then 
$\exp(\frac1n X), \exp(\frac1n Y) \in   S$, for all $n\in \N$.
Consequently, since $  S$ is a semigroup, we have $ \left(\exp(\frac1n X) \exp(\frac1n Y) \right)^n \in   S$.
Being $  S$ closed by assumption, we get from \eqref{sum} that $\exp(X+Y)  \in   S$.

Regarding \eqref{ad_inv} take $X $ such that $\pm X\in \mathfrak s$, so by Remark~\ref{rmk:ad_inv} we have 
$ e^{\ad_X}\mathfrak s =\mathfrak s$.
Consequently, since the map  $e^{\ad_X}$ is linear, it sends cones in $\mathfrak s$ to cones in $\mathfrak s$. In particular, this map fixes the largest cone, i.e., it fixes $ \mathfrak w _S$. 
\end{proof}

\begin{remark}
If $S$ is a semigroup and $\pm X,Y\in \log(S)$, then we have that 
\begin{equation}\label{rem:conj}
\exp(\Ad_{\exp(X)}Y) = \exp((C_{\exp(X)})_*Y)  = C_{\exp(X)}(\exp(Y))=
  {\exp(X)}\exp(Y)\exp(-X)\in S,
\end{equation}
where we have used that $
\Ad_g$ is by definition the differential of $C_g$ and that exp intertwines  this differential with $C_g$ and finally that $S$ is a semigroup.
\end{remark}

Every horizontal half-space $W$ is a cone inside the semigroup $S_W   $  that it generates. Therefore, the set $W$ is a subset of the
wedge $\mathfrak w _{\bar S_W}$ tangent to $\bar S_W   $.
The next result infers how one can gets the knowledge of the presence of more elements in this wedge. 

\begin{proposition}\label{prop more elements in wedge} Let $S_W   $ be the  semigroup generated by a horizontal half-space $W$.   
If $\pm X \in   \log(S_W)$ and 
$\R_+ Y  \subset \log(S_W)$, then
$$\Ad_{\exp(X)}Y \in  \mathfrak w _{\bar S_W}.$$
In particular, we have
\begin{equation}\label{}
\Ad_{\exp(X)}Y \in  \mathfrak w _{\bar S_W},\qquad \forall X\in  W\cap(-W) 
,\forall Y\in W. 
\end{equation}
\end{proposition}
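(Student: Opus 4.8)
The plan is to show directly that $\R_+\,\Ad_{\exp(X)}Y\subseteq\log(S_W)$; since $S_W\subseteq\bar S_W$ this gives $\R_+\,\Ad_{\exp(X)}Y\subseteq\log(\bar S_W)$, which by the definition \eqref{wedge} of the tangent wedge is precisely the assertion $\Ad_{\exp(X)}Y\in\mathfrak w_{\bar S_W}$. The engine of the argument is the conjugation identity recorded in \eqref{rem:conj}, namely that $\exp(\Ad_{\exp(X)}Z)=\exp(X)\exp(Z)\exp(-X)$ for every $Z\in\mathfrak g$, combined with the fact that $S_W$ is closed under multiplication.

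First I would fix $t>0$ and set $Z:=tY$. The hypothesis $\R_+Y\subset\log(S_W)$ gives $tY\in\log(S_W)$, i.e. $\exp(tY)\in S_W$, while the hypothesis $\pm X\in\log(S_W)$ gives $\exp(X),\exp(-X)\in S_W$. Applying \eqref{rem:conj} with $Z=tY$ then yields
\[
\exp\big(\Ad_{\exp(X)}(tY)\big)=\exp(X)\exp(tY)\exp(-X)\in S_W,
\]
the membership holding because $S_W$ is a semigroup. Since $\Ad_{\exp(X)}=e^{\ad_X}$ is linear, $\Ad_{\exp(X)}(tY)=t\,\Ad_{\exp(X)}Y$, so $\exp\big(t\,\Ad_{\exp(X)}Y\big)\in S_W$. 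As $t>0$ was arbitrary, $\R_+\,\Ad_{\exp(X)}Y\subseteq\log(S_W)$, and the first claim follows as explained above.

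For the \textbf{in particular} statement I would simply verify that its hypotheses hold for $X\in W\cap(-W)$ and $Y\in W$. If $X\in W\cap(-W)$ then both $X\in W$ and $-X\in W$, whence $\exp(\pm X)\in\exp(W)\subseteq S_W$, that is $\pm X\in\log(S_W)$. If $Y\in W$, then since $W$ is dilation invariant ($\R_+W=W$) we have $\R_+Y\subseteq W$, so $\exp(tY)\in\exp(W)\subseteq S_W$ for all $t>0$, that is $\R_+Y\subset\log(S_W)$. The conclusion is then immediate from the first part.

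I do not expect a genuine obstacle here, as the argument reduces to a one-line conjugation computation. The only points requiring a little care are (i) that the conjugation identity must be invoked for \emph{every} $t>0$ simultaneously, which is exactly why the hypothesis is phrased as $\R_+Y\subset\log(S_W)$ rather than merely $Y\in\log(S_W)$; and (ii) that the computation lands in $S_W$ itself, not merely in its closure, so that passing to $\bar S_W$ costs nothing and is needed only because the tangent-wedge machinery (cf.\ Lemma~\ref{lem:CH}) is set up for closed semigroups.
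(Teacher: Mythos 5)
Your argument is correct and is essentially the paper's own proof: both reduce the claim to the conjugation identity \eqref{rem:conj} applied to $tY$ for each $t>0$, using that $\exp(\pm X),\exp(tY)\in S_W$ and that $S_W$ is a semigroup, and both dispatch the ``in particular'' part by noting that $W$ is a cone contained in $\log(S_W)$. Your write-up merely makes explicit the linearity of $\Ad_{\exp(X)}$ and the passage from $S_W$ to $\bar S_W$, which the paper leaves implicit.
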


\begin{proof}
By assumption, for all $t>0$ we have that $\pm X,t Y\in   \log(S_W)$. Consequently, by \eqref{rem:conj} we get $\exp(t \Ad_{\exp(X)}Y) =\exp(\Ad_{\exp(X)}tY)  \in S$. The second part is immediate since $W\subset \log(S_W)$ and $W$ is a cone.
\end{proof}
We have evidence that in groups of step $\leq 4$, the intersection $ \exp(\mathfrak w _{\bar S_W} )\cap \Int(  S_W )$ is not empty; see Remark~\ref{rem:evidence}. We wonder if this is the case in general.

\begin{conjecture} \label{conj_intersect} For every horizontal half-space $W$ in every Carnot group $\G$,  we have 
$$ \exp(\mathfrak w _{\bar S_W} )\cap \Int(  S_W ) \neq \emptyset .$$

\end{conjecture}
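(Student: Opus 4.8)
The plan is to produce a single infinitesimal generator $z\in\mathfrak{w}_{\bar S_W}$ whose exponential already lies in $\Int(S_W)$, choosing $z$ to point genuinely into the higher strata. Since the $\mathbb F_{23}$ computation shows that all of $\exp(W)$ can lie in $\partial S_W$, the candidate $z$ should be sought with nontrivial components in $V_2\oplus\cdots\oplus V_s$ rather than inside $V_1$; indeed for $z\in V_1$ one has $\delta_t\exp(z)=\exp(tz)$, so such directions only reproduce the dilation rays of $\exp(W)$. As an opening move I would pass to the free Carnot group $\mathbb F$ through the submetry $\pi\colon\mathbb F\to\G$ used in the proof of Proposition~\ref{prop:densities}, exploiting the transitive $O(m)$-action on horizontal half-spaces to normalize $W$. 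It then suffices to treat one fixed $W$ in $\mathbb F$: if $z\in\mathfrak{w}_{\bar S_W^{\mathbb F}}$ satisfies $\exp(z)\in\Int(S_W^{\mathbb F})$, then $d\pi(z)\in\mathfrak{w}_{\bar S_W}$ and, since $\pi(S_W^{\mathbb F})=S_W$ and $\pi$ is open (being a submetry), $\pi(\exp(z))=\exp(d\pi(z))\in\Int(S_W)$, giving the claim downstairs.

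Second, I would enlarge the wedge beyond $W$ in a controlled way. Proposition~\ref{prop more elements in wedge} gives $\Ad_{\exp(X)}Y=e^{\ad_X}Y\in\mathfrak{w}_{\bar S_W}$ for every $X\in W\cap(-W)$ and $Y\in W$; expanding $e^{\ad_X}Y=Y+[X,Y]+\tfrac12[X,[X,Y]]+\cdots$ exhibits explicit wedge elements whose higher-stratum parts are iterated brackets of $X$ against $Y$. These are not abstractions: the conjugated rays $t\mapsto\exp(sX)\exp(tY)\exp(-sX)=\exp\!\big(t\,e^{s\ad_X}Y\big)$ are honest products of elements of $\exp(W)$, so $\exp\!\big(e^{s\ad_X}Y\big)\in S_W$ itself, and these are the natural candidates for the interior step below. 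Passing to the closed convex cone $\mathfrak{w}_{\bar S_W}$ (Lemma~\ref{lem:CH}) one extracts still more: symmetrizing $e^{\ad_X}Y$ with $e^{-\ad_X}Y$ cancels the first-order bracket, and rescaling by $|X|^{-2}$ as $|X|\to\infty$ yields a pure third-layer direction in the wedge (for instance $[X_1,[X_1,X_2]]$ in $\mathbb F_{23}$), confirming that $\mathfrak{w}_{\bar S_W}$ strictly contains $W$.

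The decisive and hardest step is to promote $\exp(z)\in S_W$ to $\exp(z)\in\Int(S_W)$ for one of the conjugation-product directions $z=e^{s\ad_X}Y$. Here I would imitate the submersion argument of Proposition~\ref{semigroup:has:interior}: exhibit a smooth map $(t_1,\dots,t_N)\mapsto\exp(w_1(t))\cdots\exp(w_N(t))$ into $S_W$, with all $w_i(t)\in W$, whose value at some parameter is $\exp(z)$ and whose differential there has rank $\dim\G$; the Lie-bracket-generating property of $W$—the same input used in Proposition~\ref{semigroup:has:interior}—is what should make full rank attainable, and almost-regularity together with openness of the Carnot-Lebesgue representative (Lemma~\ref{lem:almost regular}, Corollary~\ref{semigroup_Canot}) would then certify $\exp(z)\in\Int(S_W)$. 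The main obstacle is precisely this bridge from $\bar S_W$ to $\Int(S_W)$: wedge membership only guarantees $\exp(z)\in\bar S_W$, and the $\mathbb F_{23}$ pathology—where $\exp(W)$, and apparently every exponentiated wedge ray one writes down, sits on $\partial S_W$—shows that no soft argument forces the required transversality. One must instead pinpoint a wedge direction along which the reachable set accumulates full-dimensionally, and do so \emph{uniformly} in the step $s$. This uniform transversality is exactly what can be checked by explicit computation when $s\le4$ (Remark~\ref{rem:evidence}) and what I expect to be the genuine difficulty in arbitrary step.
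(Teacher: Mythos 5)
First, a framing point: the statement you were given is stated in the paper as Conjecture~\ref{conj_intersect}, not as a theorem. The authors explicitly say they only ``have evidence'' for it in step at most $4$, verifying it by an explicit computation in $\mathbb F_{23}$ and claiming an analogous computation in $\mathbb F_{24}$ (Remark~\ref{rem:evidence}). So there is no general proof in the paper to compare against, and your proposal does not supply one either: you yourself flag the decisive step --- promoting $\exp(z)\in\bar S_W$ to $\exp(z)\in\Int(S_W)$ for some wedge element $z$ --- as an expectation rather than an argument, and that is exactly the open point. Wedge membership only yields $\exp(\R_+z)\subseteq\bar S_W$; the interior of $S_W$ is nonempty by Proposition~\ref{semigroup:has:interior}, but that Orbit-Theorem-type argument produces \emph{some} product of exponentials in the interior with no control on whether any such product sits on a one-parameter ray contained in $\log(\bar S_W)$. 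Moreover, your concrete candidates fail: the paper's $\mathbb F_{23}$ computation shows that \emph{every} single conjugated ray $\Ad_{\exp(aX_1)}(bX_1+cX_2)$ exponentiates into the boundary $\{\tilde P=0\}$ of $S_W$, so no $z=e^{s\,\ad_X}Y$ will do; and your symmetrized pure third-layer direction ($[X_1,[X_1,X_2]]=X_4$ in $\mathbb F_{23}$) lies in the wedge but has $a_2=0$, hence $\exp(X_4)\notin\Int(S_W)=\{\tilde P>0,\ a_2>0\}$.

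What actually succeeds in $\mathbb F_{23}$ --- and what your sketch stops just short of --- is to exploit the convexity of the wedge (Lemma~\ref{lem:CH}) and take a \emph{sum} of two conjugated directions, $z=X_2+\Ad_{\exp(X_1)}X_2$, then verify by hand that the five-parameter perturbation of the product $\exp(\tfrac12X_2)\exp(X_1)\exp(X_2)\exp(-X_1)\exp(\tfrac12X_2)$ is a submersion at the relevant parameter, so that $\exp(z)\in\Int(S_W)$. This is a genuinely case-by-case verification; neither the paper nor your proposal offers a mechanism making the required full rank available uniformly in the step, which is precisely why the statement is left as a conjecture. For what it is worth, your reduction to the free group via the submetry $\pi$ and the transitive $O(m)$-action is sound and is a sensible normalization ($\pi$ is open, so $\pi(\Int(S_W^{\mathbb F}))\subseteq\Int(S_W)$, and $d\pi$ carries the wedge upstairs into the wedge downstairs), but it only relocates the problem to $\mathbb F_{m,s}$; it does not close it.
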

Recall that on the one hand, we have $\Int( S_W )=\Int( \bar S_W ) \neq \emptyset$ and that this interior is meant within $\G$. On the other hand, we have
$\exp(\mathfrak w _{\bar S_W} )\supseteq \exp(W)  \neq \emptyset$. 
 However, in some Carnot groups $\exp(W) \cap \Int(  S_W ) = \emptyset $, for example in the group considered in the next section.

%We shall  make use of Proposition~\ref{prop more elements in wedge}...
%{\color{red} Costante, dimostra la congettura!}

\begin{proposition}
If there exists $Z\in \mathfrak w _{\bar S_W} \cap \Int(  S_W ) $, 
%the conjecture holds,
 then every set that has precisely constant normal with respect to $W$ is a  upper-graph in   the $Z$-direction with respect to a H\"older function (in Euclidean coordinates).
\end{proposition}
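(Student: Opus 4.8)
The plan is to pass to the Carnot--Lebesgue representative $\tilde E$ of $E$ and to read off the two features of an upper-graph --- vertical monotonicity and a modulus of continuity --- from the two hypotheses on $Z$ \emph{separately}. By Remark~\ref{rmk:constant:constant_precise} a precisely-constant-normal set has constant normal, so Theorem~\ref{main:thm2} applies: $\tilde E$ is regularly open, has contractible complement, and enjoys both the $C$-cone property for the open cone $C:=\Int(S_W)$ and the $\Cl(S_W)$-cone property. First I would fix exponential coordinates $\exp\colon\mathfrak g\to\G\cong\R^n$, choose a linear complement $\Pi$ of $\R Z$ in $\mathfrak g$, and write each point uniquely as $\pi+tZ$ with $\pi\in\Pi$, $t\in\R$, using the Euclidean (vector-space) operations of the chart.

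The membership $Z\in\mathfrak w_{\bar S_W}$ gives $\exp(tZ)\in\Cl(S_W)$ for every $t>0$, so the $\Cl(S_W)$-cone property yields $p\exp(tZ)\in\tilde E$ whenever $p\in\tilde E$ and $t>0$; that is, $\tilde E$ is invariant under the forward flow of the left-invariant field $Z$. I would then record that the integral lines of $Z$ foliate $\R^n$ transversally to $\Pi$, so that $\tilde E$ meets each of them in an open upper ray and is therefore the epigraph, in the $Z$-direction, of the function $f\colon\Pi\to\R\cup\{\pm\infty\}$ given by $f(\pi):=\inf\{t : \pi+tZ\in\tilde E\}$. This is cleanest when $Z$ lies in the last layer (hence is central), since then $p\exp(tZ)=p+tZ$ in the chart and the flow lines are the straight Euclidean lines $\pi+\R Z$; in general one must reconcile the curved orbit with the straight line using the same anisotropic estimates as in the next step.

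For the regularity of $f$ I would use the second hypothesis, $\exp(Z)\in\Int(S_W)=C$. Openness of $C$ furnishes a Euclidean ball $B(Z,r)\subseteq C$, and since $C$ is a cone we get $\delta_\lambda\big(B(Z,r)\big)\subseteq C$ for all $\lambda>0$. Decomposing $Z+v=\sum_{j=1}^s(Z_j+v_j)$ along the stratification, the identity $\delta_\lambda(Z+v)=\sum_j\lambda^j(Z_j+v_j)$ shows that $C$ contains, around the curve $\lambda\mapsto\delta_\lambda Z$, an anisotropic neighborhood whose transverse ($\Pi$-)spread is comparable to $\lambda$ while its extent in the deepest active layer is comparable to $\lambda^s$. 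Translating to a boundary point $q\in\partial\tilde E$, the inclusion $qC\subseteq\tilde E$ together with $qC^{-1}\subseteq\tilde E^c$ (Remark~\ref{cone_prop_complement}) confines $\partial\tilde E$ to the complement of $qC\cup qC^{-1}$ at every scale $\lambda$. Comparing the horizontal scale $\lambda$ with the admissible vertical range then bounds the oscillation of $f$ by $|\pi-\pi'|^{1/s}$, giving local Hölder continuity with exponent $1/s$ governed by the step $s$.

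The main obstacle is exactly the one emphasized for $\mathbb F_{23}$: the homogeneous cone $C$ is anisotropic and in general contains \emph{no} Euclidean cone, so no naive transversality argument produces a graph. The hypothesis $Z\in\mathfrak w_{\bar S_W}\cap\Int(S_W)$ is precisely what repairs this --- the interior point supplies a genuine Euclidean ball $B(Z,r)$, while wedge membership makes the straight $Z$-ray available for monotonicity --- and the homogeneity $\delta_\lambda$ propagates both to all scales. The delicate points to be handled carefully are (i) reconciling the group orbit $t\mapsto p\exp(tZ)$ with the straight Euclidean line $\pi+\R Z$ when $Z$ is not central, and (ii) tracking the layer weights through the anisotropic ball so as to pin down the Hölder exponent $1/s$ uniformly in the base point; these constitute the technical heart of the argument.
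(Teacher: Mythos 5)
Your proposal follows essentially the same two-step route as the paper: wedge membership gives forward invariance along $\exp(\R_+ Z)$ and hence the graph structure, while $\exp(Z)\in\Int(S_W)$ together with the $\delta_\lambda$-homogeneity of the cone supplies the H\"older modulus. The paper in fact compresses the second step into the single assertion that a homogeneous cone containing the $Z$-ray in its interior is itself a H\"older upper-graph, so your anisotropic dilated-ball estimate (with the exponent $1/s$ coming from the deepest layer) is a reasonable expansion of what the paper leaves implicit. The one point where you diverge, and which you flag but do not resolve, is that you graph over the straight Euclidean lines $\pi+\R Z$ while your monotonicity statement lives on the group orbits $p\exp(\R Z)$; these differ unless $Z$ is central. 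The paper sidesteps this by foliating $\G$ with the orbits $\{p\exp(\R Z)\}_{p\in\G}$ and declaring $\partial E$ a graph over that foliation; equivalently, you can use coordinates adapted to $Z$ of the second kind, writing each point uniquely as $\exp(\pi)\exp(tZ)$ with $\pi$ in a complement of $\R Z$, in which the orbits become the straight vertical lines and your $\inf$-definition of $f$ is correct as written. With that adjustment your argument closes. A final minor remark: the paper works with $E$ itself rather than its Carnot--Lebesgue representative, which is possible because precise constant normal gives the exact inclusion $E\cdot S_W\subseteq E$ rather than an almost-everywhere one; your detour through $\tilde E$ proves the statement only for a representative.
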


\begin{proof} 
If $p$ is a point of the boundary of such a precisely constant normal set $E$, then the cone $pS_W$ is in the set. By assumption, the half line $p\exp(\R_+ Z)$ does not intersect $\partial E$. Hence, each leaf of the foliation $\{ p\exp(\R  Z) \}_{p\in \G}$ meets $\partial E$ in one point, that is, $\partial E$
is a graph  in   the $Z$-direction and $E$ is the upper-graph.
Moreover,   the half line $p\exp(\R_+ Z)$ belongs to the interior of $S_W$ and the semigroup $S_W$ is a cone with respect to the intrinsic dilation. Since every such cone is an upper-graph in the $Z$-direction of a    H\"older function (in Euclidean coordinates), then the same conclusion must hold for the graphing function of $E$.
\end{proof}
In fact, the graphing function of the above proposition satisfies a Lipschitz intrinsic condition, similarly as in \cite{Franchi_Serapioni_2016}. However, it is important to stress that in our case the graphing direction may not be a horizontal one.

\section{The free Carnot group of rank $2$ and step $3$}
  
In this section we shall focus our attention on a specific Carnot group where many pathologies appear. Such a group is denoted by $\mathbb{F}_{23}$ and is called the {\em free Carnot group of rank $2$ and step $3$}, because any other Carnot group of  rank $2$ and step $3$ is a quotient of it.

The Lie algebra of $\mathbb{F}_{23}$ is 5 dimensional and it is generated by two vectors, which we call $X_1$ and $X_2$. A basis of the Lie algebra is completed to $X_1,\ldots, X_5$ for which the only non trivial bracket relations are:
%The bracket relations are:
$$X_3=[X_2,X_1],\;
X_4=[X_3,X_1],\;
X_5=[X_3,X_2].$$

The Lie group $\mathbb{F}_{23}$ is the simply connected Lie group with such a Lie algebra. Moreover, the Lie algebra has a natural stratification where the first layer is $V_1:=\Span\{X_1, X_2\}$. See \cite{LeDonne:Carnot} for an introduction to Carnot groups and stratifications.

Since $\mathbb{F}_{23}$ is a free Carnot group of rank 2 then there is an action of the general linear group GL$(2\R)$ by Lie automorphisms induced by the standard action of 
GL$(2\R)$ on
$\Span\{X_1, X_2\}$. Consequently, any pair of linearly independent horizontal vectors $X,Y\in V_1$ are equivalent to $X_1, X_2$ up to an automorphisms that preserves the stratification. For our purposes, this means that it is not restrictive to only study sets that have constant normal equal to $X_2$.
  
  We shall work in some specific coordinate system for  $\mathbb{F}_{23}$. These are called {\em exponential coordinates of the second kind} with respect to the chosen bases. Namely, an arbitrary point of  $\mathbb{F}_{23}$ is uniquely represented as
$
\exp({x}_5 X_5)\exp({x}_4 X_4)\cdot\ldots\cdot\exp({x}_1 X_1)
$
for $(x_1, \ldots, x_5)\in \R^5$.
We use such identification of  $\mathbb{F}_{23} $ with $ \R^5$. 
Using the Baker-Campbell-Hausdorff formula one computes (see Section~\ref{sec:coords}) the group law:

\begin{equation}\label{gp_law}L_x(y)=x\cdot y=
(x_1+y_1, x_2+y_2,
x_3+y_3 -x_1y_2, \hspace{4cm}\end{equation}
$$
\hspace{5cm}x_4+y_4 - x_1 y_3 +\frac{1}{2} x_1^2y_2,
x_5+y_5+x_1x_2y_2+\frac{1}{2}x_1y_2^2-x_2y_3).
$$
 
    The left-invariant vector fields $X_{1},X_{2}$ in these coordinates are
\begin{equation} \label{LIVF_F23}X_{1} = \partial_{1}\qquad \text{ and  } \qquad
X_{2}  =\partial_{2} -x_{1}\partial_{3} + \frac{x_{1}^{2}}{2}\partial_{4} + x_{1}x_{2}\partial_{5}.
\end{equation}
The other elements of the basis will not be needed. For completeness, we say that they have   the following form: 
$$
X_{3}  =  \partial_3-  x_{1}\partial_{4} -x_{2}\partial_{5} ,
\qquad X_{4}=\partial_4, 
\qquad X_{5}=\partial_5 .$$

\subsection{Semigroups generated}
In this section we shall study the semigroup $S_W$, as defined in \eqref{def:SW}, for the following three options for $W$:
$$W_1:= \{a X_1 +b X_2 :a\in \R, b\geq 0\}, \quad
W_2:= \{a X_1 +b X_2 :a\in \R, b> 0\}, $$
\begin{equation}\label{Wi}
 \qquad \text{ and  } \qquad
W_3 := \{a X_1 :a\in \R \} \cup \{  b X_2 : b> 0\}, 
.\end{equation}
Namely, the first is a closed horizontal half-space, the second one is an open horizontal half-space, and the third is the union of a line and a semi-line.
The properties that we shall prove are summarized in the following list:
\begin{itemize}
\item[i)] Each among $S_{W_1}$, $S_{W_2}$, and $S_{W_3}$ is not open and not closed. Indeed, each one does not contain $\exp(  [X_1,X_2])$ which is  in the closure, and it contains $\exp(\R_+ X_2)$  but not in the interior. Actually, we have 
\begin{equation}\label{No horizontal vectors in the interior}
\exp (V_1) \cap {\Int}(S_{W_i})=\emptyset. \end{equation}

\item[ii)] No point $\exp(X)$ with $X\in V_1$ is in their interior.
\item[iii)] With the notation \eqref{(exp(W))^k}, we have that  for $k>6$ we have 
$S_{W_1}=(\exp(W_1))^k$ and 
$S_{W_2}:=(\exp(W_2))^k$. Nonetheless, there is no $k\in \N$ such that $S_{W_3}=(\exp(W_3))^k$
\item[iv)] For $k=3$ we have  that $(\exp(W_1))^k$ has interior but it is not equal to
$S_{W_1}$. Still 
$\Cl(S_{W_1})=\Cl((\exp(W_1))^3)$. 
\end{itemize}

We shall give a precise expression for $S_{W_1}$. However, a consequence is the following containment.
Define the polynomial
\begin{equation}
\label{poly_F23}	P(x):=x_2^3x_4 -  2x_2^2 x_3^2 - 6 x_2x_3 x_5 - 6 x_5^2,\end{equation}
which is homogeneous of degree 6 with respect to the intrinsic dilations.
The semigroups  satisfies
$$ \{P(x) >0, x_2>0 \} \subseteq S_{W_2} \subseteq S_{W_1} \subseteq \{P(x) \geq0, x_2\geq 0 \} .$$
This containment will show that there are no horizontal vectors in the interior of the semigroups, i.e, \eqref{No horizontal vectors in the interior} holds.
%For $x_2=1$ becomes
%$$x_4 -  2 x_3^2 - 6 x_3 x_5 - 6 x_5^2$$
In particular, the set $ \{P(x) >0, x_2>0 \} $ provides an example of a precisely constant normal  set that is not a continuous graph in any horizontal direction.

We also point out that the closure of the three sets $S_{W_i}$, $i=1,2,3$, is the same. Indeed, this is due to the fact that any point in $S_{W_1}$ is obtained as the end point of a piecewise linear curve where each piece has derivative in $W_1$. However, every such piece can be approximated (uniformly) by a piecewise linear curve with derivatives in $W_3$. Hence, the closure of $S_{W_3}$ contains $S_{W_1}$. Since $S_{W_3}\subseteq  S_{W_2}\subseteq  S_{W_1}$, we get
$\Cl(S_{W_3})=  \Cl(S_{W_2})=  \Cl(S_{W_1})$.

%\newpage
\subsubsection{Case $W=W_1$} %Characterization of the semigroup in $\mathbb{F}_{23}$ and other related stuff}

%We work in $\mathbb{F}_{23} \cong \R^5$ in exponential coordinates of the second kind, in which the generators (first stratum $V_1$) are
%
%$$X_{1} = \partial_{1},$$ 
%\[
%\begin{split}
%X_{2} \ =\  &\partial_{2} -x_{1}\partial_{3} + \frac{x_{1}^{2}}{2}\partial_{4} + x_{1}x_{2}\partial_{5}.
%\end{split}
%\]

We first focus on characterizing the semigroup generated by $W:=\{\exp(a X_1 +b X_2):a\in \R, b\geq 0\}$. 

Using the group law \eqref{gp_law} (or alternatively  integrating the relevant ODE of the vector fields \eqref{LIVF_F23})  we have 
\begin{equation}\label{un_flusso}
\exp(t(a X_1+X_2)) =\left(at, t, -\frac{1}{2}at^2, \frac{1}{6}a^2 t^3, \frac{1}{3}at^3\right).\end{equation}

%$$L_x(y)=x\cdot y=
%(x_1+y_1, x_2+y_2,
%x_3+y_3 -x_1y_2,
%x_4+y_4 - x_1 y_3 +\frac{1}{2} x_1^2y_2,
%x_5+y_5+x_1x_2y_2+\frac{1}{2}x_1y_2^2-x_2y_3).$$

Using this formula, we shall compute the coordinates of the point $$\exp(t_1(a_1 X_1+X_2)) \exp(t_2(a_2 X_1+X_2)),$$ where $a_1, a_2 \in \R$ and $t_1, t_2 >0$; in other words, we are performing a zig-zag, starting at $0$ and flowing first for time $t_1$ along $a_1 X_1 + X_2$ and then (from the point $\exp(t_1(a_1 X_1+X_2))$ just reached) flowing for time $t_2$ along $a_2 X_1 + X_2$. Clearly, every point reached by any number of such zig-zags is in the semigroup.

The computation of $\left(a_1t_1, t_1, -\frac{1}{2}a_1t_1^2, \frac{1}{6}a_1^2 t_1^3, \frac{1}{3}a_1t_1^3\right)\cdot \left(a_2t_2, t_2, -\frac{1}{2}a_2t_2^2, \frac{1}{6}a_2^2 t_2^3, \frac{1}{3}a_2t_2^3\right)$ gives

$$\left(a_1 t_1 + a_2 t_2, t_1+t_2, -\frac{a_1 t_1^2}{2}-a_1 t_1 t_2-\frac{a_2 t_2^2}{2}, \frac{a_1^2 t_1^3}{6}+\frac{a_1^2 t_1^2 t_2}{2} + \frac{a_1 a_2 t_1 t_2^2}{2} + \frac{a_2^2 t_2^3}{6}, \right.$$ $$\left.\frac{a_1 t_1^3}{3} + a_1 t_1^2 t_2+ \frac{a_1 t_1 t_2^2}{2} + \frac{a_2 t_1 t_2^2}{2}+ \frac{a_2 t_2^3}{3}\right).$$

Since $X_1 = \partial_1$, we know that the semigroup is invariant in $x_1$ and we can therefore neglect the first coordinate obtained (since we are interested in characterizing the points in $\R^5$ that can be reached by zig-zags, given a point in the image we can change the first coordinate fixing the others by flowing along $X_1$); moreover, using the fact that the semigroup is invariant under subRiemannian dilations, and noticing that the only point in the semigroup with 0 second coordinate are those other form $(x_1,0,\ldots,0)$, we can fix the second coordinate to be $1$. (This forces the choice $t_2=1-t_1$; to reach such a point, we need to correctly scale the zig-zag.) We therefore have a map from $\R^2 \times [0,1]$ to $\R^5$, where the point $(a, b, c)$ in the domain is mapped to the point in $\R^5$ that is reached by the above zig-zag (we can fix the first coordinate to be $0$ by the additional flow along $X_1$) choosing $a_1=a$, $a_2 =b$, $t_1=c, t_2=1-c$ (by the choices made, this point will lie in the $3$-dimensional plane $\{x_1=0, x_2=1\}$, and we identify this with the target $\R^3$).

After substituting the variables and simplifying, the map $G:\R^2 \times [0,1] \to \R^3$ thus obtained reads

$G(a,b,c)=\hspace{6cm}$
$$ \left(\frac{1}{2}\left(a(c-2)c - b (1-c)^2\right), \frac{1}{6} \left( a^2(3-2c)c^2 + 3abc(c-1)^2 - b^2(c-1)^3\right), \right.$$ $$\hspace{9cm}\left. \frac{1}{6}\left(b(c^3-3c+2)-ac(c^2-3)\right)\right).$$
The Jacobian matrix of $G$ is
$$\hspace{0cm} \left[\begin{array}{ccc}
   \frac{(c-2)c}{2}  & -\frac{(c-1)^2}{2} & \frac{a(c-2)-2b(c-1)+ac}{2}\\
   \frac{3bc(c-1)^2+2a(3-2c)c^2}{6} & \frac{3a(c-1)^2c - 2b(c-1)^3}{6} & \frac{-2c^2 a^2 + 2(3-2c)ca^2 + 3b(c-1)^2a+6abc(c-1)}{6} \\
  -\frac{c(c^2-3)}{6} & \frac{c^3 -3c+2}{6} & \frac{-2ac^2 - a(c^2-3)+b(3c^2-3)}{6}
  \end{array} \right]
 $$
The Jacobian determinant of $G$ can be computed to be $\frac{1}{72}(c-1)^4 c^3 (a-b)^2$. This shows that the map $G$ is a diffeomorphism on $(\R^2 \times (0,1) ) \setminus \{(a,b,c):a=b\}$. 

Restricting to the set of critical points, we compute the image via $G$ of the surfaces $\{(a,b,0)\}$, $\{(a,b,1)\}$, $\{(a,a,c)\}$ and obtain respectively the curves in $\R^3$ expressed (parametrically) by
$$ (-\frac{b}{2}, \frac{b^2}{6}, \frac{b}{3})\,\,, \,\,(-\frac{a}{2}, \frac{a^2}{6}, \frac{a}{3})\,\,, \,\, (-\frac{a}{2}, \frac{a^2}{6}, \frac{a}{3}),$$
i.e., the curve $\{y=-xz\} \cap \{ 2x+3z=0\}$ in all three cases, using coordinates $(x,y,z)$ in the target $\R^3$.

For $(x,y,z)$ arbitrary, solving the system $G(a,b,c)=(x,y,z)$ (inverting $G$) yields the solution

$$\begin{array}{ccc}
   a=-\frac{2(2x^3 - 9xy-9yz)}{3(2x^2+6xz-y+6z^2)} & b=-\frac{3(4x^4+8x^3z-12x^2 y  - 36 xyz - 3y^2 - 18 yz^2)}{2(2x+3z)^3} &  c=-\frac{3(2x^2+6xz-y+6z^2)}{2x^2+6xz+3y},
  \end{array} 
 $$
 which is well-defined as long as the denominators do not vanish and as long as the third fraction is in the interval $[0,1]$ (since we restricted $c\in[0,1]$ in the definition of $G$). We already characterized the image of points with $c=0$ or $c=1$, so regarding $c$ we only need to understand when the third fraction is in $(0,1)$.
 
 Algebraic manipulation shows that 
 $$-\frac{3(2x^2+6xz-y+6z^2)}{2x^2+6xz+3y} = \frac{3\hat P}{3 \hat P + 2(2x+3z)^2},$$
 where $\hat P$ is the polynomial
 $$\hat P=\hat P(x,y,z):=y - 2x^2 - 6xz - 6z^2,$$
 therefore $c\in(0,1)$ if and only if $\hat P>0$. Note that $\hat P$ is also the denominator of the first fraction above, up to a multiplicative constant, therefore $\hat P>0$ guarantees that $a$ is well-defined. The only constraint left is that the denominator of $b$ be non-zero. 
 
 So far we have established that $\{(x,y,z):\hat P>0\} \setminus \{(x,y,z):2x+3z=0\}$ is in the image of $G$ restricted to $(\R^2 \times (0,1) ) \setminus \{(a,b,c):a=b\}$. 
 
 Consider the composition $Q\circ G$, where $Q(x,y,z)=2x+3z$. The function $Q\circ G$ vanishes at $(a,b,c)$ if and only if $G(a,b,c)\in \{2x+3z=0\}$. On the other hand, explicit computation gives $Q\circ G=-\frac{1}{2}(c-1)^2 c (a-b)$ and this function vanishes exactly on the set $\{a=b\} \cup \{c=0\} \cup \{c=1\}$, which is the set of critical points. We can therefore conclude that the image of $G$ restricted to $(\R^2 \times (0,1) ) \setminus \{(a,b,c):a=b\}$ is exactly the set $\{(x,y,z):\hat P>0\} \setminus \{(x,y,z):2x+3z=0\}$. 
 
 Putting all together, we have obtained that $G$ maps $\R^2 \times [0,1]$ to the set 
 $$\text{Im}(G)=\left(\{(x,y,z):\hat P>0\} \setminus \{(x,y,z):2x+3z=0\}\right) \cup \left(\{y=-xz\} \cap \{ 2x+3z=0\}\right).$$
 This is the upper-graph of a paraboloid ($\hat P>0$) from which we remove the plane $\{(x,y,z):2x+3z=0\}$ except for the curve $\{y=-xz\} \cap \{ 2x+3z=0\}$, which lies on the paraboloid. 
 
 Recalling the choices made of neglecting $x_1$ and fixing $x_2=1$, the above result says that if we embed $\text{Im}(G)\subset \R^3$ in $\{(0,1)\} \times \R^3\subset\R^5$ and extend it invariantly in $x_1$ and homogeneously (in the subRiemannian sense) in $x_2$ we obtain the set of points that can be reached using a zig-zag consisting of at most $3$ steps. We denote this set by $\tilde{S}$.
 Namely,
 $$\tilde{S} :=\{ (x_1    ,0, \lambda^2 x,\lambda^3 y,\lambda^3 z) : x_1\in \R, \lambda>0, (x,y,z)\in  \text{Im}(G)\}.$$

 \medskip
 
 We next claim that the semigroup $S$ generated by $\{\exp(a X_1 +b X_2):a\in \R, b\geq 0\}$ is the extension (invariantly in $x_1$ and homogeneously in the subRiemannian sense in $x_2$) of the set 
 $$ \{(x,y,z):\hat P>0\} \cup \left(\{y=-xz\} \cap \{ 2x+3z=0\}\right).$$
 To see this, we argue by first showing that the subvariety $R$ defined by the extension (invariantly in $x_1$ and homogeneously in the subRiemannian sense in $x_2$) of $ \{(x,y,z):\hat P>0\} \cap \{(x,y,z):2x+3z=0\}$, belongs to the semigroup. Pick any point $p\in R$ and consider $p S^{-1}$: if $p \notin S$ then we would have that $p S^{-1}$ cannot intersect $S$, in particular $p S^{-1}$ has to be contained in $R$. We know, however, that $S$, and a fortiori $p S^{-1}$, has non-empty interior (by the general theory or just by noticing that $\text{Im}(G)$ has non-empty interior) and so $p S^{-1}$ cannot be contained in $R$.
 
In the previous argument, one can  replace  $S$ with $\tilde{S}$, which also has non-empty interior. Consequently, we get the extra piece of information that any point in 
$$ \{(x,y,z):\hat P>0\} \cup \left(\{y=-xz\} \cap \{ 2x+3z=0\}\right)$$ can be reached using a zig-zag consisting of at most $6$ steps.
 
 The set 
 $$P(x):=x_2^3x_4 -  2x_2^2 x_3^2 - 6 x_2x_3 x_5 - 6 x_5^2>0, \,\,\, x_2\geq 0,$$
(this is the homogeneous polynomial of degree 6 obtained by extending the above $\hat P$) has constant normal equal to $X_2$. One way to check this, is by computing the derivative of $P(x)$ in the direction $X_{2}=\partial_{2} -x_{1}\partial_{3} + \frac{x_{1}^{2}}{2}\partial_{4} + x_{1}x_{2}\partial_{5}$, which gives
$$A=\frac{1}{2} x_2^3 x_1^2 + (-10x_2^2 x_3 -18 x_2 x_5)x_1 + (3x_2^2 x_4 -4 x_2 x_3^2 -6x_3 x_5);$$
viewing this as a polynomial of degree $2$ in $x_1$, the discriminant is $-6(x_4 x_2^5-2x_3^2 x_2^4 - 6 x_3 x_5 x_2^3 - 6 x_5^2 x_2^2)=-6x_2^2 P(x)$, which vanishes at $P=0$ and is negative on $P>0$. Therefore, since for $x_2\geq 0$ the polynomial $A$ goes to $+\infty$ for $x_1\to \pm \infty$, we get that $A$ is always non-negative when $P\geq 0$, in other words the $X_2$-monotonicity is satisfied. Also note that when $P>0$ and $x_2>0$ we have $A>0$, i.e., the derivative is strictly positive, which means that the flow along $X_2$ increases the value of $P$, in particular if we start from an interior point of $P\geq 0, x_2\geq 0$, then the flow cannot converge to any point on $P=0$.
This implies that the points of $S$ that lie on the boundary of the paraboloid cannot be reached from points in the interior.

Note that the critical curve $\{y=-xz\} \cap \{ 2x+3z=0\}$ extended invariantly in $x_1$ and homogeneously in $x_2$ is a subvariety $Z$ of (topological) dimension $3$ and coincides with $\exp(W) \exp(\R X_1)$: indeed, for the critical points $c=0$ or $c=1$ then we are choosing one of $t_1, t_2$ to be $0$, i.e., we are only flowing once (for time $1$) along $aX_1+X_2$ (for some choice of $a$). For the critical points $a=b$ we are flowing twice along the same vector $aX_1+X_2$, equivalently we are flowing along $aX_1+X_2$ for time $1$. Recalling that with $G$ we had restricted $x_2=1$ and neglected $x_1$, the previous observations say that $Z$ corresponds to the points reached by a flow along a vector in $W$ followed by a flow along $X_1$, i.e., $Z=\exp(W) \exp(\R X_1)$ as claimed.

What is left to prove, in order to complete the characterization of $S$, is that no other point with $P=0$, other than the points on $Z$, is in $S$. 
We established that if we start from an interior point then the flow will not converge to a boundary point, so the only way to reach a point on $P=0$ is to flow from a point that lies on $P=0$; this means, in view of the characterization of $\text{Im}(G)$ above, that we must start from a point in $Z$. However, $\exp(W) \exp(\R X_1)$ is invariant for the flow, so the argument is complete.

\medskip

%\noindent \textbf{CONCLUSIONS}
We summarize our conclusions with the following statement.
\begin{proposition}\label{S:exp2}
In the free group $\mathbb{F}_{23}$ equipped, as above, with exponential coordinates of the second kind,
the semigroup $S$ generated by $W:=\{\exp(a X_1 +b X_2):a\in \R, b\geq 0\}$ is $$S=\{
x\in \R^5 
%(x_1, x_2, x_3, x_4, x_5)
: P(x):=x_2^3x_4 -  2x_2^2 x_3^2 - 6 x_2x_3 x_5 - 6 x_5^2>0, \,\,\, x_2> 0\}
% \cup$$ $$
\cup \exp(W) \exp(\R X_1).$$
The set $\exp(W) \exp(\R X_1)$ has the coordinate expression $$\{P=0\} \cap \{x_2\geq 0\} \cap \{x_2^2 x_4=-x_3 x_5 \} \cap \{ 2x_2 x_3+3 x_5=0\}.$$
Every element in $S$ can be written as the product of 6 elements in $W$, i.e., $W^6=S$.
%It suffices to use a zig-zag with $6$ steps to reach any point in $S$. 
The set $W^3$ is a set with interior that is different than $S$, but its closure is $\overline{S}$.
%Using zig-zags with $3$ steps one does not reach every point in $S$. $3$ steps are however enough to cover an open set whose closure agrees with $\overline{S}$.
\end{proposition}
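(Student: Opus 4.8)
The statement collects the explicit analysis carried out above, so the plan is mainly to organize that analysis into the four assertions, treating in turn the interior $\{P>0,x_2>0\}$, the plane-piece $R$, the boundary $\{P=0\}$, and the step count. Throughout I would exploit the two symmetries of $S$: invariance in $x_1$ (since $X_1=\partial_1$) and homogeneity under the intrinsic dilations, which reduce every question to the slice $\{x_1=0,x_2=1\}$ parametrized by the map $G\colon\R^2\times[0,1]\to\R^3$.

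First I would record that $\operatorname{Im}(G)$ is precisely the three-step reachable set on that slice. The Jacobian determinant $\tfrac{1}{72}(c-1)^4c^3(a-b)^2$ isolates the critical locus, the explicit inversion of $G$ identifies the non-critical image with $\{\hat P>0\}\setminus\{2x+3z=0\}$, and the computation of the critical image gives the curve $\{y=-xz\}\cap\{2x+3z=0\}$. Extending invariantly in $x_1$ and homogeneously in $x_2$ yields $\tilde S=W^3$; this already shows that $W^3$ has interior and contains all of $\{P>0,x_2>0\}$ off the hypersurface $\{2x_2x_3+3x_5=0\}$.

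Next I would upgrade $\tilde S\subseteq S$ to $\{P>0,x_2>0\}\subseteq S$. The only missing part is the piece $R$ lying over $\{2x_2x_3+3x_5=0\}$ with $P>0$. For $p\in R$ I argue by contradiction: if $p\notin S$, the semigroup property forces $pS^{-1}\cap S=\emptyset$, so the open set $pS^{-1}$ lies in $S^c$; but $S\supseteq\tilde S$, whose complement inside the paraboloid region is contained in the hypersurface $\{2x_2x_3+3x_5=0\}$, so an open set cannot avoid $S$ there, a contradiction. Running the same argument with $\tilde S$ in place of $S$ (it too has interior) gives the sharper statement that each point of $\{P>0,x_2>0\}$ already lies in $\tilde S\cdot\tilde S=W^6$. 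Since $Z=\exp(W)\exp(\R X_1)\subseteq W^2$ (any $\exp(aX_1)$ lies in $\exp(W)$), this yields $S=\{P>0,x_2>0\}\cup Z\subseteq W^6$, hence $W^6=S$.

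The main obstacle, which I would treat most carefully, is showing that the only points of $S$ on $\{P=0\}$ are those of $Z$. Here I would use the derivative $X_2P=A$ computed above, whose discriminant as a quadratic in $x_1$ equals $-6x_2^2P$; thus $A\geq0$ whenever $P\geq0,\,x_2\geq0$ (giving the $X_2$-monotonicity of $\{P>0,x_2>0\}$) and, crucially, $A>0$ strictly when $P>0,\,x_2>0$, so the $X_2$-flow strictly increases $P$ and cannot carry an interior point onto $\{P=0\}$. Consequently a boundary point of $S$ can be reached only from another boundary point, and the description of $\operatorname{Im}(G)$ forces the starting locus to be $Z$; since $Z$ is invariant under the admissible flows, no point of $\{P=0\}\setminus Z$ enters $S$. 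The coordinate expression for $Z$ then follows by homogenizing the critical curve $\{y=-xz\}\cap\{2x+3z=0\}$. Finally, $W^3=\tilde S$ has interior yet omits $R$, so $W^3\neq S$, while $\overline{W^3}=\overline{\tilde S}=\overline S$ because $\tilde S$ contains a set open and dense in the closed paraboloid region and deleting the measure-zero plane does not affect the closure, consistently with the earlier identity $\overline{S_{W_3}}=\overline{S_{W_2}}=\overline{S_{W_1}}$.
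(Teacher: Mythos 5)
Your proposal is correct and follows essentially the same route as the paper: the reduction to the slice $\{x_1=0,\,x_2=1\}$ via $x_1$-invariance and dilations, the analysis of the map $G$ (Jacobian, inversion, critical locus), the identification of $W^3$ with the extension of $\operatorname{Im}(G)$, the nonempty-interior argument applied to $p\tilde S^{-1}$ to absorb the plane piece $R$ and obtain $W^6=S$, and the discriminant computation for $X_2P$ showing the flow from the interior cannot reach $\{P=0\}$, so that the only boundary points of $S$ are those on the critical curve $\exp(W)\exp(\R X_1)$. The only differences are presentational (e.g.\ your explicit remarks that $Z\subseteq W^2$ and that $\overline{W^3}=\overline{S}$ because the deleted hypersurface has empty interior), which match what the paper leaves implicit.
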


\subsubsection{Case $W=W_2$}  

Regarding the case $W:=\{\exp(a X_1 +b X_2):a\in \R, b> 0\}$, we clearly obtain a semigroup $S_{W}$  that is contained in the previous semigroup $S_{W_1}$. Nonetheless, the set $  \exp(W) \exp(\R X_1)$ is in such semigroup, but it is not in the interior, since it is not in the interior of $S_{W_1}$.
We therefore understand that, also in this case, the set $S_{W}$ is not open in the Carnot group, even if $W$ is   open in $V_1$. %, not closed.
We expect that in fact $S_{W}=S_{W_1}\setminus \exp(\R X_1)$. 

%We first focus on characterizing the semigroup generated by $W:=\{\exp(a X_1 +b X_2):a\in \R, b\geq 0\}$. 
\subsubsection{Case $W=W_3$} \label{sec:caso3}

Since we are in a free group, instead of studying the semigroup generated by $W_3$ from \eqref{Wi} we may study the one generated by  
$$
W := \{a X_1 :a\in \R \} \cup \{  b (X_1+ X_2) : b> 0\}
.$$
We are going to show that the point $\exp(X_2)$ is in the closure of $S_W$,
 but not in any $(\exp(W))^k$, with $k\in \N$. Consequently, we have strict inclusions
$$(\exp(W))^k\subsetneq S_W,\quad %\text{ for all }
\forall k\in \N.$$ 
% none of these last sets equal the semigroup generated.
Since the sets $(\exp(W))^k$ are increasing, we can clearly restrict to even values of $k$. Then, for $k\in \N$, an arbitrary point in $(\exp(W))^{2k}$ has the form
\begin{equation}\label{un_punto}q:= \exp(a_1 X_1) \exp(b_1 (X_1+ X_2) ) \ldots \exp(a_k X_1) \exp(b_k (X_1+ X_2) )   ,
\end{equation}
with $a_j \in \R$ and $b_j> 0$, for $j=1,\ldots, k$. 

We now check that this point $q$ differs from $\exp(X_2)$ in the coordinates.
Using the group law \eqref{gp_law} and the calculation in \eqref{un_flusso}, 
we get that when a point $p=(p_1,\ldots, p_5)$ is right multiplied by 
$\exp(t( X_1+X_2))$ then the first and fourth components of the point obtained are respectively
$$(p\exp(t( X_1+X_2)))_1=p_1+t,$$
$$(p\exp(t( X_1+X_2)))_4 = p_4 +\frac{(p_1+t)^3}{6}- \frac{p_1^3}{6}.$$
If $t>0$, we can bound
$$(p\exp(t( X_1+X_2)))_4 =
 p_4 +\frac{1}{2}t (p_1+\frac{t}{2})^2+ \frac{1}{24}t^3 
 \geq p_4  + \frac{1}{24}t^3 .$$
 Recall that instead $p\exp(aX_1)= (p_1+ a, p_2, \ldots, p_5)$.
We deduce that the point $q$ from \eqref{un_punto} satisfies
$$q_2= b_1+\ldots+b_k$$
$$q_4   \geq   \frac{1}{24}(b_1^3+\ldots+b_k^3) .$$
Now, if $q$ is close enough to $\exp(X_2)= (0, 1, 0, \ldots, 0)$, then there exists some $j$ such that 
$b_j>\frac{1}{2k}$, since the $b_i$'s are positive.
Consequently, $q_4   \geq   \frac{1}{24}(\frac{1}{2k})^3$. We found a bound away from 0 of $q_4$. Therefore there 
is no sequence of $q$'s accumulating to $\exp(X_2)$.

\subsection{Exponential coordinates (of first kind)}\label{sec:coords}
We next rephrase the previous results in the standard exponential coordinates. The purpose is to calculate the wedge of the semigroup $S$, which we calculated in Proposition~\ref{S:exp2}  using exponential coordinates of the second kind. 

Recall that the Baker-Campbell-Hausdorff formula for step-$3$ groups reads as
$$\exp X\exp Y= \exp\left(X + Y + \frac{1}{2}[X,Y] + \frac{1}{12}[X,[X,Y]] - \frac{1}{12}[Y,[X,Y]] \right).$$

Using this formula, we compute the products in exponential coordinates of first kind:
$$\exp(a_1 X_1+\ldots+a_5 X_5)\cdot \exp(b_1 X_1+\ldots+b_5 X_5)=\qquad\qquad\qquad\qquad\qquad\qquad\qquad\qquad $$
$$\qquad=
\exp\left( 
(a_1+b_1)  X_1+
(a_2+b_2) X_2+
(a_3+b_3+\dfrac{a_2b_1- a_1b_2}{2})  X_3 \right.\qquad\qquad$$ 
\begin{equation}\label{gp_law2}\qquad\qquad+
(a_4+b_4+ \dfrac{a_3b_1-a_1b_3}{2} + \dfrac{a_1(a_1b_2-a_2b_1)}{12} + \dfrac{b_1(a_2b_1-a_1b_2)}{12} )  X_4
\end{equation} $$
\left.\qquad\qquad\qquad\qquad\qquad
+(a_5+b_5 + \dfrac{a_3b_2-a_2b_3}{2} - \dfrac{a_2(a_2b_1-a_1b_2)}{12} + \dfrac{b_2(a_2b_1-a_1b_2)}{12}   )  X_5\right).
$$

Consequently, we relates the  exponential coordinates of first kind to the exponential coordinates of second kind:
If ${x}_1,\ldots, {x}_5$ are the coordinates of second kind and
$a_1,\ldots, a_5$ are the coordinates of first kind,
then the change of coordinates are the following:
$$
\exp({x}_5 X_5)\exp({x}_4 X_4)\cdot\ldots\cdot\exp({x}_1 X_1)=
%\qquad\qquad\qquad\qquad\qquad\qquad\qquad\qquad\qquad\qquad$$
%$$\qquad\quad= 
\exp\left( a_1 X_1+a_2 X_2 + a_3 X_3 +a_4 X_4+a_5 X_5\right) $$
if and only if
%NEED VERIFICATION!  see file Conti\_coordinate.pdf
%\begin{eqnarray*}
\begin{equation*}
\left\{\begin{array}{ccl}
a_1 &=&{x}_1 \\
a_2 &=&{x}_2\\
a_3 &=&{x}_3+\frac{{x}_1{x}_2}{2}\\
a_4 &=&{x}_4+\frac{{x}_1{x}_3}{2}+\frac{{x}_1^2{x}_2}{12}\\
a_5 &=&{x}_5+\frac{{x}_2{x}_3}{2} - \frac{{x}_1{x}_2^2}{12},
\end{array}
\right.
\end{equation*}
or, equivalently,
%\end{eqnarray*}
\begin{equation*}
\left\{\begin{array}{ccl}
{x}_1    &=&a_1\\
{x}_2    &=&a_2\\
{x}_3    &=&a_3 -\frac{ a_1 a_2}{2}\\
{x}_4    &=&a_4 +\frac{1}{6} a_1^2 a_2 - \frac{1}{2} a_1 a_3\\
{x}_5    &=&a_5+\frac{1}{3} a_1 a_2^2 - \frac{1}{2} a_2 a_3.
\end{array}
\right.
\end{equation*}
In other words, the above two systems define the change of coordinates.
Notice that, using this formulas together with \eqref{gp_law2},
one obtains \eqref{gp_law}.

%More explicitly, we have
%$$
%\exp(\alpha_5 X_5)\exp(\alpha_4 X_4)\cdot\ldots\cdot\exp(\alpha_1 X_1)=\qquad\qquad\qquad\qquad\qquad\qquad\qquad\qquad\qquad\qquad$$
%$$\qquad\quad=
%\exp\left( \alpha_1 X_1+\alpha_2 X_2 + (\alpha_3+\dfrac{\alpha_1\alpha_2}{2})X_3 +(\alpha_4+\dfrac{\alpha_1\alpha_3}{2}+\dfrac{\alpha_1^2\alpha_2}{12})X_4+
%(\alpha_5+\dfrac{\alpha_2\alpha_3}{2} - \dfrac{\alpha_1\alpha_2^2}{12}) X_5\right),$$
%and viceversa
%$$
%\exp\left( a_1 X_1+a_2 X_2 + a_3 X_3 +a_4 X_4+a_5 X_5\right) 
%=\qquad\qquad\qquad\qquad\qquad\qquad\qquad\qquad\qquad\qquad$$
%$$\qquad=
%\exp((a_5 +\dfrac{1}{3} a_1 a_2^2 - \dfrac{1}{2} a_2 a_3) X_5 )\exp((a_4 +\dfrac{1}{6} a_1^2 a_2 - \dfrac{1}{2} a_1 a_3)  X_4)
%\cdot\exp((a_3 -\dfrac{1}{2} a_1 a_2)X_3) 
%\cdot\exp(a_2 X_2) \cdot\exp(a_1 X_1) $$

\subsection{The semigroup in exponential coordinates and its wedge}
In Proposition~\ref{S:exp2} we calculated the semigroup $S$ generated by $W:=\{\exp(a X_1 +b X_2):a\in \R, b\geq 0\}$ 
in exponential coordinates of the second kind. Using the above change of variables we claim  that   
in exponential coordinates (of first kind) the 
   semigroup is 
$$S=\{
a\in \R^5
%(a_1, a_2, a_3, a_4, a_5)
: -\frac12 a_2^2 a_3^2 +  a_2^3 a_4 -  a_1 a_2^2 a_5 -
6 a_5^2  >0, \,\,\, a_2> 0\} \cup
%$$ $$\cup 
\exp(W) \exp(\R X_1).$$
Indeed, one just has to verify that the polynomial $P(x):=x_2^3x_4 -  2x_2^2 x_3^2 - 6 x_2x_3 x_5 - 6 x_5^2$ from Proposition~\ref{S:exp2} in these other coordinates $(a_1, a_2, a_3, a_4, a_5)$ becomes 
 \begin{equation}\label{Polyexp1}
\tilde P(a) = -\frac12 a_2^2 a_3^2 +  a_2^3 a_4 -  a_1 a_2^2 a_5 -
6 a_5^2.
 \end{equation}
 Recall from Proposition~\ref{S:exp2} that 
$ \exp(W) \exp(\R X_1)\subseteq \{\tilde P=0\}.$

%\subsection{The Lie wedge of the semigroup} %{Some notions from the theory of Lie semigroup}
With these expression above, it is easy to calculate the wedge of the semigroup according to Definition~\ref{def:wedge}. In fact, we claim that 
the wedge is 
\begin{equation}\label{wedgeF23}
 \mathfrak w _{\bar S}:=\{ a_5=0, a_2= 0\}\cup \{ a_5=0, a_2> 0,   2 a_2 a_4 \geq a_3^2\} .
 \end{equation}
Indeed, 
first observe that $\log( \bar  S) = \{\tilde P \geq 0, a_2\geq 0\}$ and so
if $X\simeq (a_1, a_2, a_3, a_4, a_5)$ is such that  $\R_+ X \subseteq \log(  \bar S) $ then  
$$\tilde P(t a) = -\frac12 t^4 a_2^2 a_3^2 + t^4 a_2^3 a_4 -  t^4a_1 a_2^2 a_5 -
6 t^2a_5^2\geq 0 ,\qquad \forall t >0.$$
Hence,
$$0\leq \lim_{t\to 0^+ }\frac{1}{t^2} \tilde P(t a) = -
6  a_5^2,$$
which implies $a_5=0$.  
The polynomial in \eqref{Polyexp1} when $a_5=0$ becomes
$ -\frac12   a_2^2 a_3^2 +   a_2^3 a_4  \geq 0$.
If $a_2=0$ that we have no extra condition. Instead, when $a_2>0$, then we get
$2 a_2 a_4 \geq a_3^2$.
Hence, one inclusion in  \eqref{wedgeF23} is proved. Regarding the other inclusion, just observe that the right-hand side of  \eqref{wedgeF23} is a cone in $\log( \bar  S)$.
Hence \eqref{wedgeF23} is proved.

We shall now verify that $ \mathfrak w _{\bar S } \cap \Int(  S  ) \neq \emptyset $, and verify Conjecture~\ref{conj_intersect} in $\mathbb{F}_{23}$.
Indeed, in exponential coordinates this intersection is 
\begin{eqnarray*}
 \mathfrak w _{\bar S} \cap \Int(  S )
 &=&(\{ a_5=0, a_2= 0\}\cup \{ a_5=0, a_2> 0,   2 a_2 a_4 \geq a_3^2\} )
\cap(\{
\tilde P(a) >0 ,   a_2> 0\})\\
&=& \{ a_5=0, a_2> 0,   2 a_2 a_4 > a_3^2\}.
%-\frac12 a_2^2 a_3^2 +  a_2^3 a_4 -  a_1 a_2^2 a_5 - 6 a_5^2.
\end{eqnarray*}

\begin{remark}One pathology of the above semigroup $S$ is that $\exp(W)$ is contained in its boundary. Even worst, we claim that the set 
$\Ad_{\exp(\R X_1)}(\R X_1 + \R_+ X_2)$, which is in  $\mathfrak w _{\bar S_W}$ by Proposition~\ref{prop more elements in wedge} is not in the  intersection  $ \mathfrak w _{\bar S} \cap \Int(  S )$.
Indeed, for all $a,b
\in 
\R$ and $c>0$, we have 
\begin{eqnarray*}
\Ad_{\exp(a X_1)}(b X_1 + c X_2) &=& b X_1 + c X_2 + [ a X_1, b X_1 + c X_2] +\frac12 [ a X_1, [ a X_1 ,  b X_1 + c X_2]]\\
%&=& b X_1 + c X_2 + [ a X_1,  c X_2] +\frac12 [ a X_1, [ a X_1 ,    c X_2]]\\
&=& b X_1 + c X_2 -ac  X_3 +\frac12 a^2 c  X_4\\
&=&( b , c , -ac  ,\frac12a^2 c  ,0).\\
\end{eqnarray*}
This point is not in the interior of $S$ since for this point we have 
$$\tilde P(a)  = -\frac12  a_2^2 a_3^2 +   a_2^3 a_4 
=
-\frac12   c^2 (-ac)^2 + c^3 (\frac12a^2 c) 
= 0.$$
Nonetheless, the point 
$X_2 + \Ad_{\exp( X_1)}(    X_2) = 2    X_2 -   X_3 +\frac12    X_4 = (0,2,-1,1/2,0)$ is the sum of two elements in 
$ \mathfrak w _{\bar S}$ and so it is in $ \mathfrak w _{\bar S}$, and, moreover, it is
 in the interior of $\log(S)$ since the value of $\tilde P$ is 
$-\frac12  2^2 (-1)^2 +   2^3 \frac12  = -2+4=2>0$.

More explicitly, one can check that the point 
$\exp(X_2 + \Ad_{\exp( X_1)}(    X_2)) $  is in $S$ since it can be written as
\begin{equation}\label{point_in_F23}
\exp( X_2 + \Ad_{\exp( X_1)}(    X_2) )  =
  \exp(    \tfrac{1}{2} X_2)  \exp(   X_1)  \exp( X_2)  \exp( -  X_1)  \exp( \tfrac{1}{2} X_2) . 
\end{equation}
%OR
%$$\exp( X_2 + \Ad_{\exp( X_1)}(    X_2) )  =
%\exp( \dfrac{1}{3}  X_1)  \exp(   X_2)  \exp(   X_1)  \exp( \dfrac{1}{2} X_2)  \exp( -\dfrac{4}{3} X_1)  \exp( \dfrac{1}{2} X_2) . 
%$$
Moreover,
the point $\exp(X_2 + \Ad_{\exp( X_1)}(    X_2)) $  is in the interior of $S$ since the map
$$(\eps_1, \ldots, \eps_5) \mapsto   \exp(    (\tfrac{1}{2}+\eps_1) X_2)  \exp(  (1+\eps_2) X_1)  \exp( (1+\eps_3)X_2)  \exp( (-1+\eps_4)  X_1)  \exp( (\tfrac{1}{2}+\eps_5)X_2) 
$$
is a submersion at $0$.
Indeed, a calculation gives that the images under the differential at 0 of such a map of the basis vectors
$\partial_{\eps_1}, \ldots, \partial_{\eps_5}$ are the left-pushed forward by $ \exp(X_2 + \Ad_{\exp( X_1)}(    X_2)) $ of
$$
%\dd L_g\Span\{ 
X_2 +X_5,\,
X_1 -\tfrac{3}{2} X_3 +X_4-\tfrac{9}{8} X_5 ,\,
X_2   - X_3 +\tfrac{1}{2} X_4-\tfrac{1}{2} X_5  ,\,
X_1  -\tfrac{1}{2} X_3 -\tfrac{1}{8} X_5  ,\,
X_2
%  \}  ,
  .$$
%  respectively, where $g=\exp(X_2 + \Ad_{\exp( X_1)}(    X_2)) $.
These last 5 vectors are a basis of the Lie algebra. Hence there is a neighborhood of the point   \eqref{point_in_F23} that is inside $S$.
\end{remark}

%TO BE CONCLUDED 
%
%Is $S$ an upper-graph of a continuous function? Take something like $(0,1,1,1,0)$.

\begin{remark}
\label{rem:evidence}

A similar calculation can be also done in the
free-nilpotent  group of rank 2 and step 4. Indeed, in the Carnot group $\mathbb{F}_{24}$ the point
$$\hspace{-7cm}\exp( 2 X_2 + \Ad_{\exp(-\tfrac{1}{2} X_1)}(    2X_2)  + \Ad_{\exp(\tfrac{1}{2} X_1)}(    X_2) ) $$ $$ \qquad=
  \exp(    X_2)  \exp(    -\tfrac{1}{2}   X_1)  \exp( X_2)  \exp(   X_1)  \exp( X_2)  \exp( -  X_1)  \exp( X_2)  \exp( \tfrac{1}{2}  X_1)  \exp(  X_2) 
$$
lies in $ \mathfrak w _{\bar S_W} \cap \Int(  S_W ) $, where $W$ is the horizontal half space with normal $X_2$. We omit the calculation, being similar to the one done above in the case of $\mathbb{F}_{23}$.
\end{remark}

%\subsection{Upper-graph/PDE characterization} (uniamo questa sezione con la prossima?)
% {\color{blue}\begin{remark}
%  \label{remark:upper-graph}
% All constant normal sets in rank $2$ are represented, in exponential coordinates of II type, by upper-graphs of functions that are independent of the $x_1$-variable. More precisely, assuming without loss of generality that the normal is $X_2$ and choosing a representative $E$ that is precisely $X_2$-monotone and open (this exists by Theorem~\ref{main:thm2}), then 
% $$E= \{ x_2 > G(x_3, \ldots, x_n) \},$$ 
% with $G$ upper-semi-continuous.
% 
% To prove this, notice that $X_2|_{\{x_1=0\}}= \partial_2$ and thus $(0, x_2, x_3, \dots, x_n)\exp(tX_2)=(0, x_2+t, x_3, \dots, x_n)$; the precise $X_2$-monotonicity implies that the function $G:\R^{n-2}\to \overline{\R}$ defined by $G(x_3, \dots, x_n) := \inf\{x_2: (0, x_2, x_3, \dots, x_n) \in E\}$ (with the convention $\inf\{\emptyset\}=+\infty$) has the property that $E\cap \{x_1=0\}$ is given by $\{0\} \times \{x\in \R^{n-1}: x_2 >G(x_3, \dots, x_n)\}$. The $X_1$-invariance (notice that $X_1= \partial_1$) gives that $E= \{ x_2 > G(x_3, \ldots, x_n) \}$. Since $E$ is open, $G$ is upper-semi-continuous.
% \end{remark}
% It can be proved that $G$ satisfies a PDI ma forse e' inutile.
% 
% See 
% Example~\ref{example da PDI}.
 %}

%In exponential coordinates of second kind, the group law reads as
%$$L_x(y)=x\cdot y=
%(x_1+y_1, x_2+y_2,
%x_3+y_3 -x_1y_2,
%x_4+y_4 - x_1 y_3 +\frac{1}{2} x_1^2y_2,
%x_5+y_5+x_1x_2y_2+\frac{1}{2}x_1y_2^2-x_2y_3)$$

\subsection{Some examples of constant normal sets} % in $\mathbb{F}_{23}$}
We provide in this section some examples of constant-normal sets in $\mathbb{F}_{23}$, with normal $X_2$. In doing so, we also point out some sufficient conditions for a set to satisfy the constant-normal condition.

\begin{example}\label{ex:Calphabeta}
For $\alpha, \beta\geq 0$, the set
$$  {\mathcal C}_{\alpha, \beta}:=\left\{\;x\in\R^5\;:\; x_2\geq 0, \;(\alpha\; x_3+\beta\; x_5)^2\leq 2\;x_2 \;x_4 \;(\alpha+\beta \;x_2)^2\;\right \}.$$
  is a constant-normal set with normal $X_2$.
  
%  {\color{red}$x_4 \geq 0$ was redundant}
  
  %In particolare, per $\alpha=1$ e $\beta=0$ abbiamo il cono di Engel. Per $\alpha=0$ e $\beta=1$ abbiamo il cono $\hat{\mathcal C}$ {\color{blue}(called $K$ in section 5) }
  
   This two-parameter family of examples provides an interpolation between the two cones $\mathcal{C}_{0,1}=\{x: x_2\geq 0, x_5^2 \leq 2 x_2^3 x_4\}$, which  will be important in Sections~\ref{sec:controesempio1} and \ref{sec:controesempio2}, and $\mathcal{C}_{1,0}=\{x: x_2\geq 0, x_3^2 \leq x_2 x_4 \}$, which is the lift of the cone in the Engel group, as analysed in \cite[Example 3.31]{Bellettini-LeDonne}, recalling that
the Engel group is naturally a quotient of $\mathbb{F}_{23}$.
%, we note that $\mathcal C_{1,0}$ is .
\end{example}
%Dimostrazione rozza per alfa e beta eguali a 1.
%$$|q_5+q_3|=|p_3- p_1t+p_5+p_1 t(p_2+t/2)|
%\leq   |p_3|+ |p_1|t +  |p_5|+|p_1 t(p_2+t/2)|$$
%$$\leq   |p_1|t +|p_1| t(p_2+t)   +  (2 p_2 p_4)^{1/2}  (1 +p_2 ) $$
%$$\leq   |p_1| t(1+p_2+t)   +  (2 p_2 p_4)^{1/2}  (1 +p_2 ) $$
%$$\leq  (1+p_2+t) \left  ( (p_1^2 t)^{1/2} (t)^{1/2} + (2p_4)^{1/2} (p_2)^{1/2}   \right)$$
%$$\leq  (1+p_2+t)  \left (p_2+t\right)^{1/2}  \left (  2p_4     +p_1^2 t    \right  )^{1/2}   =(1+q_2)(q_2)^{1/2} \;(2q_4)^{1/2}.$$
%\qed

To check the condition, we compute the derivative of the polynomial $P(x)=2\;x_2 \;x_4 \;(\alpha+\beta \;x_2)^2-(\alpha\; x_3+\beta\; x_5)^2$ in the direction $X_{2}=\partial_{2} -x_{1}\partial_{3} + \frac{x_{1}^{2}}{2}\partial_{4} + x_{1}x_{2}\partial_{5}$, which gives
$$d(x)=2 x_4(\alpha+\beta x_2)^2 + 4 \beta x_2 x_4(\alpha + \beta x_2)+2 \alpha x_1 (\alpha x_3 + \beta x_5) + x_1^2 x_2 (\alpha+\beta x_2)^2 - 2 \beta x_1 x_2 (\alpha x_3 + \beta x_5).$$
We will check that $d(x)$ is non-negative on the set $P(x)\geq 0$. Viewing the expression for $d(x)$ as a polynomial of degree $2$ in $x_1$, the discriminant is given by
$$D=(\alpha x_3 + \beta x_5)^2 (\alpha-\beta x_2)^2 - 2 (\alpha+\beta x_2)^3 x_2 x_4 (\alpha + 3 \beta x_2).$$
We claim that $D \leq 0$ on the set $\{P\geq 0\}$: this concludes the proof because the coefficient of $x_1^2$ in $d(x)$ is non-negative ($x_2 \geq 0$ and the only cases in which the coefficient may vanish correspond to $E$ being equivalent to the empty set) so that the $X_2$-flux starting from a point in $\mathcal{C}_{\alpha,\beta}$ must always remain (for positive times) in $\mathcal{C}_{\alpha,\beta}$. To check the claim, note that $P\geq 0$ implies 
$$D \leq 2\;x_2 \;x_4 \;(\alpha+\beta \;x_2)^2 (\alpha-\beta x_2)^2 - 2 (\alpha+\beta x_2)^3 x_2 x_4 (\alpha + 3 \beta x_2)$$
so we only need to check that $(\alpha-\beta x_2)^2 \leq (\alpha+\beta x_2) (\alpha + 3 \beta x_2).$ This is immediate by expanding the terms and using $\alpha, \beta, x_2 \geq 0$. 
 
 \medskip
 The remainder of this section provides further examples of regular sets with constant normal, pointing out some methods to construct them. It is not necessary for the sequel of the paper: the reader interested in the pathological examples may skip to Section~\ref{sec_ex}.
 
% The remainder of this section is not necessary for the sequel of the paper. The reader not interested in methods to construct more examples can jump to Section~\ref{sec_ex}, where we discuss the pathological examples.
 
\begin{remark}
 \label{remark:upper-graph}
All constant normal sets in rank $2$ are represented, in exponential coordinates of II type, by upper-graphs of functions that are independent of the $x_1$-variable. More precisely, assuming without loss of generality that the normal is $X_2$ and choosing a representative $E$ that is precisely $X_2$-monotone and open (this exists by Theorem~\ref{main:thm2}), then 
$$E= \{ x_2 > G(x_3, \ldots, x_n) \},$$ 
with $G$ upper-semi-continuous.

To prove this, notice that $X_2|_{\{x_1=0\}}= \partial_2$ and thus $(0, x_2, x_3, \dots, x_n)\exp(tX_2)=(0, x_2+t, x_3, \dots, x_n)$; the precise $X_2$-monotonicity implies that the function $G:\R^{n-2}\to \overline{\R}$ defined by $G(x_3, \dots, x_n) := \inf\{x_2: (0, x_2, x_3, \dots, x_n) \in E\}$ (with the convention $\inf\{\emptyset\}=+\infty$) has the property that $E\cap \{x_1=0\}$ is given by $\{0\} \times \{x\in \R^{n-1}: x_2 >G(x_3, \dots, x_n)\}$. The $X_1$-invariance (notice that $X_1= \partial_1$) gives that $E= \{ x_2 > G(x_3, \ldots, x_n) \}$. Since $E$ is open, $G$ is upper-semi-continuous.
\end{remark}

\begin{example}\label{example da PDI}
We give here a sufficient condition on a function $G:\R^3\to\R \cup \{ +\infty\}$, to ensure that the set $E \subset \mathbb{F}_{23}$ defined, in exponential coordinates of II type, by 

$$E= \{ x_2 \geq G(x_3, \ldots, x_n) \}$$
is a precisely constant-normal set with normal $X_2$. The sufficient condition entails that the set $\{G = +\infty\}$ is of the form $\{x_4 \leq a\}$ for some $a\in \overline{\R}$, $G$ is of class $C^1$ on $\{G \neq +\infty\}$ and $G$ satisfies, on the same set, the partial differential inequality
$$(\partial_3 G - G\; \partial_5 G)^2+2 \;\partial_4 G\leq 0.$$

The example $\mathcal{C}_{0,1}= \{x: x_2 \geq 0, x_5^2 \leq 2 x_2^3 x_4\}$ given above in Example~\ref{ex:Calphabeta} is of this type, choosing $G(x_3, x_4, x_5)= \sqrt[3]{\frac{x_5^2}{2 x_4}}$ for $x_4 >0$ and $G(x_3, x_4, x_5)=+\infty$ for $x_4\leq 0$. We will check this fact in Example~\ref{example B} below.
\end{example}
% Dimostrazione rozza: Mi serve che
% $$G ( p_3 -p_1 t,p_4+p_1^2 t/2, p_5+p_1 t (p_2 +t/2)))   - G(p_3,p_4,p_5) \leq t.$$ 
% faccio il limite in $t$
% $$-p_1\partial_3G+\dfrac{ p_1^2 }{2}\partial_4G+p_1p_2\partial_5G\leq 1.$$
% Posso assumere $p_2=G$, quindi serve che per ogni $p_1$
% $$\dfrac{1}{2} \partial_4G p_1^2 + (G\; \partial_5 G- \partial_3 G )p_1-1\leq 0$$
% Quindi serve che il discriminante sia negativo. Il discriminante e' $(\partial_3 G - G\; \partial_5 G)^2+2 \;\partial_4 G$.
% \qed

As before, we compute (on $\{G\neq +\infty\}$) the derivative of $P(x)= x_2 -G(x_3, x_4, x_5)$ in the direction $X_{2}=\partial_{2} -x_{1}\partial_{3} + \frac{x_{1}^{2}}{2}\partial_{4} + x_{1}x_{2}\partial_{5}$, which gives
$$d(x)=1+x_1 \partial_3 G - \frac{x_1^2}{2} \partial_4G -x_1 x_2 \partial_5 G.$$
Note that $\partial_4 G \leq 0$ by assumption, so the coefficient of $x_1^2$ is non-negative. If it vanishes, i.e.~if $\partial_4 G = 0$ at $(x_3,x_4,x_5)$ then the PDI gives $\partial_3 G = G \partial_5 G$, therefore in this case $d=1$ on $\{(x_1, G(x_3,x_4,x_5), x_3,x_4,x_5): \partial_4 G =0\}$ for any choice of $x_1$. We now consider the case $\partial_4 G < 0$ and view $d(x)$ as a polynomial of degree $2$ in $x_1$, with positive coefficient for the quadratic term. Its discriminant is given by
$$D=(\partial_3 G -x_2 \; \partial_5 G)^2 +2 \;\partial_4 G$$
and the PDI assumption gives $D\leq 0$ when $x_2= G$. This implies that, if $D<0$ at $(x_3,x_4,x_5)$ then $d >0$ everywhere on the line $(x_1, G(x_3,x_4,x_5), x_3,x_4,x_5)$; in the case $D = 0$ at $(x_3,x_4,x_5)$, then $d >0$ everywhere on the line $(x_1, G(x_3,x_4,x_5), x_3,x_4,x_5)$ except for the point corresponding to $x_1=\frac{\partial_3 G -G \; \partial_5 G}{\partial_4 G} = \pm \frac{\sqrt{2}}{\sqrt{-\partial_4 G}}$, at which $d=0$. The condition $d>0$ at $(x_1, G(x_3,x_4,x_5), x_3,x_4,x_5)$ implies that the $X_2$-flow starting at $(x_1,  G(x_3, x_4, x_5), x_3, x_4, x_5)$ enters the upper-graph: by continuity of $G$, the flow stays in the closed upper-graph in a neighbourhood of $(x_1,  G(x_3, x_4, x_5), x_3, x_4, x_5)$ also at the point where $d=0$. 

In conclusion, at all points $(x_1, G(x_3,x_4,x_5), x_3,x_4,x_5)$ where $G<\infty$ we have ensured that the $X_2$-flow starting at that point is contained in $\{x_2\geq G\}$ for an interval of time $[0, \eps)$ with $\eps>0$ (at this stage, $\eps$ may depend on the point). We now consider any $X_2$-flow starting at some point on $(x_1, G(x_3,x_4,x_5), x_3,x_4,x_5)$ (with $G<\infty$) and denote by $t_0>0$ the infimum of times for which the flow is outside $\{x_2\geq G\}$. The aim is to ensure that $t_0=+\infty$. If that were not the case, then the $X_2$-flow at $t_0$ would yield a point that is either on $(x_1, G(x_3,x_4,x_5), x_3,x_4,x_5)$ or in the domain where $G=\infty$. The first alternative is impossible by the previous discussion, since we ensured that the $X_2$-flow from any point $(x_1, G(x_3,x_4,x_5), x_3,x_4,x_5)$ stays in $\{x_2\geq G\}$ for some time. The second alternative is also impossible, by the assumption on $\{G=\infty\}$: the coordinate expression of $X_2$ shows that the $x_4$-coordinate is non-decreasing along the $X_2$-flow and we have started the flow from a point with $x_4 \geq a$. This concludes the proof of the precise $X_2$-monotonicity of $\{x_2 \geq G\}$.

\begin{remark}
 \label{remark:PDI}
In \cite{Bellettini-LeDonne} we showed that $E$ is a precisely constant-normal set in the Engel group \textit{if and only if}, in exponential coordinates of the second kind, $E$ is the upper-graph of a $BV$-function $G$ that satisfies a partial differential inequality, in which the partial derivatives (that are Radon measures) appear in duality with non-negative test functions (in the case in which $G$ is $C^1$, the PDI in \cite{Bellettini-LeDonne} bears similarities to the one in example~\ref{example da PDI}). As pointed out in Remark~\ref{remark:upper-graph}, it remains true in arbitrary Carnot groups that (in exponential coordinates of the second kind) any precisely constant-normal set $E$ can be written as $\{x: x_2> G\}$ for some function $G$. Therefore one could investigate the possibility of a characterization via PDI of constant normal sets in $\mathbb{F}_{23}$, in analogy with \cite[Theorem~3.17]{Bellettini-LeDonne}. We do not pursue this direction, and provide here a brief discussion of the issues involved.

It seems reasonable to expect the following: if $G:\R^3\to \R$ is $BV_{\rm{loc}}$ and satisfies a distributional analogue of the PDI in example~\ref{example da PDI} then (adapting the arguments in \cite[Theorem 3.17]{Bellettini-LeDonne}) the upper-graph of $G$ is a constant-normal set. However, this cannot provide a \textit{characterization} of constant-normal sets in $\mathbb{F}_{23}$. In fact, we will show in the Section~\ref{sec:controesempio1} that, in sharp contrast with the case of the Engel group, in $\mathbb{F}_{23}$ the function $G$ may fail to be in $BV_{\rm{loc}}$. If we wanted to  characterize the property of having constant normal in $\mathbb{F}_{23}$ by a PDI for $G$, then we would have to allow $G\notin BV_{\rm{loc}}$ in a distributional generalization of the PDI in example~\ref{example da PDI}. Understanding if this is possible requires a careful analysis, since the multiplication of $G$ by $\partial_5 G$ would have to be well-defined (if $\partial_5 G$ is an arbitrary distribution then the general theory only allows multiplication by a smooth function).
\end{remark}

\begin{example}\label{example B}
If $F:\R^2\to (0, +\infty]$, with coordinates $(x_4, x_5)$ on $\R^2$, is such that $F = +\infty$ on a set of the form $\{x_4 <a\}$ for $a\in [-\infty, \infty]$ and $F$ is $C^1$ on $\{x_4>a\}$ and satisfies on this set the PDI
$$(\partial_5 F)^2 + 6\; \partial_4 F\leq 0,$$
then
$$E=\{x\in\R^5\;:\;x_2>\sqrt[3]{F(x_4,x_5)}\}$$
 is a precisely constant-normal set with normal $X_2$ (with the convention $\sqrt[3]{+\infty}=+\infty$). This follows from Example~\ref{example da PDI}; in fact, setting $G=\sqrt[3]{F(x_4,x_5)}$, the PDI for $G$ from Example~\ref{example da PDI} is equivalent to $\frac{1}{9 F^{2/3}} \left((\partial_5 F)^2 + 6\; \partial_4 F\right)\leq 0$. 
 
 Choosing $a=0$ and $F(x_4, x_5)=\frac{x_5^2}{2 x_4}$ (note that $F>0$ on $\{x_4>0\}$), we have $(\partial_4 F)^2 + 6 \partial_4 F = (\frac{x_5}{x_4})^2 - 3(\frac{x_5}{x_4})^2 \leq 0$. We thus recover the example $\mathcal C_{0,1}$ given above in Example~\ref{ex:Calphabeta}.
\end{example} 

\begin{example}\label{examples} 
Constant-normal sets of the type in example~\ref{example B} are abundant. In fact, let $g:\R\to\R$ be a non-negative and non-increasing function and, for $C \in (0, \infty)$, let $f:\R\to \R$ be a non-negative and non-decreasing Lipschitz function with Lipschitz constant $\leq \frac{6}{C^2}$. Then $$E:=\left\{x\in\R^5 : x_2 >\sqrt[3]{ f(Cx_5 - x_4) + g(x_4)}\right\}$$ is a constant-normal set with normal $X_2$. 
%{\color{blue}any specific relevance of this example? Translational properties?}

If $f, g$ are $C^1$, then we are in the setting of Example~\ref{example B}. Set $F(x_4,x_5)=f(Cx_5 - x_4) + g(x_4)$. The validity of the PDI from example~\ref{example B} is easy to check: indeed, we get $(\partial_5 F)^2 + 6\; \partial_4 F = C^2 (f')^2  - 6 f'+ 6 g' = f' (C f' - 6) + 6 g' \leq  f' (C^2 f' - 6) \leq 0$, where we used $g'\leq 0$ and $0\leq f' \leq \frac{6}{C^2}$.

As mentioned in Remark~\ref{remark:PDI}, one could write an analogous PDI for $BV$ functions, which would allow $F(x_4,x_5)=f(Cx_5 - x_4) + g(x_4)$. For this specific example, however, it suffices to observe that smoothing $f$ and $g$ by convolution we preserve the sign and monotonicity properties, as well as the Lipschitz constant for $f$. We then obtain (smooth) constant-normal sets $E_n$ that converge to $E$ (in $L^1$) and the cone property passes to $E$. 
%{\color{blue}check explicitly? do we need it? This section is getting a bit long...}

% Alternatively, we can give a proof writing the $X_2$-flux and thus checking explicitly the precise $X_2$-monotonicity. Let $(p_1,p_3,p_4,p_5)$ be arbitrary and consider the flux along $X_2$ that starts at $(p_1, G(p_4, p_5), p_3, p_4, p_5)$, where $G(x_4, x_5)=\sqrt[3]{ f(Cx_5 - x_4) + g(x_4)}$: this flux is given at time $t>0$ by $(p_1, G(p_4, p_5) +t , p_3 -p_1 t,p_4+p_1^2 t/2, p_5+p_1 t (p_2 +t/2))$ and the condition that this point be in $E$ is equivalent to $G(p_4, p_5) +t \geq G(p_4+p_1^2 t/2, p_5+p_1 t (p_2 +t/2)),$ i.e.
% $$\sqrt[3]{ f(Cp_5 - p_4) + g(p_4)} +t \geq \sqrt[3]{ f(C p_5- p_4 + p_1 t (p_2 +t/2) +p_1^2 t/2) + g(p_4+p_1^2 t/2)}.$$
% We have equality at $t=0$ and the left-hand-side grows like $t$. For the right-hand-side, recall that $g$ is non-increasing, so it grows at most like $\sqrt[3]{ f(C p_5- p_4 + p_1 t (p_2 +t/2) +p_1^2 t/2)}$. The conditions that $f$ is non-decreasing and Lipschitz imply that this grows at most like $\sqrt[3]{ f(C p_5- p_4) + \frac{6}{C^2}(p_1 t (p_2 +t/2) +p_1^2 t/2)}$ (when $p_1 t (p_2 +t/2) +p_1^2 t/2>0$, otherwise it does not increase in $t$). This is $\leq \sqrt[3]{f(C p_5- p_4)}\left(1+\frac{1}{3} \frac{\frac{6}{C^2}(p_1 t (p_2 +t/2) +p_1^2 t/2)}{f(C p_5- p_4)}\right)$ so the growth is controlled by $\frac{2(p_1 t (p_2 +t/2) +p_1^2 t/2)}{C^2(f(C p_5- p_4))^{2/3}}$
\end{example}

\section{Examples with pathologies} % of constant normal sets}
\label{sec_ex}
In this section, first we construct a
     constant normal set that does not have locally finite perimeter with respect to any   Riemannian metric, see Theorem~\ref{thm:no_BV}.
     Second,  we  construct an example with non-unique   blowup at some point, showing that it has     different upper and lower sub-Riemannian density at the point, see Theorem~\ref{prop:non_unique_tangents}. 
\subsection{Lack of the Caccioppoli set property in the Euclidean sense}  \label{sec:controesempio1}
This section is devoted to the construction of a set with constant normal that does not have locally finite Euclidean perimeter. The result is in Theorem~\ref{thm:no_BV}. Before proving it, we construct some auxiliary  sets.

We consider a Cantor set $K \subset [0,1]$ with $\mathcal{H}^1(K)>0$, constructed as follows. Let $a_0, a_1,\ldots$ be a sequence of positive real numbers such that
$$\sum_{j=0}^\infty a_j <1$$
and
$$\lim_{n\to \infty} 2^n a_n^2 = +\infty.$$
(For example, one can take a tail of the sequence $1/n^2$.)
We construct $K$ by induction.
We start with the set $[0,1]$ and remove the open segment of length $a_0$ and center in $1/2$.
We are left with two closed segments, say $I_{0,1}$ and $I_{0,2}$.
At step $n$ we have $2^n$ closed segments $I_{n-1,1},\ldots,I_{n-1,2^n}$.
From each $I_{n-1,k}$ we remove the open segment $J_{n,k}$
centered at the center of $I_{n-1,k}$ and
 of length $$| J_{n,k}|= a_n/2^n.$$
 Hence, we are left with $2^{n+1}$ segments    $I_{n,1},\ldots,I_{n,2^{n+1}}$ of length
 $$| I_{n,k}|= \dfrac{1- \sum_{j=0}^n a_j}{2^{n+1}}.$$
Let $K$ be the set of points $a\in [0,1]$ such that, for all $n\in N$,  there exist  $k$ for which $a\in I_{n,k}$. It is straightforward that $K$ is a Cantor set with positive measure.

\medskip

For $\mu>0$, we consider the set
$$W_{\mu}:=\{(x,y)\in \R^2 : (x-t)^2\leq \mu y^3, \text{ for some } t\in K\}.$$

\begin{lemma}
\label{Cantor}
Let $\mu>0$, and let $K$  and $W_{\mu}$ be as above. There exists a (H\"older) function $f_{\mu}:\R \to [0,\infty)$ such that 
\begin{itemize}
 \item[$\bullet$] $W_{\mu} = \{(x,y)\in \R^2 :y\geq f_{\mu}(x)\}$;
 \item[$\bullet$] $f_{\mu}$ is not $BV_{\rm{loc}}(\R)$.
\end{itemize}
\end{lemma}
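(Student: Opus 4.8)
The plan is to identify $f_\mu$ explicitly as a power of the distance function to $K$. For a fixed $t\in K$, the condition $(x-t)^2\le\mu y^3$ forces $y\ge 0$ and is then equivalent to $y\ge\mu^{-1/3}|x-t|^{2/3}$; thus the slice $\{(x,y):(x-t)^2\le\mu y^3\}$ is exactly the upper-graph of the cusp $x\mapsto\mu^{-1/3}|x-t|^{2/3}$. Taking the union over $t\in K$ turns these upper-graphs into the upper-graph of the pointwise infimum, so I would set
\[
 f_\mu(x):=\inf_{t\in K}\mu^{-1/3}|x-t|^{2/3}=\mu^{-1/3}\operatorname{dist}(x,K)^{2/3},
\]
where the last equality uses monotonicity of $s\mapsto s^{2/3}$ and that the infimum defining $\operatorname{dist}(x,K)$ is attained, since $K$ is compact. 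Checking $W_\mu=\{y\ge f_\mu(x)\}$ is then immediate: for $y<0$ neither side contains $(x,y)$, and for $y\ge 0$ the defining inequality holds for some $t$ if and only if $\operatorname{dist}(x,K)^2\le\mu y^3$.

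The Hölder bound follows by composition: $\operatorname{dist}(\cdot,K)$ is $1$-Lipschitz and $s\mapsto s^{2/3}$ is $\tfrac23$-Hölder on $[0,\infty)$, with $|a^{2/3}-b^{2/3}|\le|a-b|^{2/3}$ for $a,b\ge 0$. Hence $|f_\mu(x)-f_\mu(x')|\le\mu^{-1/3}|x-x'|^{2/3}$, so $f_\mu$ is globally $\tfrac23$-Hölder, in particular continuous, with $f_\mu\equiv 0$ on $K$.

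The substantive point, and the main obstacle, is the failure $f_\mu\notin BV_{\rm loc}$, which is where the hypotheses on $(a_j)$ enter. The condition $\sum a_j<1$ guarantees $\mathcal{H}^1(K)>0$ (the total removed length is $\sum_{n}2^n\cdot(a_n/2^n)=\sum_n a_n<1$), while the divergence condition $\lim_n 2^n a_n^2=+\infty$ is what forces infinite variation. I would compute the total variation of $f_\mu$ on $[0,1]$ from the gap structure: on each connected component $(a,b)$ of $[0,1]\setminus K$ the endpoints lie in $K$, so $\operatorname{dist}(\cdot,K)$ is the tent $\min(x-a,b-x)$ and $f_\mu$ rises from $0$ at $a$ to $\mu^{-1/3}\big(\tfrac{b-a}{2}\big)^{2/3}$ at the midpoint and back to $0$ at $b$; being piecewise monotone there, its variation over that gap equals $2\mu^{-1/3}\big(\tfrac{b-a}{2}\big)^{2/3}$. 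Inserting the gap endpoints and midpoints of the first $N$ generations as partition points gives the lower bound
\[
 \operatorname{Var}_{[0,1]}(f_\mu)\;\ge\;\sum_{n=0}^{N}2^{n}\cdot 2\mu^{-1/3}\Big(\frac{a_n/2^{n}}{2}\Big)^{2/3}\;=\;2^{1/3}\mu^{-1/3}\sum_{n=0}^{N}\big(2^{n}a_n^2\big)^{1/3},
\]
using that generation $n$ contributes $2^n$ gaps of length $a_n/2^n$.

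Since $(2^n a_n^2)^{1/3}\to+\infty$ by hypothesis, the summands do not even tend to $0$, so letting $N\to\infty$ yields $\operatorname{Var}_{[0,1]}(f_\mu)=+\infty$. As $[0,1]$ is compact, this already shows $f_\mu\notin BV_{\rm loc}(\R)$. Note that I only need the lower bound for the variation, so no matching upper bound or careful localization is required; the only points needing care are that the gap endpoints indeed belong to $K$ (so the tent description, hence the per-gap variation, is exact) and the bookkeeping of the number and lengths of the gaps at each generation.
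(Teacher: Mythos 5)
Your proof is correct, and while the function you produce is the same one the paper uses (your $\mu^{-1/3}\operatorname{dist}(\cdot,K)^{2/3}$ coincides with the paper's $\inf_{t\in K}\mu^{-1/3}(x-t)^{2/3}$, and the graph and H\"older arguments are essentially identical), your proof of the failure of $BV_{\rm loc}$ goes by a genuinely different route. The paper shows that $f_\mu$ is non-differentiable at every point of $K$ --- by exhibiting, for each $a\in K$, boundary points $(q_n,q_n')$ over the gap midpoints whose slopes to $(a,0)$ blow up --- and then invokes the fact that a $BV$ function of one variable is a difference of monotone functions, hence differentiable a.e.; this is where the hypothesis $\mathcal H^1(K)>0$, i.e.\ $\sum_j a_j<1$, is essential. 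You instead compute a direct lower bound on the total variation by summing the tent heights $2\mu^{-1/3}\bigl(\tfrac{a_n/2^n}{2}\bigr)^{2/3}$ over the $2^n$ gaps of generation $n$, obtaining per-generation contributions $2^{1/3}\mu^{-1/3}(2^na_n^2)^{1/3}$ that diverge by the hypothesis $2^na_n^2\to\infty$. Your computation is more elementary (no appeal to a.e.\ differentiability of $BV$ functions) and shows that the positivity of $\mathcal H^1(K)$ is not actually needed for the $BV$ failure, only the divergence condition on $(a_j)$; the paper's argument, on the other hand, yields the stronger pointwise information that $f_\mu$ is non-differentiable on a set of positive measure. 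Both the points you flag as needing care --- that the gap endpoints lie in $K$ (they are endpoints of the surviving closed intervals at every stage, hence in the intersection), and the gap bookkeeping --- check out, as does the arithmetic $2^n\cdot 2\mu^{-1/3}(a_n/2^{n+1})^{2/3}=2^{1/3}\mu^{-1/3}(2^na_n^2)^{1/3}$.
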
 

\begin{proof}
 
Define $$f_{\mu} = \inf_{t\in K} g_t(x), \text{ where } g_t(x)=\frac{1}{\mu^{1/3}}(x-t)^{2/3}.$$
We will drop the subscript $\mu$ within the proof, so we will write $W=W_\mu$ and$f=f_\mu$. Clearly $f\geq 0$. For $x\in \R$, if $y<f(x)$ then $y < g_t(x)$ for all $t\in K$, i.e., $(x-t)^2> \mu y^3$ for all $t\in K$, in other words $(x,y) \notin W$. If $y>f(x)$ then $y > g_t(x)$ for some
$t\in K$, i.e., $(x-t)^2< \mu y^3$ for some $t\in K$, in other words $(x,y) \in W$. The function $f$ is H\"older continuous since for every $t\in K$ we have 
$-\frac{1}{\mu^{2/3}}|x-y|^{2/3} \leq g_t(x) - g_t(y) \leq\frac{1}{\mu^{2/3}}|x-y|^{2/3}$. By continuity of $f$ we have $W= \{(x,y)\in \R^2 :y\geq f(x)\}$ and $\partial W= \text{graph}(f)$.

We will next show that $f$ fails to be differentiable at every point in $K$ (with $K$ contained in the domain of $f$). This will guarantee the second conclusion of the lemma in view of 
the fact that a $BV$ function of $1$ variable is the sum of two monotone functions and is therefore (classically) differentiable almost everywhere (we chose $K$ of positive measure).

Take $a\in K$. Let $I_n:=  I_{n,k(n)}$ such that $a\in I_n$, for all $n\in \N$.
Let $J_{n+1}$ be the interval removed from $I_n$ in the construction of $K$.
Let $q_n$ be the midpoint of $J_{n+1}$.
Let $p_n$ be the point in $\partial I_{n+1}$ that is between $a$ and $q_n$. 
%, see Figure ?.

Notice that $W \subset\{(x,y)\in\R^2:y\geq 0\} $
and $(a,0)\in \partial W$. So, if $f$ were differentiable at $a$ (recall that $\text{graph}(f)=\partial W$) then the derivative would be $0$ because $f$ vanishes on $K$ and no point in $K$ is isolated.
However, we shall show that, for a suitable $q_n'>0$, 
$(q_n,q_n')\in \partial W$
and the slope between $(a,0)$ and $(q_n,q_n')$ tends to $\infty$.
Indeed, , for  $q_n':= \mu^{-1/3} (| J_{n+1}| /2)^{2/3}$, 
we have the following properties.
The point $(q_n,q_n')$ belongs to $ W$, since, for $t=p_n$, 
$$(q_n-t)^2=(q_n-p_n)^2= \left( \dfrac{|J_{n+1}|}{2}\right)^2=
 \mu \left(  \mu^{-1/3} \left(\dfrac{|J_{n+1}|}{2}\right)^{2/3}\right)^3=\mu\, (q_n')^3 .$$
 For $t\in K\setminus\{p_n\}$,
 we have
 $|q_n-t|\geq |q_n-p_n|$ and so $(q_n-t)^2\geq\mu\, (q_n')^3 .$
Hence, $(q_n,q_n')\in \partial W$.

Regarding the slope, we can bound, for a suitable constant $k$ (depending only on $\mu$),
\begin{eqnarray*}
\dfrac{q'_n}{|a-q_n|}&\geq&\dfrac{  \mu^{-1/3} (| J_{n+1}| /2)^{2/3}}{   |I_n|}\\
&\geq& {  \mu^{-1/3} (a_{n+1}/2^{n+2})^{2/3}}{   \,\dfrac{2^{n+1}}{1- \sum_{j=0}^n a_j}}\\
&\geq&k\;2^{n/3}(a_{n+1})^{2/3}   , %\qquad\qquad\qquad\qquad\quad\longrightarrow \infty, \text{ as } n \to \infty. 
\end{eqnarray*}
which goes to $\infty$, for how the sequence $a_n$ was chosen. 
\end{proof}

% {\color{red} We claim that the perimeter of $W$ can be bounded below by $\sum 2^{n/3} a_n^{2/3}$.
% 
% ???
% }
% \qed
% 
% 
% One can easily see that the Cantor set associated to (a tail of) the sequence $1/n^2$ works even when the exponents $2/3$ is replaced with some other $\alpha\in (0,1)$.
% Hence, one can prove the following results on points of differentiability of H\"older maps.
% \begin{corollary}
% There exists a Cantor set $C\subset \R$ of positive measure such that,
% for all $\alpha\in (0,1)$, there exists a $\alpha$-H\"older map $f:\R\to\R$ with the property that 
% $f$ is not differentiable at and only at the points of $C$.
% \end{corollary}
  
We next exhibit a set in $\mathbb{F}_{23}$ that is $X_2$-monotone however it is not a set of locally finite perimeter in the Euclidean sense. We work in exponential coordinates of the second kind.  
  
\begin{theorem}\label{thm:no_BV}
Consider the set 
$$ \mathcal C:=\left\{\;x\in\R^5\;:\; x_2\geq 0, \;x_4\geq 0, \;(x_5)^2\leq 2\;(x_2)^{3} \;x_4\;\right \}.$$
Let $K$ be the Cantor set constructed before Lemma~\ref{Cantor} and define the set $$E:= \bigcup_{t\in K}L_{\exp(t X_5)}(\mathcal C).$$ Then $E$ is a set with constant normal, however $E$ is not a set with locally finite Euclidean perimeter.
\end{theorem}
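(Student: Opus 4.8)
The plan is to establish the two assertions of the theorem separately: that $E$ has constant normal $X_2$, and that $E$ fails to have locally finite Euclidean perimeter. For the first, I would begin by checking that $\mathcal C$ itself (the actual closed set, not just a representative) is precisely $X_2$-monotone. Off the $\vol$-null set $\{x_2=x_5=0\}$ the set $\mathcal C$ coincides with $\mathcal C_{0,1}$ of Example~\ref{ex:Calphabeta} (equivalently the closed version of the upper-graph of Example~\ref{example B} with $F(x_4,x_5)=x_5^2/(2x_4)$), and the $X_2$-flow preserves $\{x_5^2\le 2x_2^3x_4,\ x_2\ge0\}$ by that computation; it also preserves $\{x_4\ge0\}$ because the $x_4$-component of $X_2$ equals $x_1^2/2\ge 0$, so $x_4$ is non-decreasing along the flow. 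Since $\mathcal C$ is moreover $X_1$-invariant (its defining inequalities do not involve $x_1$), it is precisely $X$-monotone for every $X$ in the closed horizontal half-space $W$ with inner normal $X_2$, by approximating the flow of $aX_1+bX_2$ by zig-zags alternating $\pm X_1$ and $X_2$ and using $\Cl(S_{W_3})=\Cl(S_{W_1})$; equivalently, by Proposition~\ref{monotone_semigroup}, $\mathcal C$ has the $S_W$-cone property. This property passes to every left translate, since $(g\,\mathcal C)\,S_W=g\,(\mathcal C\,S_W)\subseteq g\,\mathcal C$, so each $L_{\exp(tX_5)}(\mathcal C)$ has it; by consequence (b) of Proposition~\ref{monotone_semigroup} the union $E=\bigcup_{t\in K}L_{\exp(tX_5)}(\mathcal C)$ has the $S_W$-cone property, hence is precisely constant-normal, and in particular has constant normal $X_2$.

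For the second assertion, the key computation is that left translation by $\exp(tX_5)=(0,0,0,0,t)$ acts in the coordinates of \eqref{gp_law} as the pure Euclidean shift $x_5\mapsto x_5+t$; thus $L_{\exp(tX_5)}(\mathcal C)=\{x_2\ge0,\ x_4\ge0,\ (x_5-t)^2\le 2x_2^3x_4\}$, and $E=\{x\in\R^5:(x_2,x_4,x_5)\in\hat E\}$, where $\hat E\subset\R^3_{(x_2,x_4,x_5)}$ consists of the points with $x_2\ge0$, $x_4\ge0$ and $(x_5-t)^2\le 2x_2^3x_4$ for some $t\in K$. Since $E=\R^2_{(x_1,x_3)}\times\hat E$ is a cylinder, $E$ has locally finite Euclidean perimeter if and only if $\hat E$ does. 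The decisive point is that for each fixed $c>0$ the section $\hat E\cap\{x_4=c\}$, read in the $(x_5,x_2)$-plane, is exactly the set $W_{2c}$ of Lemma~\ref{Cantor}, i.e.\ the upper-graph $\{x_2\ge f_{2c}(x_5)\}$ of the non-$BV_{\mathrm{loc}}$ function $f_{2c}$.

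I would then argue by contradiction. Suppose $\hat E$ has locally finite perimeter and fix a bounded box $I_2\times I_4\times I_5$ with $\overline{I_4}\subset(0,\infty)$ and $I_5\supset[0,1]$, on which $|D\uno_{\hat E}|$ is finite. Slicing $\hat E$ by the hyperplanes $\{x_4=c\}$ — that is, applying the coarea formula to the coordinate $x_4$ restricted to the rectifiable reduced boundary $\partial^*\hat E$, whose tangential gradient factor is $\sqrt{1-\nu_{x_4}^2}\le1$ (equivalently, the $BV$/Vol'pert slicing theorem) — gives $\int_{I_4}\mathrm{Per}(W_{2c};I_5\times I_2)\,dc\le |D\uno_{\hat E}|(I_2\times I_4\times I_5)<\infty$, so $W_{2c}$ has finite perimeter in $I_5\times I_2$ for a.e.\ $c\in I_4$. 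But by Lemma~\ref{Cantor} the graphing function $f_{2c}$ has infinite variation on $[0,1]$ (it fails to be differentiable on the positive-measure set $K$, whereas a one-variable $BV$ function is differentiable almost everywhere), and a bounded upper-graph has locally finite perimeter if and only if its graphing function is $BV_{\mathrm{loc}}$; choosing $I_2$ to contain the (uniformly bounded, for $c\in\overline{I_4}$) ranges of the $f_{2c}$ over $I_5$, we obtain $\mathrm{Per}(W_{2c};I_5\times I_2)=+\infty$ for every $c>0$, contradicting finiteness for a.e.\ $c$. Hence $\hat E$, and therefore $E$, is not of locally finite Euclidean perimeter; the same holds for any Riemannian metric, whose perimeter is locally comparable to the Euclidean one.

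The step I expect to be the main obstacle is the slicing reduction in the last paragraph: one must justify that locally finite perimeter of $\hat E$ forces locally finite perimeter of almost every codimension-one section $\{x_4=c\}$, and identify that section's reduced boundary with $\partial^* W_{2c}$ for a.e.\ $c$. This is precisely where the one-variable pathology packaged in Lemma~\ref{Cantor} — the non-$BV$ profile $f_\mu$ built from a positive-measure Cantor set — gets promoted to a genuine failure of finite perimeter for the five-dimensional set; the cleanest justification is the coarea formula on $\partial^*\hat E$. The remaining ingredients (the coordinate action of $L_{\exp(tX_5)}$, the cylinder reduction, and the equivalence between $BV_{\mathrm{loc}}$ of a function and locally finite perimeter of its subgraph) are routine.
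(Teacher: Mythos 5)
Your proof is correct, and the first assertion (constant normal) is handled exactly as in the paper: identify $\mathcal C$ with $\mathcal C_{0,1}$ of Example~\ref{ex:Calphabeta} up to a null set, note the $X_1$-invariance and the sign of the $x_4$-component of $X_2$, and pass to the union via Proposition~\ref{monotone_semigroup}(b); your extra care in upgrading $\{\pm X_1, X_2\}$-monotonicity to precise monotonicity for the whole closed half-space (using that $\mathcal C$ is closed and $\Cl(S_{W_3})=\Cl(S_{W_1})$) is a legitimate filling-in of a step the paper leaves implicit. For the second assertion you reach the same contradiction via Lemma~\ref{Cantor}, but you run the dimension reduction in the opposite order: you first slice the three-dimensional cylinder base $\hat E$ by the hyperplanes $\{x_4=c\}$ (Vol'pert/coarea slicing of a finite-perimeter set), obtaining that a.e.\ slice $W_{2c}$ has finite perimeter, and only then invoke the two-dimensional equivalence between finite perimeter of an upper-graph and $BV_{\rm loc}$ of its graphing function. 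The paper instead first applies the subgraph theorem in $\R^4$ (via \cite{Giaquinta-Modica-Soucek}) to get that the graphing function $G$ of $E$ is $BV_{\rm loc}(\R^4)$, and then slices $G$ as a $BV$ function of several variables to reach the one-variable profiles $f_{2p_4}$. The two routes use the same pair of standard tools (subgraph--$BV$ equivalence and $BV$ slicing) in reverse order; yours only needs the subgraph theorem in the plane but requires the set-slicing theorem, while the paper's needs the higher-dimensional subgraph theorem but only the elementary one-dimensional slicing of a $BV$ function. Either way the contradiction with the non-$BV_{\rm loc}$ profile $f_{2c}$ on the positive-measure Cantor set is identical, and your observation that the ranges of $f_{2c}$ are uniformly bounded for $c$ in a compact subset of $(0,\infty)$ correctly handles the localization of the perimeter to a bounded box.
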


\proof
The set $\mathcal C$ is the set $\mathcal C_{0,1}$ from Example~\ref{ex:Calphabeta}, which  we proved has precisely constant normal with normal $X_2$. 
Thanks to item (b) of Proposition~\ref{Remark_unioni}, we have that $E$ has precisely  constant normal with normal $X_2$.  It follows that $E$ is the upper-graph of a function $G:\R^4 \to \R$: to see this, note that $E \cap \{x_1=0\}$ is the upper-graph of a function because, for $x_1=0$, the vector field $X_2$ is just $\partial_2$, therefore the $X_2$ precise monotonicity gives the conclusion for $E \cap \{x_1=0\}$; the $X_1$-invariance gives the conclusion for $E$ (note that $G$ is independent of $x_1$).

% Let
% $$\Sigma:=\{ x\in \R^5 : x_2=0, x_4>0, x_5\in C\}.$$ Note that $\mathcal L^4(\Sigma)\neq 0$ and $\Sigma \subset \partial E$.

To prove the theorem, note that, since $X_5$ is in the kernel of the Lie algebra, the left translation $L_{\exp(t X_5)}$ is an Euclidean translation. Namely, for all $t\in \R$ and $h\in \R^5$,
\begin{eqnarray*}
L_{\exp(t X_5)}(h)&=&\exp(tX_5) h\\
&=&\exp(tX_5)+h\\
&=&te_5+h.
\end{eqnarray*}
Let
% $p\in \Sigma$, that is, $p=(p_1, 0,p_3,p_4,p_5)$ with 
$p_1, p_3\in \R$, $p_4>0$.
%, and $p_5\in C$.
Define the $2$-dimensional plane
$$\Pi:=\{x\in \R^5: x_1=p_1, x_3=p_3, x_4=p_4\}.$$
Consider the set
\begin{eqnarray*}
E\cap\Pi &=& \bigcup_{t\in K}(\mathcal C+te_5)\cap \Pi\\
&=& \bigcup_{t\in K} 
\{x\in \R^5: x_1=p_1, x_2\leq0, x_3=p_3, x_4=p_4, (x_5-t)^2\leq 2\;(x_2)^{3} \;x_4  \}.
\end{eqnarray*}
Such a set is isometric to the set (using the notation introduced before Lemma~\ref{Cantor})
$$W_{2p_4}:=\{(a,b)\in \R^2 : a\geq 0, (b-t)^2\leq 2p_4 a^3, \text{ for some } t\in K\}.$$
If $E$ were a Caccioppoli set in the Euclidean sense, then the function $G:\R^4 \to \R$ such that $E = \{x_2 \geq G(x_1, x_3, x_4, x_5)\}$ would be $BV_{\rm{loc}}(\R^4)$, see \cite[Theorem 1, p.371]{Giaquinta-Modica-Soucek}. Then $\mathcal{L}^3$-almost every $1$-dimensional slice in the direction of $x_5$ would be a $BV_{\rm{loc}}$-function of $1$-variable by \cite[Theorems 5.21, 5.22]{Evans_Gariepy_revised} . 

By the characterization of $E\cap\Pi$ above, however, we see that the $1$-dimensional slice at $(p_1, p_3, p_4)$ is the function $f_{2p_4}$, with notation as in Lemma~\ref{Cantor}, and therefore (by the same lemma) it is not $BV_{\rm{loc}}(\R)$. This is the case for any choice of $(p_1, p_3, p_4)$ with $p_1, p_3\in \R$, $p_4>0$, therefore $G$ is not $BV_{\rm{loc}}(\R^4)$ and $E$ is not a Caccioppoli set.
% 
% Then, by Lemma~\ref{Cantor}, the set $\partial W$ is not smooth at the points $(a,b)=(0,p_5)$, since $p_5\in C$.
% Hence $\partial E$ is not smooth at $p$ in the direction of $\partial_5$.
\qed

\subsection{Non-uniqueness of subRiemannian tangents}
 \label{sec:controesempio2}
In this section we consider a variant of the example $E$ given in Theorem~\ref{thm:no_BV} and analyse the blow-up limits obtained by taking subRiemannian dilations of $E$ at a certain point $x$. We obtain distinct limits (in the sense of Caccioppoli sets) when we consider distinct sequences of dilating factors: in other words, $E$ does not admit a unique subRiemannian tangent at $x$. We also observe that the subRiemannian lower and upper densities of $E$ at $x$ are distinct ($\Theta_*(E,x) \neq \Theta^*(E,x)$, with notation as in Proposition~\ref{prop:densities}).

\begin{theorem}
 \label{prop:non_unique_tangents}
There exists a set $E$ with constant horizontal normal that admits distinct subRiemannian blow ups at some point $x$. Moreover $\Theta_*(x,E) < \Theta^*(x,E)$.
\end{theorem}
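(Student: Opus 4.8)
The plan is to reuse the building block $\mathcal C := \mathcal C_{0,1}=\{x\in\R^5 : x_2\ge0,\ x_4\ge0,\ x_5^2\le 2x_2^3x_4\}$ from Example~\ref{ex:Calphabeta}, which is precisely $X_2$-monotone, together with the fact (already exploited in Theorem~\ref{thm:no_BV}) that left translation by $\exp(tX_5)$ is the Euclidean translation $y\mapsto y+te_5$, since $X_5$ is central. I would take a countable set $K=\{0\}\cup\{t_k : k\ge1\}$ with $t_k\downarrow 0$ decreasing so fast that $t_{k+1}/t_k\to0$ (for instance $t_k=2^{-2^k}$), and set $E:=\bigcup_{t\in K}\exp(tX_5)\,\mathcal C$. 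Each $\exp(tX_5)\mathcal C$ is again precisely $X_2$-monotone (left translation commutes with right translation by $\exp(sX_2)$), so item (b) of Proposition~\ref{Remark_unioni} shows that $E$ has precisely constant normal $X_2$. Since $\mathcal C\subseteq E$ and $0$ is the vertex of the cone $\mathcal C$, the origin $x:=0$ lies on $\partial E$; the whole point will be that the density of $E$ at $0$ oscillates.

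The key computation is the effect of a subRiemannian blow-up at $0$. In these second-kind coordinates the dilation acts diagonally, $\delta_{1/r}(x_1,\dots,x_5)=(r^{-1}x_1,r^{-1}x_2,r^{-2}x_3,r^{-3}x_4,r^{-3}x_5)$, so it commutes with the $x_5$-translation up to rescaling the shift: as $\mathcal C$ is a cone, $\delta_{1/r}(\exp(tX_5)\mathcal C)=\mathcal C+r^{-3}t\,e_5$, and therefore
\[
\delta_{1/r}(E)=\bigcup_{s\in r^{-3}K}\bigl(\mathcal C+s\,e_5\bigr).
\]
Thus the blow-up of $E$ at $0$ at scale $r$ only sees the rescaled point set $r^{-3}K$, and the geometry of the limit is governed by which $t_k$ satisfy $r^{-3}t_k=O(1)$.

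I would then exhibit two sequences of scales producing different Caccioppoli-set limits. Along $r_n$ with $r_n^3:=\sqrt{t_n t_{n+1}}$ (the geometric mean of consecutive points), every $t_k$ with $k\le n$ gives $r_n^{-3}t_k\to\infty$ (its translate escapes every bounded set), while every $t_k$ with $k\ge n+1$ gives $r_n^{-3}t_k\le \sqrt{t_{n+1}/t_n}\to0$ (its translate collapses onto $\mathcal C$); squeezing $\delta_{1/r_n}(E)$ between $\mathcal C$ and an $\varepsilon_n$-neighbourhood of $\mathcal C$ with $\varepsilon_n\to0$ gives $\delta_{1/r_n}(E)\to\mathcal C$ in $L^1_{\mathrm{loc}}$. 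Along $\rho_n$ with $\rho_n^3:=t_n/\beta$ for a fixed small $\beta>0$, the point $t_n$ is rescaled exactly to $\beta$, the earlier points escape and the later ones collapse onto $\mathcal C$, so $\delta_{1/\rho_n}(E)\to\mathcal C\cup(\mathcal C+\beta e_5)$. Converting the density ratio via $\delta_{1/r}$ (which sends $B_\rho(0,r)$ to $B_\rho(0,1)$ and scales volume by $r^{-Q}$, with $Q$ the homogeneous dimension) turns these two limits into two subsequential density values, and hence
\[
\Theta_*(0,E)\le \frac{\vol(B_\rho(0,1)\cap\mathcal C)}{\vol(B_\rho(0,1))}=l_{\mathcal C},\qquad
\Theta^*(0,E)\ge \frac{\vol\!\bigl(B_\rho(0,1)\cap(\mathcal C\cup(\mathcal C+\beta e_5))\bigr)}{\vol(B_\rho(0,1))}.
\]

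The main obstacle will be the last strict inequality, i.e.\ checking that the second cone genuinely adds volume inside the unit ball, $\vol\!\bigl(B_\rho(0,1)\cap(\mathcal C+\beta e_5)\setminus\mathcal C\bigr)>0$ for a suitable fixed $\beta$; I expect to verify this by exhibiting an explicit open region (points with $x_5$ near $\beta$ and $x_2,x_4>0$ small) lying in the translate but not in $\mathcal C$ and inside $B_\rho(0,1)$. The remaining technical points---the $L^1_{\mathrm{loc}}$ convergence of the rescaled families (handled by the squeezing above plus dominated convergence, using that translates with large shift leave any fixed ball and those with vanishing shift converge to $\mathcal C$) and the passage from set convergence to convergence of the density ratio (using $\vol(\partial B_\rho(0,1))=0$)---are routine once the bookkeeping of which $t_k$ survive at each scale is set up. Combining the two displays with $\beta$ chosen so that the strict inequality holds yields $\Theta_*(0,E)\le l_{\mathcal C}<\Theta^*(0,E)$, and the two distinct limits $\mathcal C$ and $\mathcal C\cup(\mathcal C+\beta e_5)$ give the non-uniqueness of the blow-up.
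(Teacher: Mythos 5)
Your proposal is correct and follows essentially the same route as the paper: both constructions take $E$ to be a union of $\exp(tX_5)$-translates of the cone $\mathcal C=\mathcal C_{0,1}$ over a parameter set that looks different at different scales, and both read off two distinct $L^1_{\mathrm{loc}}$ blow-up limits at the origin from the rescaled parameter set $r^{-3}K$, which then pins $\Theta_*$ below $\Theta^*$. The only difference is cosmetic: the paper chooses the parameter set to be a union of intervals accumulating at $0$, so the two limits are $\mathcal C$ and $\{x_2\ge 0,\ x_4\ge 0\}$ and the strict volume gap is immediate, whereas your discrete sequence produces $\mathcal C$ and $\mathcal C\cup(\mathcal C+\beta e_5)$ and needs the extra (easy, and correctly identified) check that the translated cone contributes positive measure inside the unit ball.
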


\begin{proof}
As mentioned above, we give an example in $\mathbb{F}_{23}$. Keeping the definition of $\mathcal C$ in Theorem~\ref{thm:no_BV} unchanged, we consider $$E:= \bigcup_{t\in Z}L_{\exp(t X_5)}(\mathcal C),$$ where $Z \subset \R$ is a set containing $0$. Recall that the fact that $E$ is a set with constant horizontal normal is true independently of the choice of $Z$, thanks to Corollary~\ref{monotone_subgroup_Carnot} and by the choice of $\mathcal C$. 

We denote as usual by $\delta_r$ the subRiemannian dilation of factor $r$ about $1_G$. Recalling that dilations are homomorphisms of $\mathbb{G}$ and that $\mathcal C$ is a cone with tip at $0$, we have $\delta_r(p \mathcal C) = \delta_r(p) \mathcal C$ and thus 
$$\delta_r(E)= \bigcup_{t\in Z_r}L_{\exp(t X_5)}(\mathcal C),$$
where $Z_r$ is the dilation of $Z \subset \R$ about $0$ by a factor $r^3$. 

We will choose $Z$ such that, at $0$, the lower density (with respect to $\mathcal{H}^1$) is strictly smaller than the upper density. In particular, we may choose $Z$ as follows. Consider an increasing sequence of natural numbers $n_1>1, n_2, n_3, ...$ defined recursively by $n_{j+1} = n_j^3$. Define 

$$Z= \{0\} \cup \left(\bigcup_{j\text{ odd}} \left(\frac{1}{n_j^3}, \frac{1}{n_j}\right]\right) \cup  \left(\bigcup_{j\text{ odd}} \left[-\frac{1}{n_j},-\frac{1}{n_j^3} \right)\right).$$
Choose $R_\ell = \sqrt[3]{\frac{1}{n_{2\ell +1}^2}}$ and $r_\ell = \sqrt[3]{\frac{1}{n_{2\ell}^2}}$ (recall that $Z_r$ is the dilation of $Z \subset \R$ about $0$ by a factor $r^3$ in $\R$). Then $Z_{R_{\ell}} \supset \left[-n_{2\ell+1}, -\frac{1}{n_{2\ell+1}}\right) \cup \{0\} \cup \left(\frac{1}{n_{2\ell+1}} , n_{2\ell+1}\right]$, which converges (increasingly) to the whole line; on the other hand, $Z_{r_{\ell}}$ contains $\{0\}$ and is disjoint from $\left[-n_{2\ell}, -\frac{1}{n_{2\ell}}\right)  \cup \left(\frac{1}{n_{2\ell}} , n_{2\ell}\right]$, so the blow-up of $Z$ along $r_\ell$ is $\{0\}$. 

We thus obtain that the blow up of $E$ at $0$ obtained using the sequence of dilations $\delta_{R_{\ell}}$ is (recall that $X_5=\partial_5$ in exponential coordinates of the second kind) 
$$E_2 = \bigcup_{t\in \R} (\mathcal C+t e_5) =\bigcup_{t\in \R}\left\{\;x\in\R^5\;:\; x_2\geq 0, \;x_4\geq 0, \;(x_5-t)^2\leq 2\;(x_2)^{3} \;x_4\;\right \}=$$ $$=\left\{\;x\in\R^5\;:\; x_2\geq 0, \;x_4\geq 0\right\}.$$
On the other hand, using the sequence of dilations $\delta_{r_{\ell}}$ the blow up obtained is (note that the intersection of $p+\mathcal C^{-1}$ with the line $(0,0,0,0,t)$ is a compact set for any choice of $p$)

$$E_1= \mathcal C=\left\{\;x\in\R^5\;:\; x_2\geq 0, \;x_4\geq 0, \; x_5 ^2\leq 2\;(x_2)^{3} \;x_4\;\right \}.$$
The two blow-ups $E_1$ and $E_2$ are distinct. Moreover,
% Therefore, by suitably choosing $r_n \to 0$ and $\rho_n \to 0$, we may ensure that $Z_{r_n}$ and $Z_{\rho_n}$ have as limits respectively a set $Z_1$ with density $\theta_*$ at $0$ and a set $Z_2$ with density $\theta^*$ at $0$. The subRiemannian blow ups of $E$ at $x$ for these two sequences (TO BE CHECKED) of dilations are then respectively $E_1:= \bigcup_{t\in Z_1}L_{\exp(t X_5)}(K)$ and $E_2:= \bigcup_{t\in Z_2}L_{\exp(t X_5)}(K)$.
% 
% By suitably choosing $Z$ we can for example ensure that $Z_1$ is $\{0\}$ and $Z_2$ is the whole line $\R$ (so the lower density of $Z$ at $0$ with respect to $\mathcal{H}^1$ is $0$, and the upper density is $1$). Then the two tangents obtained are respectively the sets (for the second, recall that $X_5=\partial_5$ in exponential coordinates of the second kind) $$E_1= K=\left\{\;x\in\R^5\;:\; x_2\geq 0, \;x_4\geq 0, \; x_5 ^2\leq 2\;(x_2)^{3} \;x_4\;\right \},$$ $$E_2 = \bigcup_{t\in \R} (K+t e_5) =\bigcup_{t\in \R}\left\{\;x\in\R^5\;:\; x_2\geq 0, \;x_4\geq 0, \;(x_5-t)^2\leq 2\;(x_2)^{3} \;x_4\;\right \}=$$ $$=\left\{\;x\in\R^5\;:\; x_2\geq 0, \;x_4\geq 0\right\}.$$
$\vol(B_\rho(x,r) \cap E_1) < \vol(B_\rho(x,r) \cap E_2)$ and the two quantities are independent of $r$, we get that $\Theta_*(E,x)<\Theta^*(E,x)$. \footnote{More precisely, every blow-up of $E$ at $x$ must contain $E_1$ and be contained in $E_2$, therefore the density ratios of $E_1$ and $E_2$ at $0$ are respectively $\Theta_*(E,x)$ and $\Theta^*(E,x)$.}
\end{proof}

\begin{rem}
As a consequence of the rectifiability result in Section~\ref{rectifiabilityStep4}, one obtains that the behaviour in Theorem~\ref{prop:non_unique_tangents}, for a set with constant horizontal normal in $\mathbb{F}_{23}$, can only arise for $x$ belonging to a set of vanishing $D\uno_E$-measure, which, in view of \cite{Amb02},   has vanishing $\mathcal{H}^9$-measure.
\end{rem}

\begin{remark}
It is not true that every blow-up for a constant-normal set is a cone. Indeed, we may modify the above example by choosing $Z$ as follows. Pick an increasing sequence of natural numbers $n_1, n_2, n_3, ...$ with the property that $n_{j+1} > n_j^3$. Define

$$Z= \{0\} \cup \left(\bigcup_{j=1}^\infty \left(\frac{1}{n_j^2}, \frac{1}{n_j}\right]\right) \cup \{0\} \cup \left(\bigcup_{j=1}^\infty \left[-\frac{1}{n_j},-\frac{1}{n_j^2} \right)\right).$$
Choose $R_\ell=\sqrt[3]{\frac{1}{n_\ell}}$. When we dilate by $R_\ell$ we have that $Z_{R_\ell}$ contains $ \left[-1, -\frac{1}{n_\ell}\right) \cup \{0\} \cup \left(\frac{1}{n_\ell}, 1\right] $ and does not intersect $ \left(-n_\ell, -1\right)  \cup \left(1, n_\ell \right)$. This implies that, with this choice of $Z$, the blow-up of $E:= \bigcup_{t\in Z}L_{\exp(t X_5)}(\mathcal C)$ for the sequence of dilations $\delta_{R_{\ell}}$ is $\bigcup_{t\in [-1,1]} (\mathcal C+t e_5)$, which is not a cone.
\end{remark}

\section{Intrinsic rectifiability in step at most 4}
\label{rectifiabilityStep4}
\begin{definition}[Vertical half-spaces]
	We say that a measurable set $ H  $ of a Carnot group $\G $ is a vertical half-space if 
	it has constant normal and it is $Z$-monotone for all $Z\in[{\rm Lie}(G),{\rm Lie}(G)]$. 
\end{definition}

\begin{theorem}[De Giorgi, FSSC]
Let $E\subset \G$ be a finite-perimeter set in a Carnot group. 
If for $D\uno_E$-almost every $x\in \G$ we have that  every tangent $F$  of $E$ at $x$ is a vertical half-space, then $E$ is intrinsically rectifiable, in the sense of \cite{fssc}.
\end{theorem}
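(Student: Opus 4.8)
The plan is to run De Giorgi's blow-up and covering scheme in the intrinsic setting, following Franchi, Serapioni, and Serra Cassano. Recall that, in their sense, $E$ is \emph{intrinsically rectifiable} if $\partial_{\rm DG}E$ is covered, up to an $S^{Q-1}$-negligible set (equivalently, up to a $D\uno_E$-null set), by countably many intrinsic $C^1$ hypersurfaces, that is, level sets of maps with continuous and non-vanishing horizontal gradient. The overall strategy is to produce, at $D\uno_E$-almost every boundary point, an intrinsic approximate tangent hyperplane, and then to patch these local descriptions together. For the first step, I would equip $\G$ with a Carnot distance and a Haar measure, so that $(\G,\rho,\vol)$ is locally doubling and supports a Poincar\'e inequality; by the compactness and lower-semicontinuity theory for sets of finite perimeter in this setting (see \cite{Amb02}), at $D\uno_E$-almost every $x\in\partial_{\rm DG}E$ the intrinsic rescalings $\delta_{1/r}(x^{-1}E)$ subconverge, as $r\to 0^+$, in $L^1_{\rm loc}$ to non-trivial sets of locally finite perimeter that are cones in the sense of \eqref{cone_def}. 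By hypothesis, every such tangent $F$ is a vertical half-space.

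The second step is to extract the geometric meaning of the tangent. A vertical half-space $F$ is, by definition, $Z$-monotone for every $Z\in[{\rm Lie}(G),{\rm Lie}(G)]$; since the derived algebra is a linear space, this forces $Z\uno_F=0$ for all such $Z$, so that $F$ is invariant under the vertical subgroup $\exp([{\rm Lie}(G),{\rm Lie}(G)])$. Combined with the constant horizontal normal $X=\nu_E(x)$, this identifies $\partial F$ with the vertical hyperplane $\mathbb{W}_x:=\{\,y:\langle\pi(y),X\rangle=0\,\}$, where $\pi$ denotes the projection onto $V_1$; this $\mathbb{W}_x$ is a codimension-one homogeneous (hence normal) subgroup. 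In this way, at $x$ the boundary $\partial E$ acquires a well-defined intrinsic approximate tangent hyperplane, namely the left coset $x\,\mathbb{W}_x$.

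The third step upgrades this pointwise information to a covering by intrinsic regular surfaces. Because the blow-up is a half-space at $D\uno_E$-almost every point, the intrinsic normal $\nu_E$ is approximately continuous $D\uno_E$-a.e., and at such points $\partial E$ is, at every small scale, uniformly squeezed between translates of $\mathbb{W}_x$. A Whitney/Lusin-type argument, together with the intrinsic implicit function theorem of \cite{fssc}, then patches these one-sided approximations into countably many intrinsic Lipschitz (in fact intrinsic $C^1$) graphs covering $D\uno_E$-almost all of $\partial_{\rm DG}E$. This is precisely the content of the FSSC rectifiability criterion, so the cleanest route is to verify that our hypotheses match those of \cite{fssc} and then invoke it directly.

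The main obstacle is this last step: the passage from the infinitesimal statement \emph{the blow-up is a vertical half-space} to an actual covering by intrinsic regular hypersurfaces is the genuine analytic heart of the theorem. Unlike the Euclidean case, one cannot appeal directly to the area formula for Lipschitz graphs; one must instead rely on the FSSC intrinsic machinery (intrinsic cones, intrinsic differentiability, and the implicit function theorem for level sets of maps with non-vanishing horizontal gradient), which is exactly where the higher-step difficulties are absorbed.
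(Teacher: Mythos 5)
The paper does not prove this statement at all: it is quoted as a known result, attributed to De Giorgi and to Franchi--Serapioni--Serra Cassano, and used as a black box in Section~\ref{rectifiabilityStep4}. Your outline correctly reconstructs the standard route (compactness of blow-ups for finite-perimeter sets, identification of the tangent as a half-space bounded by a coset of the vertical hyperplane $\mathbb{W}_x$, then the FSSC intrinsic implicit function theorem and covering argument) and, like the paper, ultimately defers the analytic core to \cite{fssc}, so it is consistent with the paper's treatment.
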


\begin{proposition}
	Let $\G$ be a Carnot group and let $E\subseteq G$ be a set with constant   normal. 
	If the  step of $\G$ is at most $4$, then  
	 for $D\uno_E$-almost every $x\in \G$ we have that  every tangent $F$  of $E$ at $x$ is a vertical half-space. Consequently, the set $E$ is intrinsically rectifiable.
\end{proposition}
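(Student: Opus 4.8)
The plan is to invoke the cited theorem (De Giorgi, FSSC) and thereby reduce the statement to the claim that, for $D\uno_E$-almost every $x$, every tangent $F$ of $E$ at $x$ is a vertical half-space. I would first record two preliminary reductions. A blow-up is $F=\lim_{r\to 0}\delta_{1/r}(x^{-1}E)$ in $L^1_{\mathrm{loc}}$; since left translation carries the $S_W$-cone property to $x^{-1}E$ and each $\delta_{1/r}$ fixes the cone $S_W$ (Remark~\ref{rmk:semigroup:cone}), the dilated sets, and hence their limit $F$, still have the $S_W$-cone property, so by Corollary~\ref{monotone_subgroup_Carnot} the tangent $F$ is again a precisely constant-normal set with the same horizontal normal $X$ and half-space $W$. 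Moreover, by standard tangent-measure theory applied to the codimension-one Ahlfors-regular measure $|D\uno_E|$ (whose density bounds come from Proposition~\ref{prop:densities}), and using that tangents of tangents are tangents, at $D\uno_E$-a.e. $x$ the tangent $F$ may be taken to be a \emph{cone}, i.e. $\delta_t F=F$ for all $t>0$.

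Next I would extract the automatic invariances of such a tangent cone. Passing to the Carnot--Lebesgue representative, $F$ has the $\Cl(S_W)$-cone property by Theorem~\ref{main:thm2}; hence for every $Y$ in the edge $\mathfrak w_{\bar S_W}\cap(-\mathfrak w_{\bar S_W})$ of its wedge one has $\exp(\R Y)\subseteq\bar S_W$ (by Definition~\ref{def:wedge}), so $F\exp(tY)\subseteq F$ for both signs of $t$ and $F$ is \emph{invariant} under $\exp(\R Y)$. This edge contains the horizontal hyperplane $W\cap(-W)=X^\perp\cap V_1$, and by Proposition~\ref{prop more elements in wedge} together with the $e^{\ad}$-invariance of the wedge it also contains every iterated bracket that is reachable in $\bar S_W$ in both directions. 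The only brackets in $[\mathrm{Lie}(\G),\mathrm{Lie}(\G)]$ that can fail to lie in this edge are the one-sided directions in the top layer $V_s$ (for instance $X_5=[[X_2,X_1],X_2]$ in $\mathbb F_{23}$), along which $F$ is a priori only monotone, not invariant.

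The crux is to upgrade this top-layer monotonicity to invariance at $D\uno_E$-a.e. point, and then to induct on the step. This is exactly where $\mathrm{step}(\G)\le 4$ enters: in this range Conjecture~\ref{conj_intersect} holds, so there is $Z\in\mathfrak w_{\bar S_W}\cap\Int(S_W)$, and the proposition that follows that conjecture represents $F$ as the upper-graph, in the $Z$-direction, of an intrinsic-Lipschitz function $\psi$; since $F$ is a cone, $\psi$ is homogeneous. I would then show that at $D\uno_E$-a.e. $x$ a further blow-up flattens $\psi$ in the $V_s$-variable, so that $F$ becomes invariant under the central top layer $V_s$. Quotienting by the central ideal $\exp(V_s)$, the tangent $F$ descends to a constant-normal cone $\bar F$ in the step-$(s{-}1)$ Carnot group $\G/\exp(V_s)$; because dilations, translations, and hence blow-ups commute with this projection, and a vertical half-space pulls back to a vertical half-space, an induction on the step (base case $s\le 2$, which is FSSC, together with the rigidity that a constant-normal \emph{cone} whose almost-every tangent is a vertical half-space must itself be one) yields that $\bar F$, and therefore $F$, is a vertical half-space. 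I expect the genuine difficulty to be precisely this almost-everywhere flattening in $V_s$: the passage from one-sided $X$-monotonicity in the top layer to two-sided invariance for almost every tangent is exactly the phenomenon controlled by Conjecture~\ref{conj_intersect}, which is verified for $\mathrm{step}(\G)\le 4$ and open for $\mathrm{step}(\G)\ge 5$.
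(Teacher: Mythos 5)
There is a genuine gap, and it concerns precisely the point you yourself flag as the crux. Your argument routes the hypothesis $\mathrm{step}(\G)\le 4$ through Conjecture~\ref{conj_intersect}, asserting that ``in this range the conjecture holds.'' The paper does not prove this: the conjecture is verified only in $\mathbb F_{23}$ by explicit computation, and Remark~\ref{rem:evidence} gives a computation in the single group $\mathbb F_{24}$, described merely as ``evidence'' for step $\le 4$; the general case is explicitly left open. Moreover, even granting the conjecture, the step you describe as the real difficulty --- upgrading one-sided monotonicity in the top layer to two-sided invariance for $D\uno_E$-a.e.\ tangent by ``flattening'' an intrinsic graphing function under a further blow-up --- is not carried out, and the iterated-blow-up scheme only naturally controls \emph{some} tangent at a.e.\ point, whereas the statement requires \emph{every} tangent. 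The induction on step via the quotient $\G/\exp(V_s)$ and the claimed rigidity of constant-normal cones are likewise unproven assertions, so the proposal does not close.

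The paper's actual mechanism is entirely different and purely algebraic. Writing $X_1$ for the monotone horizontal direction and $X_2,\dots,X_m$ for invariant horizontal directions, one uses \cite[Proposition 4.7(ii)]{AKL}: the vector $\Ad_{\exp(tX_j)}X_1=e^{\ad_{tX_j}}X_1$ is again a monotone direction for $E$, and in step $\le 4$ this series truncates after the $t^3$ term. Dividing by the top power of $t$ and sending $t\to\pm\infty$ (a limit of monotone directions is monotone, and if both $\pm Y$ are monotone then $Y$ is invariant) peels off the layers: $[X_j,[X_j,[X_j,X_1]]]$ is invariant for $E$, hence $[X_j,[X_j,X_1]]$ is \emph{monotone} for $E$. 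The decisive input is then \cite[Lemma 5.8]{AKL}, which says that a monotone direction lying in a higher layer becomes an \emph{invariant} direction for every tangent $F$ at $D\uno_E$-a.e.\ point; iterating the same leading-coefficient argument for $F$ yields that all of $V_2$, then $V_3$ (using $[Z,[Z,X_i]]=0$ for $Z\in V_2$ in step $\le4$), then $V_4$ consists of invariant directions for $F$, so $F$ is a vertical half-space. The step restriction thus enters through the truncation of $e^{\ad}$ at degree $3$, not through Conjecture~\ref{conj_intersect}, and no graph representation, cone reduction, or quotient induction is needed.
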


\begin{proof}
In this proof we use the following terminology: for all vectors $X$ in the Lie algebra of $\G$ we say that $X$ is a monotone direction for $E$ if $X\uno_{E}\geq 0$; we say that 
$X$ is an invariant direction for $E$ if $X\uno_{E}= 0$.
%Let $E\subseteq G$ be a set with constant   normal. 
We fix a basis  $X_1, X_2,\ldots, X_m$  of the first layer $V_1$ of $\G$ such that 
$X_1 $ is a monotone direction and $  X_2,\ldots, X_m$ are invariant directions.

Since for all $t\in \R$ and all $j=2,\ldots, m$ the vector $tX_j $ is an invariant direction (for $E$) and $X_1$ is a monotone direction, then by \cite[Proposition 4.7.(ii)]{AKL}, we have that   the vector
\begin{equation}
\label{estate2019}
{\rm Ad}_{\exp(tX_j)}X_1 = e^{{\rm ad}_{tX_j}}X_1=
 X_1 + t[X_j,X_1] +\frac{t^2}{2} [X_j,[X_j,X_1] ] +\frac{t^3}{3} [X_j,[X_j,[X_j,X_1] ] ]\end{equation}
  is a monotone direction, 
where we have used that  the step of  $\G$ is  at most 4.
Dividing \eqref{estate2019} by $t^3$ and letting $t\to\pm\infty$, we deduce that 
$[X_j,[X_j,[X_j,X_1] ] ]$ is an invariant direction. 
Therefore, going back to \eqref{estate2019} we have that
$$ X_1 + t[X_j,X_1] +\frac{t^2}{2} [X_j,[X_j,X_1] ]  ,$$
 is a monotone direction for all $t$.
Dividing by $t^2$ and letting $t\to+\infty$, we deduce that 
 $[X_j,[X_j,X_1] ] $  is a monotone direction for $E$.
 
 By  \cite[Lemma 5.8]{AKL}, we have that for   $D\uno_E$-almost every $x\in \G$ and for every tangent $F$ of $E$ at $x$ the vector 
  $[X_j,[X_j,X_1] ] $  is an invariant direction for $F$, for all $j=2,\ldots, m$. Recall that moreover, every such $F$ has the same constant normal as $E$.
  Therefore, similarly as in \eqref{estate2019} we then get that
  $$ X_1 + t[X_j,X_1] , \qquad \forall t\in \R, $$
 is a monotone direction for $F$. 
   Letting $t\to\pm\infty$, we get that 
    $[X_j,X_1] $ is an invariant direction. 
    In addition, recall that every element $[X_j,X_k] $, for $j,k=2,\ldots, m$,    is an invariant direction (since invariant directions for a subalgebra, see  \cite[Proposition 4.7.(i)]{AKL}).
  We therefore found a  set of of invariant directions that span the second layer $V_2$. Hence, we have 
  $$Z \uno_{F}= 0, \qquad \forall Z\in V_2.$$
  
 Fix $Z\in V_2$ and $i=1,\ldots, m$. Then we have 
  $Z \uno_{F}= 0$,  $X_i \uno_{F}\geq 0$, and 
  $[Z,[Z,X_i]]=0$, since we are in step at most 4.
Consequently,   from  \cite[Proposition 4.7]{AKL} we have that for all $t\in \R$ the vector
$$ X_i + t[Z,X_i]   $$
 is a monotone direction for $F$.
    Letting $t\to\pm\infty$, we get that 
    $[Z,X_i] $ is an invariant direction.  Since such vectors span $V_3$ we get that 
    $$W \uno_{F}= 0, \qquad \forall W\in V_3.$$
    
    Similarly, for $W\in V_3$ and $i=1,\ldots, m$ we get that 
    $ X_i + t[W,X_i]   $
 is a monotone direction for $F$ for all $t$, and so is $[W,X_i]$. Thus every vector in $V_4$ is an invariant direction for $F$. In conclusion, the set $F$ is a half-space.
\end{proof}

   \bibliography{general_bibliography}
\bibliographystyle{amsalpha}

\end{document}